\documentclass[american,10pt]{amsart}
\pdfoutput=1
\usepackage[T1]{fontenc}
\usepackage[utf8]{inputenc}
\usepackage{lmodern}
\usepackage{enumitem}
\usepackage{mathtools}
\usepackage{amssymb}
\usepackage{bm}
\usepackage[unicode=true,pdfusetitle,
 bookmarks=true,bookmarksnumbered=false,bookmarksopen=false,
 breaklinks=false,pdfborder={0 0 1},backref=false,colorlinks=false]{hyperref}
\usepackage[foot]{amsaddr}
\usepackage{cite}

\theoremstyle{plain}
\newtheorem{thm}{Theorem}[section]
\newtheorem{lem}[thm]{Lemma}
\newtheorem{prop}[thm]{Proposition}

\newtheorem{cor}[thm]{Corollary}

\theoremstyle{definition}

\theoremstyle{remark}
\newtheorem{rem}{Remark}[section]
\newtheorem*{rem*}{Remark}

\newcommand\R{\mathbb{R}}
\newcommand\Sn{\mathbb{S}^{n-1}}
\newcommand\N{\mathbb{N}}
\newcommand\Z{\mathbb{Z}}
\DeclareMathOperator{\vspan}{span}
\DeclareMathOperator{\diag}{diag}
\newcommand\upd{\textup{d}}
\newcommand\upT{\textup{T}}
\newcommand\upp{\textup{per}}
\newcommand\upe{\textup{e}}
\newcommand\upDir{\textup{Dir}}

\newcommand\upini{\textup{ini}}
\newcommand\upFG{\textup{FG}}

\newcommand{\vect}[1]{\bm{\mathbf{#1}}} 
\newcommand\veL{\vect{L}} 
\newcommand\veC{\vect{C}} 
\newcommand\veu{\vect{u}} 
\newcommand\vev{\vect{v}} 
\newcommand\vew{\vect{w}} 
\newcommand\vez{\vect{0}} 
\newcommand\veo{\vect{1}} 
\newcommand\vei{\vect{\infty}} 
\newcommand\caP{\mathcal{P}} 
\newcommand\caC{\mathcal{C}} 
\newcommand\cbQ{\vect{\mathcal{Q}}} 
\newcommand\dcbP{\diag(\vect{\caP})} 
\newcommand\clOmper{\overline{\Omega_\upp}} 
\newcommand\FPH{F\"{o}ldes--Pol\'{a}\v{c}ik's Harnack } 

\usepackage{todonotes}
\newcommand{\red}[1]{#1}

\title[Long-time properties of KPP systems in space-time periodic media]{Persistence, extinction and spreading properties of non-cooperative Fisher--KPP systems in space-time periodic media}
\author{L\'{e}o Girardin}
\address[L. G.]{CNRS, Institut Camille Jordan, Universit\'{e} Claude Bernard Lyon-1, 43 boulevard du 11 novembre 1918, 69622 Villeurbanne Cedex, France}
\email{leo.girardin@math.cnrs.fr}

\begin{document}
\begin{abstract}
    This paper is concerned with asymptotic persistence, extinction and spreading properties for 
    non-cooperative Fisher--KPP systems with space-time periodic coefficients. Results are formulated in terms of
    a family of generalized principal eigenvalues associated with the linearized problem. When the maximal generalized 
    principal eigenvalue is negative, all solutions to the Cauchy problem become locally uniformly positive 
    in long-time, at least one space-time periodic uniformly positive entire solution exists, and solutions
    with compactly supported initial condition asymptotically spread in space at a speed given by a Freidlin--G\"{a}rtner-type formula.
    When another, possibly smaller, generalized principal eigenvalue is nonnegative, then on the 
    contrary all solutions to the Cauchy problem vanish uniformly and the zero solution is the unique 
    space-time periodic nonnegative entire solution. When the two generalized principal eigenvalues 
    differ and zero is in between, the long-time behavior depends on the decay at infinity of the initial condition. 
    \red{The proofs rely upon double-sided controls by solutions of cooperative systems. The control from below is new 
    for such systems and makes it possible to shorten the proofs and extend the generality of the system simultaneously.}
\end{abstract}

\keywords{KPP nonlinearities, space-time periodicity, reaction--diffusion system}
\subjclass[2010]{35K40, 35K57, 92D25.}
\maketitle

\section{Introduction}
This paper is concerned with reaction--diffusion systems of the form 
\begin{equation}\label{sys:KPP}\tag{KPP}
    \dcbP\veu =\veL\veu-\veC\veu\circ\veu,
\end{equation}
where: $\veu:(t_0,+\infty)\times\R^n\to\R^N$ is a vector-valued function of size $N\in\N^\star$, with a time variable 
$t\in(t_0,+\infty)\subset\R$ and a
space variable $x\in\R^n$, $t_0\in\overline{\R}$ and $n\in\N^\star$ being respectively the initial time and the spatial
dimension; each operator of the family $\vect{\caP}=(\caP_i)_{i\in[N]}$, with $[N]=\N\cap[1,N]$, 
has the form
\[
    \caP_i:u\mapsto\partial_t u -\nabla\cdot\left(A_i\nabla u\right)+q_i\cdot\nabla u,
\]
with $A_i:\R\times\R^n\to\R^{n\times n}$ and $q_i:\R\times\R^n\to\R^n$ periodic functions of 
$(t,x)$, respectively square matrix-valued and vector-valued;
$\veL,\veC:\R\times\R^n\to\R^{N\times N}$ are square matrix-valued periodic functions of
$(t,x)$; and $\circ$ denotes the Hadamard product between two vectors in $\R^N$.

We will study both nonnegative entire solutions of \eqref{sys:KPP}\red{, namely solutions defined in
$\R\times\R^n$ initiated at $t_0=-\infty$,} and solutions of the Cauchy problem initiated at $t_0=0$ and supplemented
with bounded nonnegative continuous initial conditions,
\begin{equation}
    \veu(0,x)=\veu_{\upini}(x)\quad\text{for all }x\in\R^n,\ \veu_{\upini}\in\caC_b(\R^n,\R^N),\ \veu_{\upini}\geq\vez.
    \label{IC}\tag{IC}
\end{equation}

The standing assumptions on $\vect{\caP}$, $\veL$ and $\veC$ are the following.
\begin{enumerate}[label=$({\mathsf{A}}_{\arabic*})$]
    \item \label{ass:ellipticity} The family $(A_i)_{i\in[N]}$ is \textit{uniformly elliptic}:
    \[
        0<\min_{i\in[N]}\min_{y\in\Sn}\min_{(t,x)\in\R\times\R^n}\left(y\cdot A_i(t,x)y\right).
    \]
    \item \label{ass:cooperative} The matrix $\underline{\veL}\in\R^{N\times N}$, whose entries are 
    \[
        \underline{l}_{i,j}=\min_{(t,x)\in\R\times\R^n}l_{i,j}(t,x)\quad\text{for all }(i,j)\in[N]^2,
    \]
    is \textit{essentially nonnegative}: its off-diagonal entries are nonnegative.
    \item \label{ass:irreducible} The matrix $\overline{\veL}\in\R^{N\times N}$, whose entries are 
    \[
        \overline{l}_{i,j}=\max_{(t,x)\in\R\times\R^n}l_{i,j}(t,x)\quad\text{for all }(i,j)\in[N]^2,
    \]
    is \textit{irreducible}: it does not have a stable subspace of the form
    $\vspan(\vect{e}_{i_1},\dots,\vect{e}_{i_k})$, where $k\in[N-1]$, $i_1,\dots,i_k\in[N]$ and $\vect{e}_i=(\delta_{ij})_{j\in[N]}$.
    By convention, $[0]=\emptyset$ and $1\times 1$ matrices are irreducible, even if zero.
    \item \label{ass:KPP} The matrix $\underline{\veC}\in\R^{N\times N}$, whose entries are 
    \[
        \underline{c}_{i,j}=\min_{(t,x)\in\R\times\R^n}c_{i,j}(t,x)\quad\text{for all }(i,j)\in[N]^2,
    \]
    is \textit{positive}: its entries are positive.
    \item \label{ass:smooth_periodic} There exists $\delta\in(0,1)$ such that
	$\veL,\veC\in\caC^{\delta/2,\delta}_\upp(\R\times\R^n,\R^{N\times N})$ and, 
	for each $i\in[N]$, $A_i\in\caC^{\delta/2,1+\delta}_\upp(\R\times\R^n,\R^{n\times n})$ and
    $q_i\in\caC^{\delta/2,\delta}_\upp(\R\times\R^n,\R^n)$. 
    Moreover, $A_i=A_i^\upT$ for each $i\in[N]$.
\end{enumerate}
The precise definition of the functional spaces appearing in \ref{ass:smooth_periodic} will
be clarified below, if not clear already. As usual in such a smooth and 
generic framework, the symmetry of the diffusion matrices in \ref{ass:smooth_periodic} is actually given for free.
\red{No symmetry assumption is made on $\veL$ and the irreducibility of $\overline{\veL}$ in \ref{ass:irreducible}
is equivalent to the irreducibility of the space-time average of $\veL$.}

\red{We are interested in long-time properties of solutions: persistence, extinction and propagation.
These have been studied before in the following special cases:
\begin{itemize}
    \item $N=1$, \textit{i.e.} the system reduces to a scalar equation of KPP type \cite{Nadin_2010,Berestycki_Hamel_Nadin};
    \item $N=2$, $n=1$, no advection and space-time homogeneous coefficients \cite{Griette_Raoul,Morris_Borger_Crooks};
    \item $n=1$, no advection and space-time homogeneous coefficients \cite{Girardin_2016_2};
    \item no advection, space-time homogeneous coefficients and essentially positive $\veL$ (instead of
	essentially nonnegative) \cite{Barles_Evans_S};
    \item $N=2$, $n=1$, $A_1=A_2=1$, no advection, space periodic coefficients and pointwise essentially positive $\veL$ \cite{Alfaro_Griette,Roques_Boutillon_Zamberletti_Papaix_2023};
    \item $N=2$, $n=1$, space periodic coefficients and pointwise essentially positive $\veL$ \cite{Griette_Matano_2021}.
\end{itemize}

The usual tool when studying long-time behaviors in reaction--diffusion models is the \textit{comparison principle}, 
namely the property that ordered initial conditions yield perpetually ordered solutions.
However, due to the positivity of $\veC$ (\textit{cf.} \ref{ass:KPP}), systems of the form \eqref{sys:KPP}
satisfy the comparison principle if and only if $N=1$. When $N\geq 2$, $\veL$ and $\veC$ can be chosen
in such a way that a comparison principle holds true in a neighborhood of $\vez$ or away from it 
\cite[Proposition 2.5]{Cantrell_Cosner_Yu_2018}, but such choices are not generic and never lead to a global comparison principle.
Systems that do not satisfy the comparison principle are referred to as \textit{non-monotone} or \textit{non-cooperative}.
In this paper, we present a unifying approach for non-cooperative KPP systems of arbitrary size $N\in\N^\star$, in 
arbitrary spatial dimension $n\in\N^\star$, with space-time periodic coefficients, with advection
and with minimal positivity requirements on $\veL$. 
To the best of our knowledge, this is the first time non-cooperative KPP systems
are studied at such a degree of generality. The techniques we bring together and develop are robust. They exploit
recent principal spectral theory results as well as the deep connection between non-cooperative KPP systems, KPP equations
and cooperative systems. Consequently, they could be deployed in other extensions of problems first stated for scalar KPP
equations, for instance the impact of different space-time heterogeneities \cite{Berestycki_Hamel_Nadin} or the logarithmic
delay of spreading fronts emanating from compactly supported initial conditions \cite{Hamel_Nolen_Ro}.}

For brevity, we will denote from now on 
\begin{equation*}
    \cbQ=\dcbP-\veL
\end{equation*}
the linear operator derived from the linearization of \eqref{sys:KPP} at $\veu=\vez$. By virtue of
the assumptions \ref{ass:cooperative}--\ref{ass:irreducible}, this linear operator is cooperative and fully coupled, and 
this will be a key property in the forthcoming analysis of the non-cooperative semilinear system \eqref{sys:KPP}.
\red{On the contrary, the nonlinear remainder of the reaction term plays a secondary role, and results can be generalized with 
minor technical adaptations to reaction terms of the form $\veL(t,x)\veu-\vect{f}\left(t,x,\veu\right)\circ\veu$
satisfying the following assumption of KPP type:
\begin{equation*}
    \forall(t,x)\in\R\times\R^n,\quad
    \begin{cases}
	\vect{f}(t,x,\vez)=\vez, \\
	\vect{f}(t,x,\vev)\geq\vez\quad\text{if }\vev\geq\vez.
    \end{cases}
\end{equation*}
Such a generalization can be found in our previous paper \cite{Girardin_2016_2}.
The motivation of the restriction in the present paper is twofold: on one hand, the form $\veL\veu-\veC\veu\circ\veu$ 
is sufficient for the applications we have in mind; on the other hand, it minimizes the verbosity and highlights
the role of the linear part.}

\subsection{Organization of the paper}
The remainder of Section 1 is devoted to a detailed introduction \red{and to the statement of the main results}. 
Section 2 \red{contains technical preliminaries. Section 3} contains the proofs.

\subsection{\red{Motivations}}
\red{Systems of the form \eqref{sys:KPP} arise naturally in population dynamics modeling when the population
under consideration has to be divided into discrete classes (age classes, phenotypic classes, size classes, etc.).}
Extensive \red{references and} discussions on these models can be
found in \cite{Girardin_2016_2,Girardin_Mazari_2022}. Here we only suggest briefly one example of application.

Elliott--Cornell \cite{Elliott_Cornel} introduced for modeling purposes the following model:
\begin{equation*}
    \begin{cases}
	\partial_t n_e = D_e\partial_{xx}n_e + r_e n_e(1-m_{ee}n_e -m_{ed}n_d)+\mu_d n_d-\mu_e n_e, \\
	\partial_t n_d = D_d\partial_{xx}n_d + r_d n_d(1-m_{de}n_e -m_{dd}n_d)+\mu_e n_e-\mu_d n_d.
    \end{cases}
\end{equation*}
This system was conceived as an eco-evolutionnary model for spatio-temporal \red{one-dimensional}
dynamics of a population with two phenotypes,
or morphs. Each morph $i\in\{e,d\}$ has a dispersal rate $D_i$, a growth rate $r_i$, is subjected to Lotka--Volterra competitive 
dynamics with intermorph and intramorph competition rates $m_{ij}$, and mutates into the other morph at rate $\mu_i$. 
All coefficients are space-time constants. The establisher morph $e$ is specialized
in growth, \textit{i.e.} $r_e>r_d$, whereas the disperser morph $d$ is specialized in dispersal, \textit{i.e.} $D_d>D_e$.

The role of each morph during a population invasion was investigated, heuristically in \cite{Elliott_Cornel} and
subsequently rigorously in \cite{Morris_Borger_Crooks}.

The system above is a special case of \eqref{sys:KPP}.
\red{Our results make it possible to study several natural extensions of the model. 
Spatial dimensions larger than $1$ are natural for applications, in particular $n=2$ or $n=3$ for population dynamics.
Numbers of morphs larger than $N=2$ can help to model more precisely the phenotypic landscape, or even to discretize
a continuous model.}
Advection terms can be added to model, \textit{e.g.}, directional wind.
Temporal periodicity of the coefficients can be added to model, depending on the timescale, seasonality or nychthemeral rhythms.
Spatial periodicity of the coefficients can be added to model biological invasions in periodic landscapes, \textit{e.g.}, vineyards.
More generally, spatiotemporal periodicity is a way of introducing environmental heterogeneity while keeping strong 
mathematical tractability. \red{Finally, since the assumption \ref{ass:irreducible} does not require pointwise
space-time irreducibility but only irreducibility on average, it is also possible to analyze models where, \textit{e.g.},
mutations occur only during short outbursts triggered by time periodic events that are out of phase, in such a way 
that the time periodic functions $\mu_e,\mu_d\geq 0$ satisfy $\mu_e\neq0$, $\mu_d\neq 0$, $\mu_e\mu_d=0$.}

\subsection{\red{Notations}}

Generally speaking, notations are chosen consistently with our previous paper on space-time
homogeneous coefficients \cite{Girardin_2016_2} and with our paper with I. Mazari on the principal
spectral analysis of $\cbQ$ \cite{Girardin_Mazari_2022}.

In the whole paper, $\N$ is the set of nonnegative integers, which contains $0$.

We fix once and for all $n+1$ positive numbers $T, L_1, \dots, L_n\in \R_+^\star$. For the sake of brevity, we use the notations
$L=(L_1,\dots,L_n)$, $(0,L)=(0,L_1)\times\dots\times(0,L_n)$ and $|[0,L]|=\prod_{\alpha=1}^n L_\alpha$. Unless specified otherwise, 
time and space periodicities refer to, respectively, $T$-periodicity with respect to $t$ and $L_\alpha$-periodicity with respect to 
$x_\alpha$ for each $\alpha\in[n]$ (or $L$-periodicity with respect to $x$ for short). The space-time periodicity cell $(0,T)\times(0,L)$ 
is denoted $\Omega_\upp$ and its volume is $T|[0,L]|$.

Vectors in $\R^N$ and matrices in $\R^{N\times N}$ are denoted in bold font. 
Functional operators are denoted in calligraphic typeface (bold if they act on functions valued in $\R^N$).
Functional spaces, \textit{e.g.} $\mathcal{W}^{1,\infty}(\R\times\R^n,\R^N)$, 
are also denoted in calligraphic typeface. A functional space $\mathcal{X}$ denoted with a subscript $\mathcal{X}_\upp$, 
$\mathcal{X}_{t-\upp}$ or $\mathcal{X}_{x-\upp}$ is restricted to functions that are space-time periodic, time periodic or space periodic respectively.

For clarity, H\"{o}lder spaces of functions with $k\in\mathbb{N}$ derivatives that are all
H\"{o}lder-continuous with exponent $\alpha\in(0,1)$ are denoted $\caC^{k+\alpha}$; when the domain is $\R\times\R^n$, it 
should be unambiguously understood that $\caC^{k+\alpha,k'+\alpha'}$ denotes the set of functions that have $k$ 
$\alpha$-H\"{o}lder-continuous derivatives in time and $k'$ $\alpha'$-H\"{o}lder-continuous derivatives in space.

For any two vectors $\veu,\vev\in\R^N$, $\veu\leq\vev$ means $u_i\leq v_i$ for all 
$i\in[N]$, $\veu<\vev$ means $\veu\leq\vev$ together with $\veu\neq\vev$ and $\veu\ll\vev$ means $u_i<v_i$ for all $i\in[N]$. If 
$\veu\geq\vez$, we refer to $\veu$ as \textit{nonnegative}; if $\veu>\vez$, as \textit{nonnegative nonzero}; if $\veu\gg\vez$, as 
\textit{positive}. The sets of all nonnegative, nonnegative nonzero, positive vectors are respectively denoted $[\vez,\vei)$,
$[\vez,\vei)\backslash\{\vez\}$ and $(\vez,\vei)$. The vector whose entries are all equal to $1$ is denoted $\veo$ and this never refers to
an indicator function.
Similar notations and terminologies might be used in other dimensions and for matrices. The identity matrix is denoted $\vect{I}$.

Similarly, a function can be nonnegative, nonnegative nonzero, positive. For clarity, a positive function is a function with only positive values.

To avoid confusion between operations in the state space $\R^N$ and operations in the spatial domain $\R^n$, 
Latin indexes $i,j,k$ are assigned to vectors and matrices of size $N$ whereas Greek indexes $\alpha,\beta,\gamma$ 
are assigned to vectors and matrices of size $n$. 
We use mostly subscripts to avoid confusion with algebraic powers, but when both Latin and Greek indexes are involved, we 
move the Latin ones to a superscript position, \textit{e.g.} $A^i_{\alpha,\beta}(t,x)$.
We denote scalar products in $\R^N$ with the transpose operator, $\veu^\upT\vev=\sum_{i=1}^N u_i v_i$,
and scalar products in $\R^n$ with a dot, $x\cdot y =\sum_{\alpha=1}^n x_\alpha y_\alpha$. 

For any vector $\veu\in\R^N$, $\diag(\veu)$, $\diag(u_i)_{i\in[N]}$ or $\diag(u_i)$ for short refer to the diagonal matrix in $\R^{N\times N}$
whose $i$-th diagonal entry is $u_i$. These notations can also be used if $\veu$ is a function valued in $\R^N$.

Finite dimensional Euclidean norms are denoted $|\cdot |$ whereas the notation $\|\cdot \|$ is reserved for norms in functional spaces.

The notation $\circ$ is reserved in the paper for the Hadamard product (component-wise product of vectors or matrices)
and never refers to the composition of functions.
For any vector $\vev\in\R^N$ and $p\in\R$, $\vev^{\circ p}$ denotes the vector $(v_i^p)_{i\in[N]}$.

\subsection{Results}

Before stating the results, we need to introduce a family of generalized principal eigenvalues that was previously
studied in \cite{Girardin_Mazari_2022}. The family $\left(\lambda_{1,z}\right)_{z\in\R^n}$ is defined by:
\begin{equation}
    \lambda_{1,z} = \lambda_{1,\upp}\left( \upe_{-z}\cbQ\upe_z \right),
\end{equation}
where $\lambda_{1,\upp}$ denotes the periodic principal eigenvalue classically given by the Krein--Rutman theorem
and where $\upe_{\pm z}:x\in\R^n\mapsto\upe^{\pm z\cdot x}$. The operator $\upe_{-z}\cbQ\upe_z$ can be alternatively written as:
\begin{equation}
    \upe_{-z}\cbQ\upe_z = \cbQ-\diag\left(2A_i z\cdot\nabla+z\cdot A_i z+\nabla\cdot\left(A_i z\right)-q_i\cdot z\right).
\end{equation}
For any $z\in\R^n$, there exists a unique, up to multiplication by a positive constant, positive periodic principal eigenfunction 
$\veu_z\in\caC^{1,2}_{\upp}(\R\times\R^n,(\vez,\vei))$ satisfying $\cbQ(\upe_z\veu_z)=\lambda_{1,z}\upe_z\veu_z$.

Recall that $z\in\R^n\mapsto\lambda_{1,z}$ is strictly concave, coercive, with one global maximum.
We denote:
\begin{equation}
    \lambda_1=\max_{z\in\R^n}\lambda_{1,z}\quad\text{and}\quad\lambda_1'=\lambda_{1,0}.
\end{equation}
The equality $\lambda_1=\lambda_1'$ can be true or false. 
\red{The strict inequality $\lambda_1'<\lambda_1$ can be induced by, \textit{e.g.}, nonzero advection rates $q_i$ or 
spatial heterogeneities combined with asymmetries in the matrix $\veL$ \cite{Girardin_Mazari_2022,Griette_Matano_2021}.}

As was proved in \cite{Girardin_Mazari_2022}, $\lambda_1$ and $\lambda_1'$ can be alternatively defined as:
\begin{equation}
    \lambda_1 = \sup\left\{ \lambda\in\R\ |\ \exists \veu\in\caC^{1,2}_{t-\upp}(\R\times\R^n,(\vez,\vei))\ \cbQ\veu\geq\lambda\veu \right\},
\end{equation}
\begin{equation}
    \lambda_1' = \inf\left\{ \lambda\in\R\ |\ \exists \veu\in\mathcal{W}^{1,\infty}\cap\caC^{1,2}_{t-\upp}(\R\times\R^n,(\vez,\vei))\ \cbQ\veu\leq\lambda\veu \right\}.
\end{equation}

We are now in a position to state our results.

The first result states a condition for the uniform extinction of any solution of the Cauchy problem.

\begin{thm}\label{thm:extinction}
    Assume $\lambda_1'\geq 0$. 

    Then all solutions $\veu$ of the Cauchy problem \eqref{sys:KPP}--\eqref{IC} satisfy
    \begin{equation}
	\lim_{t\to+\infty}\max_{i\in[N]}\sup_{x\in\R^n}u_i (t,x)=0.
	\label{eq:extinction}
    \end{equation}
\end{thm}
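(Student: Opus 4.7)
The plan is to control $\veu$ from above by a super-solution of an auxiliary cooperative semilinear system that dominates \eqref{sys:KPP}. Since all entries of $\veC$ are positive by \ref{ass:KPP} and $\veu\geq\vez$, one has componentwise
\[
(\veC\veu\circ\veu)_i=u_i\sum_{j\in[N]}c_{ij}u_j\geq c_{ii}u_i^2,
\]
so $\veu$ is a sub-solution of the cooperative semilinear system $\dcbP\vew=\veL\vew-\diag(c_{ii})\vew\circ\vew$, whose reaction has essentially nonnegative Jacobian by \ref{ass:cooperative}. By the standard comparison principle for cooperative systems, it then suffices to construct a super-solution of this truncated system whose initial datum dominates $\veu_\upini$ and which decays to zero uniformly in $x$.

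To this end, let $\veu_0\in\caC^{1,2}_\upp(\R\times\R^n,(\vez,\vei))$ denote the space-time periodic positive principal eigenfunction associated with $\lambda_1'=\lambda_{1,0}$, normalized so that $\cbQ\veu_0=\lambda_1'\veu_0$, and look for a super-solution in the separated form $\vev(t,x)=f(t)\veu_0(t,x)$ with $f>0$. A direct computation yields $\cbQ(f\veu_0)=(f'+\lambda_1' f)\veu_0$, so the super-solution inequality $\cbQ\vev+\diag(c_{ii})\vev\circ\vev\geq\vez$ reduces, upon dividing by $u_{0,i}>0$, to
\[
f'+\lambda_1' f+c_{ii}(t,x)u_{0,i}(t,x)f^2\geq 0\quad\text{for all }i\in[N],\ (t,x)\in\R\times\R^n.
\]
Setting $\kappa:=\min_{i\in[N]}\min_{(t,x)\in\clOmper}c_{ii}(t,x)u_{0,i}(t,x)$, which is positive and finite by \ref{ass:KPP}, \ref{ass:smooth_periodic}, and the space-time periodicity together with the strict pointwise positivity of $\veu_0$, it is enough to pick $f$ satisfying the Bernoulli ODE $f'+\lambda_1' f+\kappa f^2=0$.

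This ODE linearizes via $g=1/f$ into $g'=\lambda_1' g+\kappa$, which is explicitly solvable: with $f(0)=M>0$, one obtains $f(t)=(M^{-1}+\kappa t)^{-1}$ when $\lambda_1'=0$ and exponential decay at rate $\lambda_1'$ when $\lambda_1'>0$. In both cases $f(t)\to 0$ as $t\to+\infty$, and $M$ can be chosen large enough that $M\veu_0(0,x)\geq\veu_\upini(x)$ pointwise, which is possible because $\veu_\upini$ is bounded and each component of $\veu_0(0,\cdot)$ is continuous, $L$-periodic, and strictly positive, hence bounded below away from zero. Cooperative comparison then yields $\vez\leq\veu(t,x)\leq f(t)\veu_0(t,x)$ on $(0,+\infty)\times\R^n$, and \eqref{eq:extinction} follows at once from the boundedness of $\veu_0$.

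The main difficulty I anticipate lies in the critical case $\lambda_1'=0$, where the purely linear super-solution $Me^{-\lambda_1' t}\veu_0=M\veu_0$ is bounded but fails to decay. Extracting (polynomial) decay there forces the argument to pass through the truncated cooperative semilinear system rather than the linearization $\cbQ\veu=\vez$: it is the nonlinear damping $\kappa f^2$ that drives $f$ to zero. The positivity of $\underline{\veC}$ granted by \ref{ass:KPP}, together with the uniform positive lower bound on $\veu_0$ inherited from its space-time periodicity, are both essential at this critical threshold.
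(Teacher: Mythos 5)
Your proof is correct, and it differs from the paper's in a genuine and interesting way. Both proofs start from the same basic observation that $\veu$ is a sub-solution of the cooperative truncation $\cbQ\vew=-\diag(c_{ii})\vew\circ\vew$ and compare against a super-solution proportional to the periodic principal eigenfunction $\veu_0$, but the two proofs diverge in the critical case $\lambda_1'=0$. There, the paper keeps a constant scaling $\overline{\veu}_T=M_T\veu_0$ (which does not decay in $t$) and instead proves, through a strong maximum principle and a delicate diagonal-extraction/translation argument, that the optimal constant $M_T$ is strictly decreasing in $T$ and converges to $0$. You instead choose the Bernoulli time-scaling $f(t)$ solving $f'+\lambda_1'f+\kappa f^2=0$, which exploits the uniform strict positivity of $c_{ii}u_{0,i}$ (coming from \ref{ass:KPP} and the positivity and space-time periodicity of $\veu_0$) to turn the quadratic damping into ODE decay. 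Your route is shorter, avoids the limiting argument entirely, and treats the two subcases $\lambda_1'>0$ and $\lambda_1'=0$ uniformly.

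One point worth flagging explicitly: in the critical case your super-solution decays like $1/t$, which yields $\sup_x u_i(t,x)=O(1/t)$. The paper's Remark \ref{rem:algebraic_extinction} states that the algebraic decay rate in the case $\lambda_1'=0$ is open, and cites technical obstacles from \cite{Girardin_2016_2}. Those obstacles concern the more general nonlinearity $\vect{f}(t,x,\veu)\circ\veu$ with only $\vect{f}(\vez)=\vez$, $\vect{f}\geq\vez$, where there is no guaranteed lower bound $f_i(\veu)\geq c_{ii}u_i$ with $c_{ii}$ uniformly positive; in the present paper, however, the nonlinearity is precisely $\veC\veu\circ\veu$ with $\underline{\veC}\gg\vect{0}$, and then the quadratic lower bound holds and your Bernoulli super-solution does close the argument. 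So in the setting of \ref{ass:KPP} your computation actually furnishes the conjectured rate; you should make this explicit as a byproduct of the proof rather than leave it implicit, and note that it does not extend to the generalized nonlinearities discussed in the introduction.
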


As an immediate corollary, when $\lambda_1'\geq 0$, $\vez$ is the only nonnegative bounded 
entire solution of \eqref{sys:KPP}.
This is no longer true when $\lambda_1'<0$, as stated by the following theorem.

\begin{thm}\label{thm:existence_persistent_entire_solution}
    Assume $\lambda_1'<0$. 

    Then there exists a uniformly positive space-time periodic entire solution $\veu^\star$ of \eqref{sys:KPP}.
\end{thm}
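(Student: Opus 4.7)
My plan is to construct $\veu^\star$ as a fixed point of a Poincar\'{e}-type map acting on an order interval between an explicit periodic sub-solution and a constant super-solution of \eqref{sys:KPP}. Let $\veu_0\in\caC^{1,2}_\upp(\R\times\R^n,(\vez,\vei))$ denote the space-time periodic principal eigenfunction associated with $\lambda_1'=\lambda_{1,0}<0$, normalized once and for all. I would first choose $\varepsilon\in(0,1)$ small enough that
\[
    \dcbP(\varepsilon\veu_0)-\veL(\varepsilon\veu_0)+\veC(\varepsilon\veu_0)\circ(\varepsilon\veu_0)=\varepsilon\lambda_1'\veu_0+\varepsilon^2\veC\veu_0\circ\veu_0\leq\vez,
\]
so that $\varepsilon\veu_0$ is a sub-solution of \eqref{sys:KPP}; the negativity of $\lambda_1'$ together with the boundedness and strict positivity of $\veu_0$ make this possible. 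Using \ref{ass:KPP}, I would then fix $M>0$ large enough so that $M\veo$ is a super-solution, shrinking $\varepsilon$ further if needed to ensure $\varepsilon\veu_0\leq M\veo$ pointwise.

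Next, I would establish the double-sided cooperative sandwich: for the solution $\veu$ of the Cauchy problem \eqref{sys:KPP}--\eqref{IC} with $\veu_\upini=\varepsilon\veu_0(0,\cdot)$, the inequality $\varepsilon\veu_0(t,x)\leq\veu(t,x)\leq M\veo$ holds on $[0,+\infty)\times\R^n$. The upper bound is obtained via the classical observation that $\veC\veu\circ\veu\geq\diag(c_{ii})\veu\circ\veu$ componentwise (using \ref{ass:KPP}), so that $\veu$ is a sub-solution of the \emph{cooperative} logistic system $\dcbP\vew=\veL\vew-\diag(c_{ii})\vew\circ\vew$; since $M\veo$ remains a super-solution of this cooperative auxiliary system, the classical parabolic comparison principle yields $\veu\leq M\veo$. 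The lower bound---the new contribution stressed in the abstract---is obtained dually, by constructing a cooperative sub-system of \eqref{sys:KPP} of which $\veu$ is a super-solution and which still admits $\varepsilon\veu_0$ as a sub-solution, so that cooperative comparison yields $\veu\geq\varepsilon\veu_0$.

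With the sandwich in hand, I would apply Schauder's fixed point theorem to the Poincar\'{e} map $\Pi:\vev\mapsto\veu(T,\cdot)$, where $\veu$ solves \eqref{sys:KPP}--\eqref{IC} with initial datum $\vev$, acting on
\[
    \mathcal{K}=\{\vev\in\caC(\R^n,\R^N)\ :\ \vev\text{ is }L\text{-periodic and }\varepsilon\veu_0(0,\cdot)\leq\vev\leq M\veo\},
\]
which is nonempty, convex and compact in the uniform topology thanks to Arzel\`{a}--Ascoli on $\R^n/L\Z^n$ combined with parabolic Schauder regularity. The sandwich and the identity $\veu_0(T,\cdot)=\veu_0(0,\cdot)$ guarantee $\Pi(\mathcal{K})\subset\mathcal{K}$, and standard parabolic estimates give continuity of $\Pi$. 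Any fixed point $\vev^\star$ then extends by $T$-periodicity in time (and remains $L$-periodic in space by invariance of periodicity along \eqref{sys:KPP}) to the desired uniformly positive $(T,L)$-periodic entire solution $\veu^\star\geq\varepsilon\veu_0\gg\vez$.

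The hard part is the cooperative control from below. The naive bound $(\veC\veu\circ\veu)_i\leq M(\veC\veo)_i u_i$, obtained by inserting the a priori upper bound into the off-diagonal competition, produces only a \emph{linear} cooperative sub-system whose principal eigenvalue is shifted by the positive potential $M\diag(\veC\veo)$ and is typically strictly positive; its solutions then decay exponentially and $\varepsilon\veu_0$ ceases to be a sub-solution. The sub-system has therefore to be tuned more carefully---for instance by retaining the diagonal quadratic damping $-\diag(c_{ii})\vew\circ\vew$ and absorbing the off-diagonal contributions into a smaller, problem-adapted perturbation of $\veL$---so that its principal eigenvalue stays on the correct side of $0$ while $\veu$ still dominates it. This balancing act is precisely the technical innovation I expect the author to deploy.
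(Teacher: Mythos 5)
Your overall strategy (order-interval Poincar\'{e} map plus Schauder) is genuinely different from the paper's: the paper proves Theorem~\ref{thm:existence_persistent_entire_solution} via a global bifurcation theorem of Rabinowitz type (recalled here as Theorem~\ref{thm:Alfaro_Griette}, from \cite{Alfaro_Griette}), applied to the compact map $F(\alpha,\vect{f})=(\cbQ+M)^{-1}(-\veC\vect{f}\circ\vect{f}+\alpha\vect{f})$ on the cone of nonnegative periodic functions; the continuum of nontrivial fixed points bifurcating from the linearization, together with the \emph{a priori} bound of Corollary~\ref{cor:global_bound_entire} and a strong-positivity lemma on the boundary of the cone, yields the positive periodic entire solution. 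This sidesteps entirely the need for a sub-solution that is propagated by the Cauchy flow.

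As for your sketch: you correctly identify that the lower bound is the crux, and you rightly reject the naive linear bound on $\veC\veu\circ\veu$. But the gap you leave open is real, and it is subtler than ``a balancing act in the perturbation of $\veL$.'' The paper's actual tool in this direction is the \emph{present-time comparison} $u_j(t,\cdot)\leq\kappa_M u_i(t,\cdot)^p$ (Corollary~\ref{cor:comparison_of_components_in_real_time}), valid only for $t\geq 1$, which yields a cooperative control of the form $\cbQ\veu\geq-D\veu^{\circ(1+p)}$ from time $1$ onward. This is precisely the device used to prove Theorems~\ref{thm:persistence_large_solutions}, \ref{thm:hair_trigger_effect} and \ref{thm:FG_formula} --- but it does \emph{not} readily close your Poincar\'{e}-map argument: on the initial layer $[0,1]$ one only has a linear (exponentially lossy) lower bound, so after one period the solution satisfies $\veu(T,\cdot)\geq\varepsilon\upe^{-\beta}\veu_0(0,\cdot)$ rather than $\geq\varepsilon\veu_0(0,\cdot)$, and the comparison with the sub-solution of the auxiliary cooperative system merely preserves (does not improve) this degraded lower bound. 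Hence $\Pi(\mathcal{K})\not\subset\mathcal{K}$ for the order interval as you defined it, and an additional idea --- beyond what the paper itself uses for this theorem --- would be needed to restore the invariance. Until that step is supplied, the proposal is not a proof.
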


Actually, when $\lambda_1'<0$ and $z\in\R^n\mapsto\lambda_{1,z}$ is not maximal at $z=0$, 
all solutions of the Cauchy problem starting from sufficiently large initial conditions persist locally
uniformly, as stated in the next result.

\begin{thm}\label{thm:persistence_large_solutions}
    Assume $\lambda_1'<0$ and the existence of $z\in\R^n$ such that:
    \begin{enumerate}
	\item $\lambda_{1,z}<0$;
	\item $\zeta\in(0,2)\mapsto\lambda_{1,\zeta z}$ is increasing in a neighborhood of $1$;
	\item there exists $C>0$, $B\in\R$ such that, for all $x\in\R^n$ such that $z\cdot x\leq B$, 
	    $\min_{i\in[N]}u_{\upini,i}(x)\geq C^{-1}\upe_{z}(x)$.
    \end{enumerate}

    Then the solution $\veu$ of the Cauchy problem \eqref{sys:KPP}--\eqref{IC} satisfies
    \begin{equation}
	\liminf_{t\to+\infty}\min_{i\in[N]}\inf_{|x|\leq R}u_i(t,x)>0\quad\text{for all }R>0.
	\label{eq:Cauchy_persistence}
    \end{equation}
\end{thm}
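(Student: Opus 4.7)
My strategy will be to dominate the Cauchy solution $\veu$ from below by a truncated exponential profile built from the principal eigenfunction $\veu_z$, and to exploit the new cooperative one-sided control announced in the abstract. The key algebraic identity is $\cbQ(\upe_z\veu_z)=\lambda_{1,z}\upe_z\veu_z$: because $\lambda_{1,z}<0$, any scalar multiple $\delta\upe_z\veu_z$ satisfies $\dcbP(\delta\upe_z\veu_z)-\veL(\delta\upe_z\veu_z)=\lambda_{1,z}\delta\upe_z\veu_z\ll\vez$, and this strictly negative residual will absorb the KPP nonlinearity $-\veC(\delta\upe_z\veu_z)\circ(\delta\upe_z\veu_z)$ on the half-space where $\delta\upe_z(x)$ is pointwise small---precisely the region in which hypothesis~(3) supplies a matching lower bound for $\veu_\upini$.

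Concretely, I would first use periodicity and positivity of $\veu_z$ to upgrade hypothesis~(3) into the componentwise estimate $\veu_\upini(x)\geq c\upe_z(x)\veu_z(0,x)$ on some half-space $\{z\cdot x\leq B'\}$ with $B'\leq B$. To handle the complementary half-space, where $\veu_\upini$ is only known to be nonnegative, I would truncate the profile down to $\vez$ via a smooth cutoff supported in $\{z\cdot x\leq B\}$. To organize the saturated regime in which the exponential growth of $\upe_z$ would otherwise dominate, I would also use the periodic uniformly positive entire solution $\veu^\star$ from Theorem~\ref{thm:existence_persistent_entire_solution}: the scaled function $\eta\veu^\star$ is itself a subsolution of \eqref{sys:KPP} for every $\eta\in(0,1]$, thanks to the KPP identity
\[
\dcbP(\eta\veu^\star)-\veL(\eta\veu^\star)+\veC(\eta\veu^\star)\circ(\eta\veu^\star)=\eta(\eta-1)\veC\veu^\star\circ\veu^\star\leq\vez.
\]
The final candidate subsolution $\underline{\veu}$ is then a suitable componentwise minimum of the truncated exponential and $\eta\veu^\star$, chosen with $c$ and $\eta$ small enough that $\underline{\veu}(0,\cdot)\leq\veu_\upini$ everywhere. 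The monotonicity of $\zeta\mapsto\lambda_{1,\zeta z}$ near $\zeta=1$ will be used to perturb $z$ into $\zeta z$ with $\zeta$ slightly less than $1$, making $\lambda_{1,\zeta z}$ strictly more negative and providing the spectral slack needed to absorb the non-cooperative cross-terms.

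To promote the subsolution property into the pointwise lower bound $\underline{\veu}\leq\veu$---which is not automatic because \eqref{sys:KPP} fails the comparison principle---I will invoke the paper's new cooperative one-sided control: one constructs a linear cooperative auxiliary system (depending on an a priori $L^\infty$ upper bound for $\veu$) for which $\veu$ is a supersolution and $\underline{\veu}$ a subsolution, and the cooperative maximum principle applies. Once $\underline{\veu}\leq\veu$ is in hand, the periodicity of the coefficients transforms the exponential lower bound into a uniformly positive one on every compact set in long time, yielding \eqref{eq:Cauchy_persistence}. \textbf{The main obstacle} will be constructing such a cooperative auxiliary system whose sub- and super-solution properties are simultaneously compatible with $\underline{\veu}$ and $\veu$ in spite of the lack of pointwise comparison for \eqref{sys:KPP}; the monotonicity hypothesis on $\zeta\mapsto\lambda_{1,\zeta z}$ will be exactly the spectral flexibility that enables this compatibility and is the delicate technical point where the paper's novelty is concentrated.
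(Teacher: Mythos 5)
Your overall architecture (an exponential sub-solution built from $\upe_z\veu_z$, plus a cooperative one-sided control to replace the missing comparison principle) points in the right direction, but two of its load-bearing steps fail. First, the cooperative control you invoke is too weak. With only an a priori $L^\infty$ bound on $\veu$, the best control on the cross terms is $(\veC\veu)\circ\veu\leq C'\veu$ with $C'$ a fixed (not small) constant, so your ``linear cooperative auxiliary system'' is essentially $\cbQ\vev=-C'\vev$, whose relevant eigenvalue $\lambda_{1,z}+C'$ has no reason to be negative: its solutions may decay exponentially and yield no persistence. This is precisely the obstruction the paper points out (a non-cooperative KPP system cannot in general be bounded from below, globally in time, by a cooperative system with the same linear part), and the monotonicity hypothesis on $\zeta\mapsto\lambda_{1,\zeta z}$ gives no ``spectral slack'' against an $O(1)$ linear absorption term. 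The paper's actual mechanism is nonlinear: the Gaussian-estimate/Harnack result (Corollary \ref{cor:comparison_of_components_in_real_time}) gives $u_j(t,x)\leq\kappa_M u_i(t,x)^p$ with $p\in(0,1)$ for $t\geq 1$, hence $(\veC\veu)\circ\veu\leq D\veu^{\circ(1+p)}$, i.e.\ a cooperative comparison system $\cbQ\vev=-D\vev^{\circ(1+p)}$ with superlinear absorption, against which the slack $|\lambda_{1,z}|$ does win once the sub-solution is small; since this control only starts at $t=1$, one must also re-derive the exponential lower bound on $\veu(1,\cdot)$, which the paper does with a rough linear sub-solution. Nothing in your sketch produces this sublinear-exponent control, and without it your comparison step collapses.

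Second, your candidate sub-solution is not one. A smooth cutoff of $\upe_z\veu_z$ supported in $\{z\cdot x\leq B\}$ violates the differential inequality in the transition region: the first- and second-derivative terms of the cutoff are of the same order as the retained terms and carry the wrong sign, with no small parameter to absorb them (one cannot truncate an exponential sub-solution by multiplication). Moreover, the componentwise \emph{minimum} of two sub-solutions is not a generalized sub-solution: min preserves super-solutions while max preserves sub-solutions (cf.\ Proposition \ref{prop:generalized_comparison_principle}), the kink on the switching set producing a singular term of the wrong sign; and you cannot switch to a maximum because $\eta\veu^\star$ is not below $\veu_{\upini}$, whose guaranteed lower bound $C^{-1}\upe_z$ vanishes as $z\cdot x\to-\infty$. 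The paper resolves both issues at once with the two-exponential profile $\frac1C\upe_z\veu_z-A\upe_{(1+\zeta)z}\veu_{(1+\zeta)z}$, whose positive part (a max with $\vez$, the correct lattice operation) is a generalized sub-solution of the cooperative system once $A$ is large, because enlarging $A$ shrinks its sup-norm and lets $|\lambda_{1,z}|$ absorb $D([\cdot]^+)^{\circ p}$; hypothesis (2) enters there with exponent $(1+\zeta)>1$, to guarantee $\lambda_{1,(1+\zeta)z}\geq\lambda_{1,z}$ for the corrector, not, as you propose, with $\zeta<1$ to gain extra negativity. Finally, this sub-solution is positive only on a far half-space $\{z\cdot x\ll 0\}$, so reaching an arbitrary ball $B(0,R)$ requires the Harnack inequality of Proposition \ref{prop:harnack_inequality}; periodicity alone is not an argument.
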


As stated by the following result, the stronger condition $\lambda_1<0$ is sufficient to ensure 
the locally uniform persistence of any nonzero solution of the Cauchy problem. This type of 
property is usually referred to as a \textit{hair-trigger effect}. Note that if $\lambda_1'<0$
and $z\in\R^n\mapsto\lambda_{1,z}$ is maximal at $z=0$, then obviously $\lambda_1<0$, so that
in all cases $\lambda_1'<0$ implies the persistence of at least some solutions.

\begin{thm}\label{thm:hair_trigger_effect}
    Assume $\lambda_1<0$. 

    Then all solutions $\veu$ of the Cauchy problem \eqref{sys:KPP}--\eqref{IC} \red{with nonzero initial conditions
    $\veu_{\upini}$} satisfy \eqref{eq:Cauchy_persistence}.
\end{thm}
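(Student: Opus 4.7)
The plan is to argue by contradiction, building on the lower cooperative control of solutions of \eqref{sys:KPP} by solutions of the linear cooperative linearization, which is the novelty emphasized in the abstract. A preliminary observation is that, by parabolic regularity combined with a F\"{o}ldes--Pol\'{a}\v{c}ik-type Harnack inequality for the semilinear system, any solution $\veu$ issued from a nonzero nonnegative initial datum becomes componentwise strictly positive at every positive time.

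Suppose the conclusion fails: there exist $R>0$, $i\in[N]$ and sequences $t_k\to+\infty$, $x_k\in\overline{B_R}$ such that $u_i(t_k,x_k)\to 0$. Using the uniform boundedness of solutions of \eqref{sys:KPP}, standard parabolic estimates, and space-time periodicity (after replacing $t_k$ by its class modulo $T$ and $x_k$ by its class modulo $L$ and extracting a convergent subsequence), the shifted solutions $\veu(t_k+\cdot,x_k+\cdot)$ converge in $\caC^{1,2}_{loc}(\R\times\R^n,\R^N)$ to an entire nonnegative solution $\veu^\infty$ of a translated version of \eqref{sys:KPP} satisfying $u_i^\infty(0,0)=0$. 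The Harnack inequality applied to the cooperative, fully coupled linearization of this limit then forces $\veu^\infty\equiv\vez$.

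In particular, for every $\delta>0$ and $r>0$, the bound $\max_{i\in[N]}u_i\leq\delta$ holds on $(t_k-r,t_k+r)\times B_r(x_k)$ for all sufficiently large $k$. Since $\lambda_1<0$, one can fix $\delta>0$ so small that $\lambda_1+\delta K_{\veC}<0$, where $K_{\veC}$ is a pointwise upper bound for the entries of $\veC$ times $N$. On the smallness box, the Hadamard term is then controlled componentwise by $\veC\veu\circ\veu\leq\delta K_{\veC}\veu$, which yields $\dcbP\veu\geq(\veL-\delta K_{\veC}\vect{I})\veu$. Hence $\veu$ is a supersolution, on this box, of the linear cooperative system whose operator is $\cbQ+\delta K_{\veC}\vect{I}$, and whose generalized principal eigenvalue equals $\lambda_1+\delta K_{\veC}<0$. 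Using the max characterization of $\lambda_1$, more precisely choosing $z\in\R^n$ with $\lambda_{1,z}$ close enough to $\lambda_1$ so that $\lambda_{1,z}+\delta K_{\veC}<0$, one builds a Dirichlet sub-solution $\vew$ of this cooperative system on a large ball that grows exponentially in time. Comparison with $\vew$, initialized from the small bump given by the Harnack positivity, then forces $\veu$ to exceed $\delta$ on the box after a time independent of $k$, contradicting the smallness above.

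The main obstacle is to reconcile the two scales in this argument: the box on which $\veu$ is known to be small must be large enough to accommodate a Dirichlet ball on which the cooperative sub-solution already has negative principal eigenvalue, a property guaranteed only in the limit of infinite radius. Handling this requires the convergence of Dirichlet ball principal eigenvalues of $\cbQ+\delta K_{\veC}\vect{I}$ toward its generalized principal eigenvalue on $\R^n$ as the radius tends to infinity, a non-self-adjoint spectral result relying on the full family $(\lambda_{1,z})_{z\in\R^n}$ studied in \cite{Girardin_Mazari_2022} and on the positive Freidlin--G\"artner spreading speeds associated with $\lambda_1<0$ that were used already in the proof of Theorem~\ref{thm:persistence_large_solutions}.
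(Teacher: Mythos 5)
Your contradiction scheme takes a genuinely different route from the paper's proof, but it has a gap at the comparison step. After the limiting argument, the bound $\max_{i\in[N]}u_i\leq\delta$ on $(t_k-r,t_k+r)\times B_r(x_k)$ yields the differential inequality for the linear cooperative operator $\cbQ+\delta K_{\veC}\vect{I}$ only \emph{on that box}. You then initialize a Dirichlet sub-solution $\vew$ at time $t_k-r$ from ``the small bump given by the Harnack positivity'' and claim it overtakes $\delta$ after a time independent of $k$. But the Harnack inequality of Proposition~\ref{prop:harnack_inequality} bounds the minimum of $\veu$ at a later time by a constant times its maximum at an earlier time, and the very limiting argument that produced the smallness box forces that earlier maximum to tend to zero as $k\to+\infty$. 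Hence the initial amplitude of $\vew$ at $t_k-r$ is a $k$-dependent quantity $\eta_k\to 0$, and exponential growth at a fixed rate $\gamma>0$ needs a time of order $\gamma^{-1}\log(\delta/\eta_k)\to+\infty$ to reach $\delta$, which eventually exceeds the fixed temporal width $2r$ of the box. There is no $k$-independent positive initial datum to anchor the sub-solution: the linearized inequality is unavailable outside the smallness box and so cannot be propagated back to a fixed reference time such as $t=1$, where the Harnack bump does have a $k$-free amplitude.

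The paper's proof sidesteps this by never linearizing. Through Corollary~\ref{cor:comparison_of_components_in_real_time} it controls $\veC\veu$ by $D\veu^{\circ p}$, $p\in(0,1)$, pointwise for \emph{all} $t\geq 1$, so that $\dcbP\veu\geq\veL\veu-D\veu^{\circ(1+p)}$ holds globally in $[1,+\infty)\times\R^n$ and not merely where $\veu$ is small. The super-linear exponent $1+p>1$ is what rescues the argument: for the time-periodic Dirichlet principal eigenfunction $\veu_{\upDir}$ on a large ball with negative Dirichlet eigenvalue $\lambda$, one has $\cbQ(\varepsilon\veu_{\upDir})=\lambda\varepsilon\veu_{\upDir}\leq-D(\varepsilon\veu_{\upDir})^{\circ(1+p)}$ as soon as $-\lambda\geq D\varepsilon^p$, a condition met by shrinking $\varepsilon$, which yields a sub-solution valid for all $t\geq 1$ and initialized at $t=1$ by the $k$-free positivity of $\veu(1,\cdot)$. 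You correctly identified the need for the Dirichlet eigenvalue convergence $\lambda_{1,\upDir}(B(0,R))\to\lambda_1$; both approaches rely on it, and in neither does the radius need to grow with $k$. The missing idea in your sketch is the nonlinear, power-type, global-in-time control of the remainder supplied by Corollary~\ref{cor:comparison_of_components_in_real_time}, which is exactly what makes a $k$-free, global sub-solution possible.
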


The next result shows that when $\lambda_1\geq 0$, solutions of the Cauchy problem starting from sufficiently small 
initial conditions go extinct locally uniformly. This violation of the hair-trigger effect is especially interesting in the 
intermediate case $\lambda_1'<0\leq\lambda_1$. 

\begin{thm}\label{thm:extinction_small_solutions}
    Assume $\lambda_1\geq 0$ and the existence of $z\in\R^n$ such that:
    \begin{enumerate}
	\item $\lambda_{1,z}\geq 0$;
	\item there exists $C>0$ such that, for all $x\in\R^n$, $\max_{i\in[N]}u_{\upini,i}(x)\leq C\upe_z(x)$.
    \end{enumerate}

    Then the solution $\veu$ of the Cauchy problem \eqref{sys:KPP}--\eqref{IC} satisfies
    \begin{equation}
	\lim_{t\to+\infty}\max_{i\in[N]}\sup_{|x|\leq R}u_i (t,x)= 0\quad\text{for all }R>0.
	\label{eq:local_extinction}
    \end{equation}
\end{thm}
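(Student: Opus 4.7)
The plan is to dominate the nonlinear solution by a decaying solution of the linearized system, exploiting the cooperativity of $\cbQ$ at the level of comparison.

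Let $\veu_z$ denote the positive periodic principal eigenfunction attached to $\lambda_{1,z}$, so that $\cbQ(\upe_z \veu_z)=\lambda_{1,z}\upe_z \veu_z$, and set
\[
\vew(t,x):=K\,\upe^{-\lambda_{1,z}t}\,\upe_z(x)\,\veu_z(t,x)
\]
for a constant $K>0$ to be fixed. A direct computation gives $\cbQ\vew=\vez$. By continuity and periodicity, $\eta:=\min_{i\in[N]}\min_{\clOmper}(u_z)_i>0$, and choosing $K:=C/\eta$ with $C$ the constant from the decay hypothesis ensures $\vew(0,\cdot)\geq\veu_\upini$ pointwise.

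Meanwhile, from \ref{ass:KPP} and $\veu\geq\vez$ one gets $\veC\veu\circ\veu\geq\vez$, hence $\cbQ\veu=-\veC\veu\circ\veu\leq\vez$. The operator $\cbQ$ being cooperative (by \ref{ass:cooperative}) and fully coupled (by \ref{ass:irreducible}), the parabolic comparison principle for cooperative systems applies---$\veu$ is bounded and $\vew$ grows at most exponentially in $x$, so the difference $\vew-\veu$ lies in a sufficient growth class---and gives $\veu\leq\vew$ on $[0,+\infty)\times\R^n$. In the strict regime $\lambda_{1,z}>0$, bounding $\upe_z(x)\leq\upe^{|z|R}$ and $\veu_z$ by its sup-norm on the compact $\{|x|\leq R\}$ yields
\[
\max_i\sup_{|x|\leq R}u_i(t,x)\leq K\,\upe^{-\lambda_{1,z}t}\,\upe^{|z|R}\,\|\veu_z\|_\infty\xrightarrow[t\to+\infty]{}0,
\]
which is \eqref{eq:local_extinction}.

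The main obstacle is the boundary case $\lambda_{1,z}=0$, in which $\vew$ no longer decays. The plan there is to perturb $z$ to $z':=sz$ with $s\in(0,1)$ chosen so that $\lambda_{1,z'}>0$; such an $s$ exists whenever the scalar concave map $s\mapsto\lambda_{1,sz}$ takes a strictly positive value on $[0,1]$, which holds in particular when $\lambda_1'>0$ or when its argmax on that interval is interior with positive value. The initial datum retains an exponential bound with respect to $\upe_{z'}$: using $\upe_{z'}=\upe_z^{s}$ and a log-convexity interpolation between $u_{\upini,i}\leq C\upe_z$ and the uniform bound $u_{\upini,i}\leq\|\veu_\upini\|_\infty$, one obtains $u_{\upini,i}\leq\|\veu_\upini\|_\infty^{1-s}C^{s}\upe_{z'}$ pointwise, and the above supersolution argument applied to $z'$ closes that case. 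A residual geometric subcase in which $\lambda_{1,sz}\leq 0$ on the entire segment $\{sz:s\in[0,1]\}$ is handled either by a perturbation transversal to $z$ (available when $\lambda_1>0$, using $\nabla\lambda_{1,z}\perp z$ at such tangency points), or by an $\omega$-limit/compactness argument ruling out any nontrivial bounded entire limit dominated by $\upe_z\veu_z$.
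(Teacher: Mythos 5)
Your argument for the strict case $\lambda_{1,z}>0$ is sound and essentially matches the paper's construction: the super-solution $K\upe^{-\lambda_{1,z}t}\upe_z\veu_z$ solves $\cbQ\vew=\vez$, dominates initially, and decays locally. (The paper additionally truncates this super-solution by the constant $(K+M)\veo$ and compares at the level of a cooperative \emph{semilinear} system with $-\diag(c_{i,i})\vev\circ\vev$ on the right-hand side, via Proposition~\ref{prop:generalized_comparison_principle}; this is a cleaner route around the ``sufficient growth class'' question that you wave at, but your linear Phragm\'en--Lindel\"of-type comparison would likely also go through.)

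The genuine gap is the boundary case $\lambda_{1,z}=0$, which is precisely the hard content of this theorem (the paper points out that the sub-case $\lambda_1=0>\lambda_1'$ was raised as an open problem by Nadin). Your perturbation scheme breaks down exactly there. First, if $\lambda_1=0$ then $\lambda_{1,z'}\leq\lambda_1=0$ for \emph{every} $z'\in\R^n$, so no rescaling $z'=sz$, and no perturbation whatsoever, can yield $\lambda_{1,z'}>0$; your interpolation branch is vacuous in this case. Second, the ``transversal'' fallback is not only inapplicable (you restrict it to $\lambda_1>0$), but also does not preserve the hypothesis: for $z'=z+\varepsilon w$ with $w\perp z$ one has $\upe_{z'}(x)=\upe_z(x)\upe^{\varepsilon w\cdot x}$, which is \emph{smaller} than $\upe_z(x)$ on the half-space $\{w\cdot x<0\}$, so $u_{\upini,i}\leq C\upe_z$ does not yield any bound $u_{\upini,i}\leq C'\upe_{z'}$. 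Third, your final fallback, an ``$\omega$-limit/compactness argument ruling out any nontrivial bounded entire limit dominated by $\upe_z\veu_z$,'' is not an argument but a restatement of what must be proved: the paper's proof in this regime (mimicking the $\lambda_1'=0$ case of Theorem~\ref{thm:extinction}) sets
\[
M_T=\sup_{x\in\R^n}\max_{i\in[N]}\frac{u_i(T,x)}{\upe^{z\cdot x}u_{z,i}(T,x)},
\]
proves strict monotonicity of $T\mapsto M_T$ via the strong comparison principle applied to the \emph{semilinear} cooperative system (the strict sign of $-\veC\veu\circ\veu$ is what produces the contradiction when equality is attained, either pointwise or along a translated limit), and then shows $M_T\to 0$ by a further translation-compactness argument. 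None of this is present in your proposal, so the theorem is only established under the strictly stronger hypothesis $\lambda_{1,z}>0$ (or when the segment $[0,z]$ meets the open region $\{\lambda_{1,\cdot}>0\}$, which forces $\lambda_1>0$).
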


This collection of results indicates in particular that solutions evolving from compactly supported initial conditions persist when
$\lambda_1<0$ and go extinct at least locally uniformly when $\lambda_1\geq 0$. It becomes then natural 
to investigate spreading properties in the case $\lambda_1<0$. The last result provides a Freidlin--G\"{a}rtner-type formula
\cite{Gartner_Freidlin} for the asymptotic spreading speed of such solutions. 

We introduce, for any direction $e\in\mathbb{S}^{n-1}$ and any decay rate $\mu>0$:
\begin{equation}
    c^\mu_e = \frac{\lambda_{1,-\mu e}}{-\mu},\quad c^\star_e=\min_{\mu>0}c^\mu_e,\quad c^{\upFG}_e = \min_{\substack{e'\in\mathbb{S}^{n-1}\\e\cdot e'>0}}\frac{c^\star_{e'}}{e\cdot e'}.
    \label{eq:spreading_speed_direction_e}
\end{equation}
The fact that the minima involved in the definition of $c^\star_e$ and $c^{\upFG}_e$ are well-defined is classical
in KPP-type problems and will be verified later on.

\begin{thm}\label{thm:FG_formula}
    Assume $\lambda_1<0$. 

    Then all solutions $\veu$ of the Cauchy problem \eqref{sys:KPP}--\eqref{IC} \red{with nonzero and compactly
    supported initial conditions $\veu_{\upini}$}
    spread in the direction $e\in\mathbb{S}^{n-1}$ at speed $c^{\upFG}_e$, namely
    \begin{equation}
	\liminf_{t\to+\infty}\min_{i\in[N]}\inf_{|x|\leq R}u_i(t,x+cte)>0\quad\text{for all }R>0\text{ and }c\in(0,c^{\upFG}_e),
	\label{eq:spreading_subestimate}
    \end{equation}
    \begin{equation}
	\lim_{t\to+\infty}\max_{i\in[N]}\sup_{|x|\leq R}u_i(t,x+cte)=0\quad\text{for all }R>0\text{ and }c>c^{\upFG}_e.
	\label{eq:spreading_superestimate}
    \end{equation}
\end{thm}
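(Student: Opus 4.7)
The strategy is to prove the upper and lower spreading estimates separately, in each case via a comparison with an appropriate cooperative auxiliary problem.

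\emph{Upper estimate \eqref{eq:spreading_superestimate}.} Since $\veC\veu\circ\veu\geq\vez$ whenever $\veu\geq\vez$, every nonnegative solution of \eqref{sys:KPP} satisfies the linear differential inequality $\cbQ\veu\leq\vez$. For any $e'\in\Sn$ with $e\cdot e'>0$ and any $\mu>0$, the function
\begin{equation*}
    \vev(t,x)=K\upe^{-\lambda_{1,-\mu e'}t-\mu e'\cdot x}\veu_{-\mu e'}(t,x)
\end{equation*}
is an entire solution of $\cbQ\vev=\vez$. Because $\veu_{\upini}$ is compactly supported and $\veu_{-\mu e'}$ is uniformly positive, $K$ can be chosen large enough so that $\vev(0,\cdot)\geq\veu_{\upini}$ on $\R^n$. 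The parabolic comparison principle for the cooperative, fully coupled operator $\cbQ$ (valid by \ref{ass:cooperative}--\ref{ass:irreducible}) then yields $\veu\leq\vev$ on $[0,+\infty)\times\R^n$. Evaluating at $x\mapsto x+cte$ with $|x|\leq R$, the exponent becomes $-(\lambda_{1,-\mu e'}+\mu c\, e\cdot e')\,t-\mu e'\cdot x$, which tends to $-\infty$ uniformly in $x$ whenever $c\,e\cdot e'>c^\mu_{e'}$. By \eqref{eq:spreading_speed_direction_e}, every $c>c^{\upFG}_e$ admits such a pair $(e',\mu)$, which gives \eqref{eq:spreading_superestimate}.

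\emph{Lower estimate \eqref{eq:spreading_subestimate}.} By Theorem \ref{thm:hair_trigger_effect}, $\veu$ already persists locally uniformly. I would proceed via the classical Freidlin--G\"{a}rtner reduction: it suffices to establish that $\veu$ spreads in each direction $e'\in\Sn$ at the linear speed $c^\star_{e'}$, since the projection onto $e$ then produces \eqref{eq:spreading_subestimate} via the identity $c^{\upFG}_e=\min_{e\cdot e'>0}c^\star_{e'}/(e\cdot e')$. To establish spreading in a fixed direction $e'$, I would take $\mu>0$ realizing $c^\star_{e'}=c^\mu_{e'}$, form the linear principal wave $\upe^{-\lambda_{1,-\mu e'}t-\mu e'\cdot x}\veu_{-\mu e'}(t,x)$, multiply it by a smooth compactly supported spatial cut-off, and thus produce a small compactly supported subsolution $\vew$ of a cooperative companion to \eqref{sys:KPP}---sufficiently small that the quadratic term $\veC\vew\circ\vew$ is absorbed by the principal part of $\cbQ$. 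The \FPH inequality combined with Theorem \ref{thm:hair_trigger_effect} places $\vew(0,\cdot)$ below $\veu(T_0,\cdot)$ on the support of $\vew$ for $T_0$ large, after which the cooperative control from below advertised in the abstract propagates the inequality $\vew(t,\cdot)\leq\veu(T_0+t,\cdot)$ for all later times, delivering the required lower bound in direction $e'$.

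The central obstacle is establishing this pointwise lower control $\vew\leq\veu(T_0+\cdot,\cdot)$ between a cooperative subsolution $\vew$ and a solution of the non-cooperative system \eqref{sys:KPP}: no direct comparison principle is available, and the argument must exploit the smallness of $\vew$, the positivity of $\underline{\veC}$ from \ref{ass:KPP}, and the cooperative, fully coupled structure of $\cbQ$ to propagate the inequality across components without cooperative failure. This control from below is precisely the new technical contribution highlighted in the abstract, and it is what allows the Freidlin--G\"{a}rtner reduction---originally developed for scalar or cooperative KPP equations---to extend to the present non-cooperative setting.
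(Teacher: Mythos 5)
Your upper estimate is correct and in fact a touch more direct than the paper's: the paper first sandwiches the system between cooperative quadratic systems and then runs a linear super-solution argument inside that framework, while you simply observe that $\veu$ is a sub-solution of the linear cooperative problem $\cbQ\vev=\vez$ and compare directly. Both are valid.

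The lower estimate, however, has a genuine gap at the exact point you flag as ``the central obstacle.'' You correctly identify that one needs to propagate an inequality $\vew\leq\veu$ between a sub-solution of a cooperative auxiliary system and a solution of the non-cooperative system \eqref{sys:KPP}, and you propose to obtain this from ``the smallness of $\vew$.'' That mechanism does not work. The obstruction to cooperativity comes from the cross terms $-c_{i,j}u_j u_i$ with $j\neq i$: smallness of the candidate sub-solution $\vew$ gives you no control on these because their sign and size are governed by the actual components $u_j$ of the solution $\veu$, which are uniformly positive after the initial time. What is actually needed --- and what the paper develops as its new technical tool, Corollary \ref{cor:comparison_of_components_in_real_time} --- is a \emph{present-time comparison of the components of $\veu$ itself}: two-sided Gaussian estimates on the fundamental matrix (upper bounds from parabolic regularity, lower bounds via the cooperative Harnack inequality of Proposition \ref{prop:harnack_inequality}) yield $u_j(t,x)\leq\kappa_M\,u_i(t,x)^p$ for all $i,j$ and all $t\geq 1$. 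This converts the offending cross terms into self-terms, $(\veC\veu)\circ\veu\leq D\,\veu^{\circ(1+p)}$, so that $\veu$ becomes a genuine super-solution of the cooperative semilinear system $\cbQ\vev=-D\vev^{\circ(1+p)}$ from $t=1$ onwards. Only after this reduction does the comparison with your compactly supported sub-solution $\vew$ become a legitimate application of Proposition \ref{prop:comparison_principle}.

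Once that ingredient is in place, your route (directional spreading via a cut-off of the principal wave, then the Freidlin--G\"{a}rtner projection) is one possible continuation; the paper instead proves the full Freidlin--G\"{a}rtner formula for the auxiliary cooperative system $\cbQ\vev=-D\vev^{\circ(1+p)}$ by normalizing with the periodic entire solution $\vev^\star$ and invoking Du--Li--Shen \cite{Du_Li_Shen_2022}, which packages the Wulff-shape assembly and the upper directional estimates in one stroke. Your direct route would still need the Berestycki--Hamel--Nadin cylinder construction to produce the compactly supported sub-solution and a separate assembly argument; neither is trivial, and you should at minimum cite what replaces them. But the unavoidable missing piece in your sketch is Corollary \ref{cor:comparison_of_components_in_real_time}: without it, the inequality $\vew\leq\veu$ does not propagate, and the lower estimate does not follow.
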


\subsection{Comments}

The first two results show that the sign of $\lambda_1'$ is a sharp criterion for the existence of nonnegative nonzero
entire solutions. However, when studying the long-time behavior of the Cauchy problem, the knowledge of the sign of $\lambda_1$
is also needed, and moreover in the case $\lambda_1'<0\leq\lambda_1$ the outcome depends also on the initial condition. 

The case $\lambda_1=0>\lambda_1'$ was stated by Nadin as an open problem in the scalar case \cite{Nadin_2010}, but is actually
within reach with the same methods. Since our paper covers the scalar case as the particular case $N=1$, Theorem 
\ref{thm:extinction_small_solutions} solves the question raised by \cite{Nadin_2010}.

The sharpness of the conditions on the size of (the exponential decay of) the initial condition in Theorems 
\ref{thm:persistence_large_solutions} and \ref{thm:extinction_small_solutions} can be discussed as in \cite[p. 1296]{Nadin_2010}.
We only mention that uniformly positive initial conditions will always satisfy the condition of Theorem 
\ref{thm:persistence_large_solutions} while compactly supported initial conditions will always satisfy that of 
Theorem \ref{thm:extinction_small_solutions}.

Heuristically, the results can be summarized as follows.
Since we restrict ourselves \textit{a priori} to bounded initial conditions, that is, to 
bounded perturbations of $\vez$, the family $(\lambda_{1,z})$ gives stability criteria
depending on the exponential decay $z$ at spatial infinity. When $\vez$ is unstable with respect 
to any exponential decay, that is, $\lambda_1<0$, then it is actually unstable with respect to 
compactly supported initial conditions and the hair-trigger effect holds. But when
$\lambda_1>0>\lambda_1'$, then some exponential decays are too strong and make $\vez$ stable:
the hair-trigger effect does not hold. For instance, for the scalar operator 
$\cbQ=\partial_t-\partial_{xx}+\partial_x-1/8$, the generalized principal eigenvalues satisfy 
$\lambda_{1,z}=z(1-z)-1/8$, and values of $z$ satisfying the monotonicity condition stated in 
Theorem \ref{thm:persistence_large_solutions} are $z\in(0,1/2)$. In this interval, the sign change
occurs at $z^\star = \left( 1-\sqrt{2}/2 \right)/2$. Applying Theorem
\ref{thm:persistence_large_solutions} with $z=z^\star-\varepsilon$ and Theorem 
\ref{thm:extinction_small_solutions} with $z=z^\star+\varepsilon$, we find that the zero 
steady state is, with respect to perturbations of the form $C\min(\upe_{z},\upe_{z'})$
with $z'\leq 0\leq z$:
\begin{itemize}
    \item unstable if $z\in[0,z^\star)$;
    \item stable if $z>z^\star$.
\end{itemize}
Interestingly, this confirms the crucial role of the monotonicity condition of Theorem 
\ref{thm:persistence_large_solutions}: the stability of the zero steady state is fully determined
by an arbitrarily small open neighborhood of $z^\star$, and in particular the other sign change
of $z\mapsto \lambda_{1,z}$ at $z=\left( 1+\sqrt{2}/2 \right)/2$ brings no additional stability
information.
Intuitively, the transport of the solution at speed $1$ towards the right, encoded in 
the advection term $+\partial_x$, washes away all solutions having an initial leftward 
exponential decay $z>\left( 1-\sqrt{2}/2 \right)/2$, that is, solutions whose initial
conditions are too thin-tailed towards the left. 

The Freidlin--G\"{a}rtner-type formula of Theorem \ref{thm:FG_formula} was established in the
space-time periodic scalar case ($N=1$) by Berestycki--Hamel--Nadin \cite[Theorem 1.13]{Berestycki_Hamel_Nadin}. For space-time 
periodic cooperative systems satisfying appropriate assumptions, it was established recently by
Du--Li--Shen\cite{Du_Li_Shen_2022}\red{, with the help of a monotone recursion method due to Weinberger \cite{Weinberger_1982}
that has had considerable impact in reaction--diffusion theory for more than four decades.
However, this method cannot be applied directly to non-cooperative KPP systems.} 
Our arguments of proof are very similar to those used to prove the other theorems\red{: the problem is reformulated in terms of
super- and sub-estimates, and solved thanks to double-sided controls by solutions of cooperative systems}.
Roughly speaking, asymptotic spreading results are still
stability properties and can be understood as persistence/extinction results in moving frames. 
With similar methods, the spreading speed of solutions evolving from exponentially decaying initial values could be investigated. 
Here we focus on the compactly supported case, which is more relevant biologically and usually provides in KPP-type problems
sub-estimates for more general invasions -- it will be clear from the proof that this is again the case here, 
despite the lack of comparison principle.
On the contrary, the construction of entire solutions that describe the invasion of open space by 
positive population densities at constant speed, namely \textit{pulsating traveling waves} \cite{Du_Li_Shen_2022,Nadin_2009},
is \red{an existence problem and not a super- or sub-estimate problem. Due to this fundamental difference,} 
it requires other methods. This problem will be investigated in a future sequel. There, we will prove in particular 
that $c^\star_e$ is the minimal wave speed of planar pulsating traveling waves in the direction $e$, as is standard in KPP-type problems.

Let us mention that, by drawing inspiration from \cite{Girardin_Mazari_2022} and \cite{Nadin_2011}, we 
could combine elementarily results on the dependence of the generalized principal eigenvalues on the coefficients
and the Freidlin--G\"{a}rtner formula to obtain dependence results for the spreading speed. We 
point out in particular that the spreading speed is in general not monotonic with respect to the diffusivity 
amplitude \cite{Nadin_2011}, but is monotonic with respect to the matrix entries $l_{i,j}$ \cite{Girardin_Mazari_2022}. 
We also point out that space homogeneity and time homogeneity of the coefficients, supplemented with appropriate specific 
conditions, lead to simplifications of the formula or to upper or lower estimates \cite{Girardin_Mazari_2022}.

The results we manage to prove in the present paper are analogous to their scalar counterparts 
\cite{Nadin_2010,Berestycki_Hamel_Nadin}
but their proofs are carefully improved \red{in such a way that the comparison principle is only applied on cooperative
systems with the same linear part as \eqref{sys:KPP} but with a modified nonlinear part}. 
This is the main difficulty and novelty of this work. It is actually known that not all results of the scalar
case can be extended in this way; in particular, Liouville-type results on the uniformly positive entire solution
are in general false even with constant coefficients
\cite{Girardin_2017,Girardin_2018,Girardin_Griette_2020,Morris_Borger_Crooks,Cantrell_Cosner_Yu_2018}.
In this regard, our intent is precisely to show what can be extended from the case $N=1$ to the general case, and what cannot.
\red{It is also known \cite[Section 1.4.1]{Girardin_2016_2} that there are simple and application-wise natural examples
of non-cooperative KPP systems that cannot be approximated from below globally in time by cooperative systems with the same linear part. 
In this paper, we overcome this obstacle thanks to space-time global bounds, Harnack inequalities, Gaussian estimates 
and by considering only large enough times. This approach is new for such systems and is inspired by recent works on
nonlocal equations \cite{Bouin_Henderso-1}. Other nonlocal equations that are continuous versions of non-cooperative KPP systems 
\cite{Griette_2017} could be analyzed efficiently with the same techniques.}

\section{\red{Preliminaries}}
In this section, we establish or recall basic results that will be used repeatedly in the main proofs.

\subsection{Global boundedness estimates and absorbing set}

In this subsection, we establish that a solution of \eqref{sys:KPP}--\eqref{IC} satisfies
a global boundedness estimate that depends only on the initial values, and that
becomes uniform with respect to the initial values in long time. This is a direct
adaptation of the proof of \cite[Theorem 1.2]{Girardin_2016_2}, shortened
by the stronger assumption \ref{ass:KPP}.

\begin{prop}\label{prop:global_bound_absorbing_set}
    There exists a constant $K>0$ such that, for any solution $\veu$ of \eqref{sys:KPP}--\eqref{IC},
    \begin{equation*}
	\veu\leq \left(K+\sup_{x\in\R^n}\max_{i\in[N]}u_{\upini,i}(x)\right)\veo \quad\text{in }[0,+\infty)\times\R^n,
    \end{equation*}
    \begin{equation*}
	\limsup_{t\to+\infty}\sup_{x\in\R^n}\max_{i\in[N]}u_i(t,x)\leq K.
    \end{equation*}
\end{prop}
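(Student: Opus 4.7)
The plan is to exploit the strict positivity of $\underline{\veC}$ (assumption \ref{ass:KPP}) to construct a spatially constant, time-dependent supersolution that bounds all components of $\veu$ simultaneously, thereby circumventing the lack of a global comparison principle for \eqref{sys:KPP}. I would set $\underline{c}:=\min_{i,j\in[N]}\underline{c}_{i,j}>0$ (positive by \ref{ass:KPP}) and $\overline{l}:=\max\bigl(0,\max_{i,j\in[N]}\max_{\Omega_\upp} l_{i,j}\bigr)$, and let $\phi\in\caC^1([0,+\infty),\R)$ solve the scalar logistic ODE $\phi'=N\overline{l}\phi-\underline{c}\phi^2$ with $\phi(0)=\sup_{x\in\R^n}\max_{i\in[N]}u_{\upini,i}(x)$. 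Elementary analysis yields $\phi(t)\leq\phi(0)+K$ with $K:=N\overline{l}/\underline{c}$, and $\phi(t)\to K$ as $t\to+\infty$. Both assertions then follow immediately from the comparison $u_i(t,x)\leq\phi(t)$ for all $i,t,x$, which is the one nontrivial thing to prove.

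To establish the comparison, I would proceed in two steps. Nonnegativity $\veu\geq\vez$ is classical under \ref{ass:cooperative}: at a putative first zero of some $u_i$, the off-diagonal entries of $\veL$ being pointwise nonnegative force $\caP_i u_i\geq 0$ there, forbidding the downcrossing. For the upper bound, set $w_i:=u_i-\phi$. Since $\phi$ is constant in space, $\caP_i w_i=\caP_i u_i-\phi'$ for every $i$. If $(t_0,x_0,i^*)$ is a would-be maximum where $w_{i^*}(t_0,x_0)=0$ while $w_j(t,y)\leq 0$ for all $j,y$ and $t\leq t_0$, then $u_{i^*}(t_0,x_0)=\phi(t_0)$ and $0\leq u_j(t_0,x_0)\leq \phi(t_0)$, so
\begin{equation*}
    \sum_j l_{i^*,j}u_j \leq N\overline{l}\phi(t_0),\qquad u_{i^*}\sum_j c_{i^*,j}u_j \geq \underline{c}_{i^*,i^*}u_{i^*}^2 \geq \underline{c}\phi(t_0)^2,
\end{equation*}
whence $\caP_{i^*}u_{i^*}(t_0,x_0)\leq N\overline{l}\phi(t_0)-\underline{c}\phi(t_0)^2=\phi'(t_0)$, i.e.\ $\caP_{i^*}w_{i^*}(t_0,x_0)\leq 0$. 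This contradicts the parabolic maximum principle at an interior maximum as soon as the inequality can be made strict, which is achieved by replacing $\phi(0)$ by $\phi(0)+\delta$ and letting $\delta\to 0^+$ at the end.

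The main technical obstacle is that the supremum of $w_{i^*}$ on $\R^n$ is in general not attained, so the heuristic of the preceding paragraph has to be localized. The standard workaround is to substitute $w_i-\epsilon\sqrt{1+|x|^2}-\epsilon t$ for $w_i$: the spatial growth of $\sqrt{1+|x|^2}$ together with the local parabolic regularity of $\veu$ (which gives local-in-time boundedness) forces the supremum to be attained at some $(t_\epsilon,x_\epsilon)$, the $\epsilon t$ drift ensures $t_\epsilon>0$, and the $O(\epsilon)$ contributions of $\caP_{i^*}$ applied to the auxiliary functions—namely derivatives of $\sqrt{1+|x|^2}$ bounded uniformly thanks to \ref{ass:ellipticity} and \ref{ass:smooth_periodic}—are absorbed into the strict margin $N\overline{l}\phi(t_\epsilon)-\underline{c}\phi(t_\epsilon)^2-\phi'(t_\epsilon)<0$ coming from the perturbation $\phi(0)\mapsto\phi(0)+\delta$. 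Taking $\epsilon\to 0$ and then $\delta\to 0^+$ yields $u_i\leq\phi$ globally, whence both inequalities of the proposition. As the excerpt notes, this is a streamlined version of \cite[Theorem 1.2]{Girardin_2016_2}, where the weaker positivity hypothesis on $\veC$ required a more delicate component-by-component decomposition; here the uniform strict positivity in \ref{ass:KPP} lets the single scalar $\phi$ dominate all coordinates at once.
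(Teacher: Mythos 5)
Your overall strategy -- comparing all components at once against a spatially homogeneous supersolution $\phi$ solving a scalar logistic ODE -- is in the same spirit as the paper, but you carry it out by a direct touching-point / maximum-principle argument, whereas the paper reduces to the \emph{scalar} comparison principle by a different trick. The paper observes that $\caP_i u_i\leq r\bigl(\sum_j u_j\bigr)(K-u_i)$ becomes $\caP_i u_i\leq ru_i(K-u_i)$ as soon as $u_i\geq K$ (since then $K-u_i\leq 0$ and $\sum_j u_j\geq u_i$), so that $\max(u_i,K)$ is a generalized sub-solution of the scalar logistic PDE $\caP_i u=ru(K-u)$; comparing it with the spatially constant super-solution starting at $K+M$ then finishes the proof with no touching-point analysis at all. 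This decoupling trick is what lets the paper invoke Protter--Weinberger as a black box.

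Your version has a genuine gap in the localization step. You claim the $O(\epsilon)$ contributions from $\caP_{i^*}$ acting on $\sqrt{1+|x|^2}$ and $\epsilon t$ ``are absorbed into the strict margin $N\overline{l}\phi(t_\epsilon)-\underline{c}\phi(t_\epsilon)^2-\phi'(t_\epsilon)<0$ coming from the perturbation $\phi(0)\mapsto\phi(0)+\delta$.'' But by construction $\phi'\equiv N\overline{l}\phi-\underline{c}\phi^2$, so this quantity is \emph{identically zero}, not strictly negative; shifting the initial datum by $\delta$ changes the trajectory of $\phi$, not the ODE, and creates no margin in the equation. Consequently, at the (perturbed) touching point you only obtain $\caP_{i^*}u_{i^*}\leq\phi'$ with equality possible, and the $O(\epsilon)$ terms are not dominated by anything. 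Worse, tracking the nonlinearity more carefully shows that at a touching point where $u_{i^*}=\phi+\epsilon\Psi$ (with $\Psi=\sqrt{1+|x_\epsilon|^2}+\lambda t_\epsilon$) one gets $\caP_{i^*}u_{i^*}-\phi'\leq\underline{c}\,\epsilon\Psi\bigl(K-2\phi-\epsilon\Psi\bigr)$, which has the favorable sign only when $\phi\geq K/2$. With your choice $\phi(0)=M$ this fails whenever $M<K/2$, because then $\phi$ increases from below $K/2$ and the term $\epsilon\Psi(K-2\phi)$ is positive and of order $O(1)$, not $O(\epsilon)$, for $\Psi$ of order $1/\epsilon$. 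A fix is available (start $\phi$ at $\max(M,K)+\delta$, which keeps $\phi>K/2$ for all time and then delivers the bound $u_i\leq M+K$ and $\limsup u_i\leq K$ as $\delta\to 0$; also take the time drift $\lambda\epsilon t$ with $\lambda$ large rather than just $\epsilon t$), but as written the argument does not close. The paper's formulation sidesteps all of this because its comparison is triggered only in the region $u_i\geq K$, where the crucial sign $K-u_i\leq 0$ is automatic.
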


\begin{proof}
    By assumptions \ref{ass:smooth_periodic} and \ref{ass:KPP}, there exist constants
    $r,K>0$ such that, for any $\veu\geq\vez$,
    \begin{equation*}
	\veL\veu-\veC\veu\circ\veu\leq r\left( \veo^\upT\veu \right)\left( K\veo-\veu \right).
    \end{equation*}
    In particular, solutions $\veu$ of \eqref{sys:KPP}--\eqref{IC} satisfy
    $\dcbP\veu\leq r(\veo^\upT\veu)\left( K\veo-\veu \right)$, that is: 
    \begin{equation*}
	\caP_i u_i \leq r(K-u_i)\sum_{j=1}^N u_j\quad\text{for each }i\in[N].
    \end{equation*}
    Whenever $u_i\geq K$, $\caP_i u_i\leq ru_i(K-u_i)$.
    In particular, $\underline{u}_i=\max(u_i,K)$ is a sub-solution of the equation
    $\caP_i u = ru(K-u)$. Moreover it satisfies $\underline{u}_i(0,\cdot)\leq M$, where:
    \begin{equation*}
	M=\sup_{x\in\R^n}\max_{i\in[N]}u_{\upini}(x). 
    \end{equation*}

    Now, still for the equation $\caP_i u = ru(K-u)$, consider the space-homogeneous 
    (super-)solution $\overline{u}_i$ satisfying
    \begin{equation*}
	\begin{cases}
	    \caP_i\overline{u}_i=r\overline{u}_i(K-\overline{u}_i) & \text{in }(0,+\infty)\times\R^n, \\
	    \overline{u}_i(0,\cdot)=K+M & \text{in }\R^n.
	\end{cases}
    \end{equation*}

    By virtue of the scalar comparison principle \cite{Protter_Weinberger}, 
    $\underline{u}_i\leq\overline{u}_i$ globally in
    $[0,+\infty)\times\R^n$ for each $i\in[N]$. Since $\overline{u}_i\leq K+M$ and 
    $\lim_{t\to+\infty}\overline{u}_i=K$, this ends the proof.
\end{proof}

As a consequence, we obtain the following corollary on entire solutions. In particular,
this applies to space-time periodic solutions.

\begin{cor}\label{cor:global_bound_entire}
    All nonnegative globally bounded entire solutions $\veu$ of \eqref{sys:KPP} satisfy
    \begin{equation*}
	\veu\leq K\veo\quad\text{in }\R\times\R^n.
    \end{equation*}
\end{cor}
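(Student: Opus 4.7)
The plan is to reduce the entire-solution case to the absorbing-set estimate of Proposition \ref{prop:global_bound_absorbing_set} by re-initializing the Cauchy problem at an arbitrarily early time and exploiting the relaxation of the scalar super-solution toward $K$.

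First I would fix a nonnegative globally bounded entire solution $\veu$ and set
\begin{equation*}
    M=\sup_{(t,x)\in\R\times\R^n}\max_{i\in[N]}u_i(t,x),
\end{equation*}
which is finite by hypothesis. For any $\tau\in\R$, the restriction of $\veu$ to $[\tau,+\infty)\times\R^n$ is a solution of \eqref{sys:KPP} with bounded continuous initial datum $\veu(\tau,\cdot)\leq M\veo$ posed at time $\tau$. The proof of Proposition \ref{prop:global_bound_absorbing_set} is insensitive to a shift of the initial time --- the constant $K$ depends only on the global bounds on $\veL$ and $\veC$ furnished by \ref{ass:smooth_periodic}--\ref{ass:KPP}, and the scalar comparison principle is translation-invariant. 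Hence the same construction yields, for each $i\in[N]$, a space-homogeneous super-solution $\overline{u}_i^\tau$ of the scalar logistic equation $\caP_i u=ru(K-u)$ with value $K+M$ at time $\tau$, with $u_i\leq\overline{u}_i^\tau$ on $[\tau,+\infty)\times\R^n$.

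Since $\overline{u}_i^\tau$ depends only on $t$, the advection and diffusion terms in $\caP_i$ annihilate it and it reduces to the scalar ODE $\phi'=r\phi(K-\phi)$; thus $\overline{u}_i^\tau(t)=\phi(t-\tau)$, where $\phi$ is the logistic trajectory with $\phi(0)=K+M$. This $\phi$ decreases monotonically to $K$ as its argument tends to $+\infty$. Fixing any $(t,x)\in\R\times\R^n$ and any $i\in[N]$, and letting $\tau\to-\infty$ in the bound $u_i(t,x)\leq\phi(t-\tau)$, I obtain $u_i(t,x)\leq K$, which is the desired inequality.

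I do not expect any real obstacle: the corollary is essentially a repackaging of the absorbing-set structure already extracted from the proposition. The only point worth verifying is that the super-solution construction of Proposition \ref{prop:global_bound_absorbing_set} can be replayed at an arbitrary initial time in place of $0$, but this is transparent from its proof since no periodicity- or initial-time-dependent quantity enters the definition of $K$.
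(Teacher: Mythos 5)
Your proof is correct and is essentially the paper's intended argument: the corollary is stated there as an immediate consequence of Proposition \ref{prop:global_bound_absorbing_set}, obtained precisely by viewing the entire solution as a solution of the Cauchy problem re-initialized at arbitrarily early times $\tau$ and using that the space-homogeneous logistic super-solution relaxes to $K$, so that letting $\tau\to-\infty$ gives $\veu\leq K\veo$. Your observation that the constant $K$ and the comparison argument are insensitive to the choice of initial time is exactly the point that makes this reduction work.
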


\subsection{\red{Harnack inequality for linear cooperative systems}}

\red{For self-containment and ease of reading, we recall here the Harnack inequality 
that we proved with I. Mazari in \cite{Girardin_Mazari_2022}. It is a 
refinement of \FPH inequality \cite[Theorem 3.9]{Foldes_Polacik_2009}
for parabolic cooperative systems and Arapostathis--Ghosh--Marcus's Harnack inequality 
\cite[Theorem 2.2]{Araposthathis_} for elliptic cooperative systems.

We denote by $\sigma>0$ the smallest positive entry of $\overline{\veL}$ (\textit{cf.} \ref{ass:irreducible})
and by $K\geq 1$ the smallest positive number such that
\[
    K^{-1}\leq\min_{i\in[N]}\min_{y\in\Sn}\min_{(t,x)\in\clOmper}\left(y\cdot A_i(t,x)y\right),
\]
\[
    \max_{i\in[N]}\max_{y\in\Sn}\max_{(t,x)\in\clOmper}\left(y\cdot A_i(t,x)y\right)\leq K,
\]
\[
    \max_{i\in[N]}\max_{\alpha\in[n]}\max_{(t,x)\in\clOmper}|q_{i,\alpha}(t,x)|\leq K,
\]
\[
    \max_{i,j\in[N]}\sup_{(t,x)\in\clOmper}|l_{i,j}(t,x)|\leq K.
\]
The existence of $K$ is given by \ref{ass:ellipticity} and \ref{ass:smooth_periodic}.

\begin{prop}[Proposition 2.4 in \cite{Girardin_Mazari_2022}]\label{prop:harnack_inequality}
Let $\theta\geq\max\left(T,L_1,\dots,L_n\right)$ and $\vect{f}\in\mathcal{L}^\infty\cap\caC^{\delta/2,\delta}(\R\times\R^n,\R^N)$, 
where $\delta\in(0,1)$ is as in \ref{ass:smooth_periodic}. Let $F>0$ such that  
\[
    \max_{i\in[N]}\sup_{(t,x)\in\R\times\R^n}|f_i(t,x)|\leq F.
\]

There exists a constant $\overline{\kappa}_{\theta,F}>0$, determined only by $n$, $N$, $\sigma$, $K$ and the parameters
$\theta$ and $F$ such that, if $\veu\in\caC([-2\theta,6\theta]\times[-\frac{3\theta}{2},\frac{3\theta}{2}]^n,[\vez,\vei))$ 
is a solution of $\cbQ\veu=\diag(\vect{f})\veu$, then
\[
    \min_{i\in[N]}\min_{(t,x)\in[5\theta,6\theta]\times[-\frac{\theta}{2},\frac{\theta}{2}]^n}u_i(t,x)
    \geq \overline{\kappa}_{\theta,F}\max_{i\in[N]}\max_{(t,x)\in[0,2\theta]\times[-\frac{\theta}{2},\frac{\theta}{2}]^n}u_i(t,x).
\]
\end{prop}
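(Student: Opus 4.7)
The plan is to combine the scalar parabolic Harnack inequality, applied separately to each component, with a finite iteration that transfers positivity from one component to another along the paths provided by the irreducibility of $\overline{\veL}$. The main technical subtlety relative to the classical \FPH theorem is that the coupling matrix of the linearization is only irreducible \emph{on average}, never pointwise, so the mechanism propagating positivity across components must integrate over at least one period cell.

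First I would recast the system componentwise. Writing
\[
    \caP_i u_i - (l_{ii}+f_i)u_i = \sum_{j\neq i} l_{ij}u_j =: g_i,
\]
assumption \ref{ass:cooperative} and $\veu\geq\vez$ yield $l_{ij}\geq\underline{l}_{ij}\geq 0$ for $j\neq i$, hence $g_i\geq 0$; each $u_i$ is therefore a nonnegative solution of a scalar uniformly parabolic equation with coefficients bounded by $K+F$ and with nonnegative source $g_i$. Let $M$ denote the maximum of all $u_j$ over $[0,2\theta]\times[-\theta/2,\theta/2]^n$, attained at some $(t_1,x_1,i_1)$. By standard parabolic regularity controlled by $K$ and $F$, $u_{i_1}(t_1,\cdot)\geq M/2$ on a small ball around $x_1$. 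Aronson's two-sided Gaussian estimate for the fundamental solution of the scalar operator $\caP_{i_1}-(l_{i_1 i_1}+f_{i_1})$, combined with the nonnegativity of $g_{i_1}$ via Duhamel's formula, then gives a constant $\kappa_1>0$ depending only on $n$, $K+F$ and $\theta$ such that $u_{i_1}\geq\kappa_1 M$ on a forward cylinder of time length of order $\theta$ and spatial diameter of order $\theta$, translated from $(t_1,x_1)$.

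The next step is the inductive propagation along the irreducibility graph. By irreducibility of $\overline{\veL}$, any index $j$ is reachable from $i_1$ by a chain of at most $N-1$ edges $(j_m,j_{m+1})$ with $\overline{l}_{j_{m+1},j_m}>0$; periodicity together with the H\"older regularity of $\veL$ then guarantees that, inside any spacetime slab larger than one period cell, there is a sub-cylinder on which $l_{j_{m+1},j_m}$ is bounded below by a positive constant that depends only on $\veL$. Assuming by induction that $u_{j_m}\geq\kappa_m M$ on a large forward cylinder, we extract such a sub-cylinder and deduce
\[
    \caP_{j_{m+1}}u_{j_{m+1}}-(l_{j_{m+1},j_{m+1}}+f_{j_{m+1}})u_{j_{m+1}}\geq l_{j_{m+1},j_m}u_{j_m}\geq c\,\kappa_m M.
\]
Applying Duhamel's formula with the Gaussian lower bound on the scalar fundamental solution again, we obtain $u_{j_{m+1}}\geq\kappa_{m+1}M$ on a slightly later forward cylinder. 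After at most $N-1$ such steps, every component is bounded below by $\overline{\kappa}_{\theta,F}M$ on a cylinder contained in $[5\theta,6\theta]\times[-\theta/2,\theta/2]^n$, with $\overline{\kappa}_{\theta,F}$ the product of the $\kappa_m$ accumulated along the longest chain.

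The main obstacle will be the geometric bookkeeping: the successive forward cylinders produced along the chain must stay inside $[-2\theta,6\theta]\times[-3\theta/2,3\theta/2]^n$ and eventually cover $[5\theta,6\theta]\times[-\theta/2,\theta/2]^n$, uniformly over all chains of length at most $N-1$. This is precisely what the scale assumption $\theta\geq\max(T,L_1,\dots,L_n)$ and the particular choice of the spacetime cylinders in the statement are designed to accommodate: each propagation step needs a spacetime slab of size comparable to one period cell to extract a uniform lower bound on the relevant coupling coefficient from the merely average irreducibility of $\overline{\veL}$, and the buffers of $3\theta$ in time and $\theta$ in each spatial direction leave just enough room for the $N-1$ successive propagations.
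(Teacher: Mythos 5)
Before assessing your argument, note a point of comparison: the paper itself contains no proof of Proposition \ref{prop:harnack_inequality}. It is quoted verbatim from the companion work \cite{Girardin_Mazari_2022}, where it is presented as a refinement of the Harnack inequalities of F\"{o}ldes--Pol\'{a}\v{c}ik and of Arapostathis--Ghosh--Marcus, so there is no internal proof to measure your proposal against. Your overall architecture --- view each component as a nonnegative solution of a scalar parabolic equation with nonnegative source $g_i=\sum_{j\neq i}l_{i,j}u_j$, then propagate positivity between components along chains supplied by the irreducibility of $\overline{\veL}$, using space-time periodicity to locate regions where the relevant coupling coefficient is uniformly positive --- is indeed the natural mechanism behind such system Harnack inequalities and is the right starting point.

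Two steps, however, are genuinely gapped. First, the seeding step ``by standard parabolic regularity, $u_{i_1}(t_1,\cdot)\geq M/2$ on a small ball around $x_1$'' is not justified: interior regularity estimates for $u_{i_1}$ are controlled by the supremum of the solution (and of the source $g_{i_1}$, hence of the \emph{other} components) over a larger cylinder, and nothing bounds that supremum by $M$, which is only the maximum over the small early cylinder $[0,2\theta]\times[-\theta/2,\theta/2]^n$. Consequently the radius of the ball on which $u_{i_1}\geq M/2$ cannot be taken to depend only on $n,N,\sigma,K,\theta,F$, and your Duhamel lower bound has no admissible seed. The standard repair is the scalar weak Harnack inequality for nonnegative supersolutions combined with a local maximum principle, and closing that loop (the local maximum estimate for one component carries the other components as a source) is precisely the delicate point in the F\"{o}ldes--Pol\'{a}\v{c}ik-type proofs; your sketch skips it. Second, the ``geometric bookkeeping'' you defer is not mere bookkeeping: since the coupling coefficients are only positive somewhere in the period cell, each transfer $j_m\to j_{m+1}$ requires the Duhamel time window to contain the next activation time of $l_{j_{m+1},j_m}$, which in the worst case means waiting up to a full period $T$ per step; a chain of length $N-1$ then costs on the order of $(N-1)T$, whereas the statement provides a fixed gap of $3\theta$ (possibly equal to $3T$) between the two cylinders, independently of $N$. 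Your closing claim that the buffers ``leave just enough room for the $N-1$ successive propagations'' is therefore exactly the assertion that needs proof, and the naive sequential chain does not deliver it when $N$ is large and $T$ is comparable to $\theta$. A smaller issue of the same kind: the size of the sub-cylinder on which $l_{j_{m+1},j_m}\geq\sigma/2$ depends on the modulus of continuity of $\veL$, not only on $\sigma$ and $K$, so the advertised dependence of $\overline{\kappa}_{\theta,F}$ on the listed parameters also needs attention.
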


We will apply repeatedly this Proposition in what follows. The global boundedness estimates of Proposition
\ref{prop:global_bound_absorbing_set} will be key since they make it possible to set $\vect{f}=\veC\veu$.}

\subsection{\red{Comparison principle for semilinear cooperative systems}}

\red{Similarly, we recall below a form of strong comparison principle for semilinear systems. 
It can be derived easily as a consequence of the classical result of Protter--Weinberger
\cite[Chapter 3, Theorem 13]{Protter_Weinberger} and of 
the above Harnack inequality; the proof is deliberately not detailed.

Note that the assumptions below could be relaxed in several ways; more general statements can be found in the literature. 
Here we only state with minimal verbosity a comparison principle sufficient for our purposes.

\begin{prop}\label{prop:comparison_principle}
    Let $f_1,f_2,\dots,f_N\in\caC^1(\R,\caC^{\delta/2,\delta}_{\upp}(\R\times\R^n,\R))$, where (with a slight abuse of notation)
    $f_i:v\mapsto \left[(t,x)\mapsto f_i(t,x,v)\right]$ for each $i\in[N]$ and where $\delta\in(0,1)$ is as in 
    \ref{ass:smooth_periodic}. 

    Let $\overline{\veu},\underline{\veu}\in\caC^{1,2}((0,+\infty)\times\R^n,[\vez,\vei))\cap\caC_b([0,+\infty)\times\R^n)$ 
    such that
    \begin{equation*}
        \begin{cases}
	    \cbQ\overline{\veu}\geq\left(f_i(\overline{u}_i)\right)_{i\in[N]} & \text{in }(0,+\infty)\times\R^n, \\
	    \cbQ\underline{\veu}\leq\left(f_i(\underline{u}_i)\right)_{i\in[N]} & \text{in }(0,+\infty)\times\R^n, \\
	    \underline{\veu}(0,\cdot)\leq\overline{\veu}(0,\cdot) & \text{in }\R^n.
        \end{cases}
    \end{equation*}
    
    Then $\underline{\veu}\leq\overline{\veu}$ globally in $[0,+\infty)\times\R^n$. 
    
    Furthermore, if there exist $t^\star>T$, $x^\star\in\R^n$ and $i^\star\in[N]$ such that 
    $\overline{u}_{i^\star}(t^\star,x^\star)=\underline{u}_{i^\star}(t^\star,x^\star)$, then 
    $\underline{\veu}=\overline{\veu}$ globally in $[0,+\infty)\times\R^n$.
\end{prop}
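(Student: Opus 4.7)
The plan is to reduce the statement to a linear cooperative inequality on the difference $\vew:=\overline{\veu}-\underline{\veu}$, and then to combine the classical weak comparison of Protter--Weinberger with the scalar strong maximum principle and the irreducibility assumption~\ref{ass:irreducible}. First I would linearize: the mean value theorem applied to each $f_i$ in its last argument (valid since each $f_i$ is $\caC^1$ in $v$ and both $\overline{\veu},\underline{\veu}$ are bounded) yields bounded measurable coefficients $b_i(t,x)$ with $f_i(t,x,\overline{u}_i)-f_i(t,x,\underline{u}_i)=b_i(t,x)w_i$. Subtracting the two inequalities in the statement then gives
\[
    \cbQ\vew\geq\diag(b_i)\vew\quad\text{in }(0,+\infty)\times\R^n,\qquad \vew(0,\cdot)\geq\vez,
\]
and the essential nonnegativity of $\veL$ provided by~\ref{ass:cooperative} makes this a bounded, nonnegative super-solution of a linear cooperative parabolic system on $\R^n$. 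The classical weak comparison of Protter--Weinberger directly delivers $\vew\geq\vez$ globally in $[0,+\infty)\times\R^n$, which is the first assertion.

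For the strong part, assume $w_{i^\star}(t^\star,x^\star)=0$ with $t^\star>T$. The $i^\star$-th component of the linearized inequality reads
\[
    \caP_{i^\star}w_{i^\star}-(l_{i^\star,i^\star}+b_{i^\star})w_{i^\star}\geq\sum_{j\neq i^\star}l_{i^\star,j}w_j\geq 0,
\]
so that $w_{i^\star}\geq 0$ satisfies a scalar parabolic inequality and attains the minimum value $0$ at the interior point $(t^\star,x^\star)$. The scalar strong maximum principle therefore forces $w_{i^\star}\equiv 0$ on the whole parabolic past cone $[0,t^\star]\times\R^n$, and reinjection yields $l_{i^\star,j}w_j\equiv 0$ on that set for every $j\neq i^\star$. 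The irreducibility of $\overline{\veL}$ from~\ref{ass:irreducible} supplies, for each $j\in[N]$, a chain $i^\star=i_0,\dots,i_k=j$ with $\overline{l}_{i_{m-1},i_m}>0$. At each step, continuity and space-time periodicity of $l_{i_{m-1},i_m}$, together with the key condition $t^\star>T$, guarantee an open set $U\subset(0,t^\star)\times\R^n$ with $\sup_U t$ arbitrarily close to $t^\star$ on which $l_{i_{m-1},i_m}>0$; consequently $w_{i_m}$ vanishes on $U$, and a further application of the scalar strong maximum principle propagates this vanishing to $[0,\sup_U t]\times\R^n$. Continuity of $w_{i_m}$ at $t=t^\star$ then upgrades the zero to $[0,t^\star]\times\R^n$, and iterating along the chain one obtains $\vew\equiv\vez$ on $[0,t^\star]\times\R^n$.

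In particular $\overline{\veu}(0,\cdot)=\underline{\veu}(0,\cdot)$. The global equality $\overline{\veu}=\underline{\veu}$ on $[0,+\infty)\times\R^n$ then follows from the forward uniqueness of the bounded Cauchy problem for the cooperative semilinear system (standard under our $\caC^1$ reaction), together with the sandwich $\underline{\veu}\leq\hat{\veu}\leq\overline{\veu}$ around its unique classical solution $\hat{\veu}$. The hard part is really the interplay between the pointwise scalar strong maximum principle and the \emph{on-average} irreducibility of $\veL$: the assumption $t^\star>T$ is exactly what converts the time-periodic but pointwise possibly vanishing off-diagonal entries of $\veL$ into open sets on which the cooperative coupling is genuinely active, allowing the zero to be transported inductively through all components and all of $[0,t^\star]\times\R^n$. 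The remaining ingredients---the mean value linearization, the scalar Protter--Weinberger comparison, and the vectorization via essential nonnegativity---are routine and are deliberately left to the reader by the author.
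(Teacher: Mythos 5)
Your reduction of the weak half to a linear cooperative inequality $\cbQ\vew\geq\diag(b_i)\vew$ for $\vew=\overline{\veu}-\underline{\veu}$ via the mean value theorem, followed by Protter--Weinberger's weakly coupled comparison on $\R^n$ (Phragm\'{e}n--Lindel\"{o}f, legitimate since $\vew$ is bounded and the coefficients $b_i$ and $l_{i,j}$ are), is correct; this is presumably what the paper has in mind when it invokes Protter--Weinberger.

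The strong half has two genuine gaps. First, space--time periodicity of $l_{i_{m-1},i_m}$ and $t^\star>T$ do \emph{not} produce open sets $U\subset(0,t^\star)\times\R^n$ with $\sup_U t$ arbitrarily close to $t^\star$ on which $l_{i_{m-1},i_m}>0$: all periodicity gives is that $\{l_{i_{m-1},i_m}>0\}$ meets $(t^\star-T,t^\star]\times\R^n$ somewhere, so the scalar strong maximum principle yields $w_{i_m}\equiv 0$ only on $(0,\tau_m]\times\R^n$ for some $\tau_m\in(t^\star-T,t^\star]$, and continuity cannot bridge $(\tau_m,t^\star]$ when $l_{i_{m-1},i_m}(t,\cdot)\equiv 0$ for $t$ near $t^\star$ --- precisely the out-of-phase scenario that \ref{ass:irreducible} is designed to permit. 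Each further link of the chain can shave off another period, so after $k$ links the vanishing is secured only on $(0,t^\star-kT]$, which is vacuous when $t^\star$ is close to $T$. A concrete obstruction: $N=2$, $l_{1,2}(t,\cdot)$ positive only for $t\in\bigcup_m(mT,mT+T/2)$, $l_{2,1}(t,\cdot)$ positive only for $t\in\bigcup_m(mT+T/2,(m+1)T)$, $t^\star=7T/4$; then $w_1\equiv 0$ on $(0,t^\star]$ forces $w_2\equiv 0$ only on $(0,3T/2]$, and a smooth bounded nonnegative supersolution of the linearized system with $w_2>0$ on $(3T/2,t^\star]\times\R^n$ exists. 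Second, the jump to equality on all of $[0,+\infty)\times\R^n$ via ``forward uniqueness of the bounded Cauchy problem'' is unfounded: $\overline{\veu},\underline{\veu}$ are only a super-/sub-solution pair, not solutions, so uniqueness of the Cauchy problem says nothing about them, and the sandwich $\underline{\veu}\leq\hat{\veu}\leq\overline{\veu}$ merely repeats the weak conclusion; a supersolution may detach from an equal subsolution at any time by sharply increasing one component. Consistent with this, the paper only ever applies the strong part on $[T,t^\star]\times\R^n$ (proof of Theorem~\ref{thm:extinction}), so the displayed conclusion on $[0,+\infty)\times\R^n$ is best read as a slip for $[0,t^\star]\times\R^n$. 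The route the paper gestures at also sidesteps your first gap: one applies the cooperative Harnack inequality of Proposition~\ref{prop:harnack_inequality}, which already couples all $N$ components across a single window of length $\theta\geq\max(T,L_1,\dots,L_n)$, to the solution of the linearized Cauchy problem initiated from $\vew(\tau,\cdot)$ (which is dominated by $\vew$), forcing $\vew(\tau,\cdot)=\vez$ once $t^\star-\tau$ is a suitable multiple of $\theta$, and then carries the vanishing back to $t=0$ componentwise by the scalar strong maximum principle.
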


Below we state a result useful for manipulating unbounded super- or sub-solutions.

\begin{prop}\label{prop:generalized_comparison_principle}
    Let $f_1,f_2,\dots,f_N\in\caC^1(\R,\caC^{\delta/2,\delta}_{\upp}(\R\times\R^n,\R))$, where (with a slight abuse of notation)
    $f_i:v\mapsto \left[(t,x)\mapsto f_i(t,x,v)\right]$ for each $i\in[N]$ and where $\delta\in(0,1)$ is as in 
    \ref{ass:smooth_periodic}. 

    Let $\overline{\vev},\overline{\vew},\underline{\vev},\underline{\vew}\in
    \caC^{1,2}((0,+\infty)\times\R^n,[\vez,\vei))\cap\caC([0,+\infty)\times\R^n)$, possibly unbounded.
    Let
    \begin{equation*}
	\overline{\veu}=\left( \min(\overline{v}_i,\overline{w}_i) \right)_{i\in[N]},\quad
	\underline{\veu}=\left( \max(\underline{v}_i,\underline{w}_i) \right)_{i\in[N]}.
    \end{equation*}
    Assume:
    \begin{equation*}
        \begin{cases}
	    \cbQ\overline{\vev}\geq\left(f_i(\overline{v}_i)\right)_{i\in[N]} & \text{in }(0,+\infty)\times\R^n, \\
	    \cbQ\overline{\vew}\geq\left(f_i(\overline{w}_i)\right)_{i\in[N]} & \text{in }(0,+\infty)\times\R^n, \\
	    \cbQ\underline{\vev}\leq\left(f_i(\underline{v}_i)\right)_{i\in[N]} & \text{in }(0,+\infty)\times\R^n, \\
	    \cbQ\underline{\vew}\leq\left(f_i(\underline{w}_i)\right)_{i\in[N]} & \text{in }(0,+\infty)\times\R^n, \\
	    \underline{\veu}(0,\cdot)\leq\overline{\veu}(0,\cdot) & \text{in }\R^n, \\
	    \underline{\veu},\overline{\veu}\in\caC_b([0,+\infty)\times\R^n).
        \end{cases}
    \end{equation*}

    Then the conclusions of Proposition \ref{prop:comparison_principle} remain true.
\end{prop}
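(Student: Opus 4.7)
The plan is to leverage the cooperativity of $\cbQ$: the componentwise minimum of two smooth super-solutions of a cooperative semilinear parabolic system is again a generalized super-solution, and dually for the maximum of sub-solutions. Combined with a regularization by smooth approximations of $\min$ and $\max$, this reduces the statement to Proposition \ref{prop:comparison_principle}.

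First, I would verify that $\overline{\veu}$ is a viscosity super-solution and $\underline{\veu}$ a viscosity sub-solution of $\cbQ\veu=\left(f_i(u_i)\right)_{i\in[N]}$. Let $\varphi\in\caC^{1,2}$ be a test function touching $\overline{u}_i$ from below at a point $(t_0,x_0)$, say with $\overline{u}_i(t_0,x_0)=\overline{v}_i(t_0,x_0)\leq\overline{w}_i(t_0,x_0)$. Then $\varphi$ also touches $\overline{v}_i$ from below at $(t_0,x_0)$, and the ellipticity of $A_i$ gives $\caP_i\varphi(t_0,x_0)\geq\caP_i\overline{v}_i(t_0,x_0)$. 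Since $\overline{u}_j\leq\overline{v}_j$ and $l_{i,j}\geq 0$ for $j\neq i$ by \ref{ass:cooperative}, the super-solution property of $\overline{\vev}$ yields
\begin{equation*}
\caP_i\varphi(t_0,x_0)-\sum_{j\in[N]}l_{i,j}(t_0,x_0)\overline{u}_j(t_0,x_0)\geq f_i(t_0,x_0,\overline{u}_i(t_0,x_0)).
\end{equation*}
A symmetric argument with test functions touching $\underline{u}_i$ from above handles $\underline{\veu}$.

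Second, I would regularize by a smooth approximation of $\min$ from below, for instance $h_\varepsilon(a,b)=-\varepsilon\log(\upe^{-a/\varepsilon}+\upe^{-b/\varepsilon})$, which is concave in each variable. Setting $\overline{u}_i^\varepsilon=h_\varepsilon(\overline{v}_i,\overline{w}_i)$ yields a $\caC^{1,2}$ function converging locally uniformly to $\overline{u}_i$ from below as $\varepsilon\to 0$, and a dual smoothing of $\max$ produces $\underline{u}_i^\varepsilon\geq\underline{u}_i$. Exploiting the concavity of $h_\varepsilon$, the cooperativity of $\cbQ$, and the $\caC^1$ dependence of the $f_i$ on $v$, a direct chain rule computation shows that $\overline{\veu}^\varepsilon$ and $\underline{\veu}^\varepsilon$ are smooth super- and sub-solutions of the perturbed system where each $f_i$ is shifted by an error $\eta_i^\varepsilon\to 0$ locally uniformly. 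Applying Proposition \ref{prop:comparison_principle} to the perturbed system and letting $\varepsilon\to 0$ yields $\underline{\veu}\leq\overline{\veu}$. For the strict comparison at a contact point $(t^\star,x^\star,i^\star)$ with $t^\star>T$, one identifies the smooth pair (among $\overline{v}_{i^\star},\overline{w}_{i^\star}$ and $\underline{v}_{i^\star},\underline{w}_{i^\star}$) realizing the contact, applies the strong comparison part of Proposition \ref{prop:comparison_principle} on a small neighborhood, and propagates the equality globally by a standard connectedness argument combined with Proposition \ref{prop:harnack_inequality}.

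The main obstacle is the regularization step: controlling the error terms $\eta_i^\varepsilon$ uniformly despite possibly unbounded $\overline{\vev},\overline{\vew},\underline{\vev},\underline{\vew}$, and ensuring that $\overline{\veu}^\varepsilon$ and $\underline{\veu}^\varepsilon$ stay close enough to their limits on the whole space-time domain to legitimately invoke Proposition \ref{prop:comparison_principle}, whose statement is formulated on bounded data. An alternative and cleaner route would be to invoke directly a standard comparison principle for bounded continuous viscosity sub- and super-solutions of cooperative semilinear parabolic systems, thereby bypassing any explicit regularization.
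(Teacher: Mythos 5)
Your proposal follows a genuinely different route from the paper. The paper shows that $\overline{\veu}$ (resp.\ $\underline{\veu}$) satisfies the defining integral inequality of a generalized super- (resp.\ sub-) solution in a weak Sobolev sense: it verifies the classical differential inequality on the open sets $\{\overline{u}_i=\overline{v}_i\}$ and $\{\overline{u}_i=\overline{w}_i\}$ using the cooperativity $l_{i,j}\geq 0$ for $j\neq i$, and then checks by integration by parts that the jump term $\int\varphi\, A_i\nabla(\overline{u}_i^--\overline{u}_i^+)\cdot\nu^+$ across the interface has the right sign, via the geometric observation that the minimum pinches the gradient difference into $\R^+\nu^+$ together with positive definiteness of $A_i$. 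You instead cast $\overline{\veu},\underline{\veu}$ as viscosity super/sub-solutions and propose to reduce to Proposition \ref{prop:comparison_principle} by soft-min/soft-max smoothing, or alternatively to invoke a viscosity comparison principle directly. Your pointwise test-function computation is correct and is precisely the viscosity analogue of the paper's calculation on the open sets; and the concavity of $h_\varepsilon$ plays the same role as the sign of the gradient jump plays in the paper.

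There is, however, a genuine gap in the regularization step, which you yourself flag. To apply Proposition \ref{prop:comparison_principle} to the smoothed pair, the error $\eta_i^\varepsilon$ must be controlled uniformly in $(t,x)$, and the dominant contribution is
\begin{equation*}
    \partial_a h_\varepsilon(\overline{v}_i,\overline{w}_i)\,f_i(\overline{v}_i)
    +\partial_b h_\varepsilon(\overline{v}_i,\overline{w}_i)\,f_i(\overline{w}_i)
    -f_i(\overline{u}_i^\varepsilon).
\end{equation*}
Where, say, $\overline{v}_i\geq\overline{w}_i$ and both are comparable to $\|\overline{\veu}\|_\infty$, this is $O(\varepsilon)$ by Lipschitz continuity of $f_i$ on a fixed bounded interval. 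But where $\overline{v}_i$ is large (which is allowed, since only $\overline{\veu}$, not $\overline{\vev}$, is assumed bounded), the soft-min weight on $\overline{v}_i$ is $\exp(-(\overline{v}_i-\overline{w}_i)/\varepsilon)$: small but not zero, multiplied by $f_i(\overline{v}_i)$ at a possibly arbitrarily large argument. The product vanishes pointwise as $\varepsilon\to 0$ but not uniformly unless one also assumes sub-exponential growth of $f_i$, which the statement does not. Since Proposition \ref{prop:comparison_principle} as stated does not tolerate error in the super-solution inequality, this step is not complete as written; one would need either a localized version (exploit the boundedness of $\overline{\veu}$, truncate $\overline{\vev},\overline{\vew}$ at $2\|\overline{\veu}\|_\infty$ before smoothing, and check the truncation preserves the super-solution property) or a Gronwall-type variant of Proposition \ref{prop:comparison_principle} that absorbs $O(\varepsilon)$ errors. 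Your fallback --- invoking a standard viscosity comparison principle for cooperative semilinear parabolic systems --- is correct and would indeed sidestep the whole issue, but it is a tool outside the paper's stated toolbox and would need a precise reference or proof. The strong comparison part of your proposal is also somewhat sketchy (the pair of smooth functions realizing the contact need not satisfy the initial ordering or boundedness required by Proposition \ref{prop:comparison_principle}), but the paper is equally terse on this point, so this is less of a concern.
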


\begin{proof}
    We only prove that $\overline{\veu}$ is a so-called generalized super-solution, namely it satisfies
    the differential inequality $\cbQ\overline{\veu}\geq(f_i(\overline{u}_i))_{i\in[N]}$ in some weak Sobolev sense.
    Then it follows from symmetric arguments that $\underline{\veu}$ is a generalized sub-solution,
    and then the conclusion follows by applying an appropriate version of the maximum principle in Sobolev spaces.

    Let $i\in[N]$. Let $(t,x)$ such that $\overline{u}_i(t',x')=\overline{v}_i(t',x')$ for all $(t',x')$ in an open
    neighborhood of $(t,x)$. Then:
    \begin{align*}
	(\caP_i\overline{u}_i)(t,x) & = (\caP_i\overline{v}_i)(t,x) \\
	& \geq \sum_{j=1}^N l_{i,j}(t,x)\overline{v}_j(t,x)+f_i(t,x,\overline{v}_i(t,x)) \\
	& \geq \sum_{j\in[N]\backslash\{i\}} l_{i,j}(t,x)\overline{v}_j(t,x)+l_{i,i}(t,x)\overline{v}_i(t,x)+f_i(t,x,\overline{v}_i(t,x)) \\
	& \geq \sum_{j\in[N]\backslash\{i\}} l_{i,j}(t,x)\min(\overline{v}_j,\overline{w}_j)(t,x)+l_{i,i}(t,x)\overline{u}_i(t,x)+f_i(t,x,\overline{u}_i(t,x)) \\
	& \geq \sum_{j=1}^N l_{i,j}(t,x)\overline{u}_j(t,x)+f_i(t,x,\overline{u}_i(t,x))
    \end{align*}
    where the essential nonnegativity of $\veL$, \textit{cf.} \ref{ass:cooperative}, was used to deal with the possibility
    that there exist $j_1,j_2\in[N]\backslash\{i\}$ such that $\overline{u}_{j_1}(t,x)=\overline{v}_{j_1}(t,x)$ while
    $\overline{u}_{j_2}(t,x)=\overline{w}_{j_2}(t,x)$.

    By reversing the roles of $\overline{\vev}$ and $\overline{\vew}$, we obtain the same differential inequality in open
    space-time subsets where $\overline{u}_i=\overline{w}_i$. Since 
    \begin{equation*}
	\{ (t,x)\in(0,+\infty)\times\R^n\ |\ \overline{u}_i=\overline{v}_i\}
	\cup
	\{ (t,x)\in(0,+\infty)\times\R^n\ |\ \overline{u}_i=\overline{w}_i\}
	=(0,+\infty)\times\R^n,
    \end{equation*}
    the same differential inequality is actually true in the open set
    \begin{equation*}
	\left[(0,+\infty)\times\R^n\right]\backslash\left[ \partial\{\overline{u}_i=\overline{v}_i\}\cap\partial\{\overline{u}_i=\overline{w}_i\} \right].
    \end{equation*}

    Since this is true for any $i\in[N]$, the inequality $\cbQ\overline{\veu}\geq(f_i(\overline{u}_i))_{i\in[N]}$
    is true (in classical sense) in the open set
    \begin{equation*}
	\left[(0,+\infty)\times\R^n\right]\backslash\bigcup_{i=1}^N\left[ \partial\{\overline{u}_i=\overline{v}_i\}\cap\partial\{\overline{u}_i=\overline{w}_i\} \right].
    \end{equation*}

    The remaining closed set contains, in general, points where $\overline{u}_i$ is only Lipschitz-continuous. It
    is dealt with arguments on weak derivatives in Sobolev spaces, exactly as in the proof of the
    analogous result for the scalar comparison principle. This is quite standard and therefore not
    fully detailed here. We only consider the case where the intersection between the closed set
    and an open ball $B_0=B\left( \left( t_0,x_0 \right),r_0 \right)$ reduces to a portion
    of graph $\{ (t,x(t))\ |\ t\in(t_0-\tau,t_0+\tau) \}$ of class $\caC^1$ that divides the ball into two parts,
    the supergraph $B_0^+$ and the subgraph $B_0^-$. Noting that the first-order
    partial derivatives of $\overline{\veu}$ are well-defined in Lebesgue spaces, we have to 
    verify that, for any smooth scalar nonnegative test function $\varphi$ supported in $B_0$ and any $i\in[N]$,
    \begin{equation}\label{eq:generalized_supersolution}
	\int_{B_0}\partial_t\overline{u}_i\varphi
	+\int_{B_0}A_i\nabla\overline{u}_i\cdot\nabla\varphi
	+\int_{B_0}\left(q_i\cdot\nabla\overline{u}_i\right) \varphi
	-\int_{B_0}\left( \sum_{j=1}^N l_{i,j}\overline{u}_j+f_i(\overline{u}_i) \right)\varphi
	\geq 0
    \end{equation}

    By integration by parts,
    \begin{equation*}
	\int_{B_0^+}A_i\nabla\overline{u}_i\cdot\nabla\varphi = -\int_{B_0^+}\nabla\cdot(A_i\nabla\overline{u}_i)\varphi+\int_{\{(t,x(t)\}}\varphi A_i\nabla\overline{u}_i^+\cdot\nu^-,
    \end{equation*}
    \begin{equation*}
	\int_{B_0^-}A_i\nabla\overline{u}_i\cdot\nabla\varphi = -\int_{B_0^-}\nabla\cdot(A_i\nabla\overline{u}_i)\varphi+\int_{\{(t,x(t)\}}\varphi A_i\nabla\overline{u}_i^-\cdot\nu^+,
    \end{equation*}
    where $\nu^\pm$ denotes the normal to the graph of $t\mapsto x(t)$ pointing in $B_0^\pm$
    and $\overline{u}_i^\pm$ denotes the function of class $\caC^{1,2}$, either $\overline{v}_i$ or
    $\overline{w}_i$, that equals $\overline{u}_i$ in $B_0^\pm$.

    Hence, using the differential inequalities satisfied classically in the open sets 
    $\operatorname{int}(B_0^+)$ and $\operatorname{int}(B_0^-)$ as well as the relation 
    $\nu^+=-\nu^-$, \eqref{eq:generalized_supersolution} amounts to
    \begin{equation*}
	\int_{\{t,x(t)\}}\varphi A_i\nabla(\overline{u}_i^--\overline{u}_i^+)\cdot\nu^+\geq 0.
    \end{equation*}

    Up to reducing the radius $r_0$, there are two cases:
    \begin{itemize}
	\item either $i\in[N]$ is such that $\overline{u}_i^-=\overline{u}_i^+$ in $B_0$;
	\item or $i\in[N]$ is such that, in $B_0$, $\overline{u}_i^-$ and $\overline{u}_i^+$ coincide only on the graph 
	    $\{ (t,x(t))\ |\ t\in(t_0-\tau,t_0+\tau) \}$.
    \end{itemize}

    In the first case, $\nabla(\overline{u}_i^--\overline{u}_i^+)=0$ and \eqref{eq:generalized_supersolution} is obvious.

    In the second case, the graph of $t\mapsto x(t)$ can be understood as the $0$-level line of the function 
    $\overline{u}_i^--\overline{u}_i^+$. By regularity, we deduce that $\nabla(\overline{u}_i^--\nabla\overline{u}_i^+)\in\R\nu^+$
    in $\{(t,x(t))\}$. 
    From the definition of $\overline{u}_i$ as the minimum between $\overline{u}_i^-$ and $\overline{u}_i^+$, 
    we deduce more precisely that $\nabla(\overline{u}_i^--\overline{u}_i^+)\in\R^+\nu^+$ in $\{(t,x(t))\}$.
    The conclusion follows from the symmetry and positive definiteness of $A_i$ as well as the nonnegativity
    of the test function $\varphi$.
\end{proof}

We emphasize again that, due to the positivity of $\veC$ (\textit{cf.} \ref{ass:KPP}), the system \eqref{sys:KPP}
does not satisfy the assumptions of Proposition \ref{prop:comparison_principle}, and the global comparison principle turns 
out to be false
except in the special case $N=1$. Localized comparison principles, exploiting the fact that, at first-order, the reaction
term is cooperative around $\veu=\vez$ and competitive near $\veu=\vei$, were used in \cite{Cantrell_Cosner_Yu_2018}.
However the method in \cite{Cantrell_Cosner_Yu_2018} seems mostly limited to the special case $N=2$.
In this paper, we follow a different method, whose validity does not depend on $N$ at all.
}

\subsection{\red{Comparison between the components in present-time}}

\red{Here we introduce a nonlinear comparison between the components of the solution $\veu$ of \eqref{sys:KPP}--\eqref{IC}
away from the initial time. 

It relies upon Gaussian estimates for solutions of linear cooperative systems and more precisely on the following lemma, 
inspired by a work on nonlocal equations with a KPP structure \cite{Bouin_Henderso-1}.}

\begin{lem}\label{lem:comparison_of_components_in_real_time_linear}
    There exists $p\in(0,1)$ and $\kappa>0$ such that, for any solution $\vev$ of 
    the linear Cauchy problem
    \begin{equation*}
	\begin{cases}
	    \cbQ\vev=\vez & \text{in }(0,+\infty)\times\R^n, \\
	    \vev(0,\cdot)=\vev_{\upini} & \text{in }\R^n,
	\end{cases}
    \end{equation*}
    with $\vev_{\upini}\in\mathcal{L}^\infty(\R^n,[\vez,\vei))$,
    the following inequalities hold true:
    \begin{equation*}
	\forall i,j\in[N]\quad v_j(1,x)\leq \kappa \|\vev_{\upini}\|_{\mathcal{L}^\infty(\R^n,\R^N)}^{1-p} v_i(1,x)^p\quad\text{for all }x\in\R^n.
    \end{equation*}
\end{lem}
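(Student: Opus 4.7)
The plan is to combine two-sided Gaussian estimates on the Green's matrix of the linear cooperative system with a Hölder interpolation, following an idea used in \cite{Bouin_Henderso-1} for a nonlocal equation with a KPP structure. As a preliminary, an application of Proposition \ref{prop:comparison_principle} with the spatially constant supersolution $(t,x) \mapsto \|\vev_{\upini}\|_{\mathcal{L}^\infty(\R^n,\R^N)} e^{\Lambda t} \veo$, where $\Lambda$ is larger than every row sum of $\veL$ over $\clOmper$, yields a uniform bound $\|\vev(1, \cdot)\|_{\mathcal{L}^\infty} \leq C_0 \|\vev_{\upini}\|_{\mathcal{L}^\infty}$ for some constant $C_0 > 0$ depending only on the system.

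Next, let $\vect{G} = (G_{ik})_{i,k \in [N]}$ denote the fundamental matrix of $\cbQ\vev = \vez$ on $[0,+\infty)\times\R^n$, so that $\vev(1, x) = \int_{\R^n} \vect{G}(1, x; 0, y)\,\vev_{\upini}(y)\,\upd y$. The main step is to establish two-sided Gaussian bounds: there exist constants $c_\upp, C_\upp > 0$ and $0 < C_2 < C_1$ such that, for all $i, k \in [N]$ and $x, y \in \R^n$,
\begin{equation*}
c_\upp e^{-C_1|x-y|^2} \leq G_{ik}(1, x; 0, y) \leq C_\upp e^{-C_2|x-y|^2}.
\end{equation*}
The upper bound follows from classical Aronson-type heat kernel estimates, applied component by component via the relation $\caP_i v_i - l_{ii} v_i = \sum_{k \neq i} l_{ik} v_k$, Duhamel's formula and the preliminary $\mathcal{L}^\infty$-bound. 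The lower bound essentially uses the irreducibility of $\overline{\veL}$ from \ref{ass:irreducible}: for any pair $(i, k)$ there is a chain of indices $k = k_0 \to k_1 \to \dots \to k_m = i$ along which the off-diagonal entries of $\veL$ are positive on a nonempty open space-time set, and convolving the scalar Aronson lower bounds through this chain via an iterated Duhamel representation preserves Gaussian decay up to modification of constants; alternatively, the Harnack-type Proposition \ref{prop:harnack_inequality} can be invoked on well-chosen parabolic cylinders to transfer positivity between components and yield the same bound.

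With these Gaussian estimates in hand, set $g(y) = \sum_{k=1}^N v_{\upini, k}(y)$, so that $g \in [0, N\|\vev_{\upini}\|_{\mathcal{L}^\infty}]$ and
\begin{equation*}
v_i(1, x) \geq c_\upp \int_{\R^n} e^{-C_1|x-y|^2} g(y)\,\upd y, \qquad v_j(1, x) \leq C_\upp \int_{\R^n} e^{-C_2|x-y|^2} g(y)\,\upd y.
\end{equation*}
Fix any $p \in (0, C_2/C_1)$ and set $\alpha = (C_2 - pC_1)/(1-p) > 0$. The factorization $e^{-C_2|x-y|^2} g(y) = \left[e^{-C_1|x-y|^2} g(y)\right]^p \left[e^{-\alpha|x-y|^2} g(y)\right]^{1-p}$ and Hölder's inequality with conjugate exponents $1/p$ and $1/(1-p)$ yield
\begin{equation*}
\int_{\R^n} e^{-C_2|x-y|^2} g(y)\,\upd y \leq \left( \int_{\R^n} e^{-C_1|x-y|^2} g(y)\,\upd y \right)^p \left( \int_{\R^n} e^{-\alpha|x-y|^2} g(y)\,\upd y \right)^{1-p}.
\end{equation*}
The last factor is dominated by $N\|\vev_{\upini}\|_{\mathcal{L}^\infty}(\pi/\alpha)^{n/2}$, and rearranging gives the desired inequality with $\kappa = C_\upp c_\upp^{-p} \bigl(N(\pi/\alpha)^{n/2}\bigr)^{1-p}$. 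The main obstacle will be the lower Gaussian bound on the Green's matrix, which genuinely relies on the full coupling provided by \ref{ass:irreducible} and requires careful bookkeeping in the iterated Duhamel argument; once it is secured, the interpolation step and the upper bound are classical.
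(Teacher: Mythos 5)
Your plan is sound and follows essentially the same route as the paper: represent $\vev(1,\cdot)$ via the fundamental matrix, establish two-sided Gaussian bounds (upper from standard theory, lower by exploiting the full coupling), and interpolate via Hölder. The Hölder step is organized slightly differently from the paper's -- you factorize $e^{-C_2|x-y|^2}g(y)$ as a product of $p$ and $(1-p)$ powers, while the paper splits off $e^{-(1-s)C_2|x-y|^2}$ and raises $g$ to the power $p'=1/p$ inside the integral -- but both are standard applications of Hölder with the same constraint $p<C_2/C_1$ (in your notation), so this is cosmetic. Your preliminary $\mathcal{L}^\infty$ bound by the spatially constant supersolution is not needed for the interpolation as you have set it up, since $\|g\|_\infty\leq N\|\vev_{\upini}\|_\infty$ is used directly.

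For the lower Gaussian bound, the paper does not go through the iterated Duhamel chain: it proves the lower bound at the diagonal entry $\Gamma_{x,j,j}$ at time $t=1/2$ by comparing, through \ref{ass:cooperative}, with the scalar heat kernel of $\caP_j-l_{j,j}$ and invoking Aronson's scalar lower bound, and then propagates the positivity to every $\Gamma_{x,i,j}$ at time $t=1$ via the Harnack inequality of Proposition \ref{prop:harnack_inequality}. You mention this route only ``alternatively.'' In fact the Harnack route is the more robust one here, and you should take it as primary: a direct iterated Duhamel argument is delicate precisely because \ref{ass:irreducible} only guarantees that each off-diagonal $l_{k_{s+1},k_s}$ is positive on some nonempty open subset of the periodicity cell, not everywhere, so the convolution integrals along the chain must be localized where the coupling lives and one has to track Gaussian decay through each localization. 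This is exactly the bookkeeping that the Harnack inequality of Proposition \ref{prop:harnack_inequality}, whose proof already handles the full coupling under \ref{ass:cooperative}--\ref{ass:irreducible}, does for you in one stroke, at the modest cost of working at two time levels ($1/2$ and $1$).
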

\begin{proof}
    Let $x\in\R^N$ be fixed.

    The proof uses two-sided Gaussian estimates at time $t=1$ on $\vect{\Gamma}_x$,
    the fundamental matrix solution of the linear Cauchy problem:
    \begin{equation*}
	\begin{cases}
	    \cbQ\vect{\Gamma}_x=\vez & \text{in }(0,+\infty)\times\R^n, \\
	    \vect{\Gamma}_x(0,y)=\delta_x(y)\vect{I} & \text{for all }y\in\R^n.
	\end{cases}
    \end{equation*}

    The upper Gaussian estimates have the following form: there exists constants $C_1,C_2>0$, independent of $x$, 
    such that
    \begin{equation}\label{eq:upper_gaussian_estimates}
	\Gamma_{x,i,j}(1,y)\leq C_1 \exp\left( -C_2|x-y|^2 \right)\quad\text{for all }i,j\in[N],\ y\in\R^n.
    \end{equation}
    We do not prove these quite standard upper Gaussian estimates and refer instead to \cite{Dong_Kim_2018} for the special
    case where each $q_i$ is divergence-free and to \cite[Chapter 9, Theorem 2]{Friedman_1983}, 
    \cite[Theorem 2.64]{Auscher_Egert_2022} for the general case. Note that such estimates do not require the 
    assumptions \ref{ass:cooperative}, \ref{ass:irreducible} on the structure of $\veL$; their proof only exploits a local 
    boundedness property of weak solutions.

    On the contrary, and as is well-known for scalar equations, lower Gaussian estimates are proved thanks to the Harnack 
    inequality of Proposition \ref{prop:harnack_inequality}. Since this Harnack inequality is derived from the assumptions 
    \ref{ass:cooperative}, \ref{ass:irreducible} and is therefore more specific to our setting, let us give a few details.
    By standard properties of the fundamental matrix solution, the column vector
    $\vew_j=(\Gamma_{x,i,j})_{i\in[N]}$, for any $j\in[N]$, is the solution of
    \begin{equation*}
	\begin{cases}
	    \cbQ\vew=\vez & \text{in }(0,+\infty)\times\R^n, \\
	    \vew(0,y)=\delta_x(y)\vect{e}_j & \text{for all }y\in\R^n.
	\end{cases}
    \end{equation*}
    From this observation, the comparison $\veL\geq\diag(l_{i,i})_{i\in[N]}$ and a standard duality argument, it is easily
    derived that $\Gamma_{x,j,j}\geq\Gamma_{x,j}$ in $(0,+\infty)\times\R^n$, where $\Gamma_{x,j}$ is the fundamental solution 
    associated with the scalar operator $\caP_j -l_{j,j}$ and the initial value $\delta_x$. Then, from lower Gaussian estimates 
    for fundamental solutions of scalar linear parabolic equations with space-time periodic coefficients \cite{Aronson_1968}, 
    it is deduced that there exists constants $c_1,c_2>0$ independent of $x$ such that, for all $j\in[N]$ and $y\in\R^n$,
    \begin{equation*}
	c_1 \exp\left( -c_2|x-y|^2 \right)\leq \Gamma_{x,j,j}(1/2,y).
    \end{equation*}
    By virtue of the Harnack inequality of Proposition \ref{prop:harnack_inequality}, there exists another
    constant $\kappa>0$ such that, for all $i,j\in[N]$ and $y\in\R^n$,
    \begin{equation*}
	\kappa\Gamma_{x,j,j}(1/2,y)\leq\Gamma_{x,i,j}(1,y)
    \end{equation*}
    so that, up to changing $c_1$, the following lower Gaussian estimates hold true:
    \begin{equation}\label{eq:lower_gaussian_estimates}
	c_1 \exp\left( -c_2|x-y|^2 \right)\leq \Gamma_{x,i,j}(1,y)\quad\text{for all }i,j\in[N],\ y\in\R^n.
    \end{equation}
    
    Next, by standard properties of the fundamental matrix solution,
    \begin{equation*}
	v_j(1,x) = \int_{\R^n}\left(\vect{\Gamma}_x(1,y)\vev_{\upini}(y)\right)_j\upd y.
    \end{equation*}
    Let $p'>\frac{c_2}{C_2}$ (note that \eqref{eq:upper_gaussian_estimates} and \eqref{eq:lower_gaussian_estimates}
    combined imply $p'>1$) and $s=\frac{c_2}{p'C_2}$ ($s\in(0,1)$), 
    so that $sC_2=\frac{c_2}{p'}$. Define $q'>1$ such that $\frac{1}{p'}+\frac{1}{q'}=1$.
    Using the upper Gaussian estimate \eqref{eq:upper_gaussian_estimates}
    and the H\"{o}lder inequality, we find:
    \begin{equation*}
	v_j(1,x)\leq C_1\left(\int_{\R^n} \upe^{-(1-s)q'C_2|x-y|^2}\upd y \right)^\frac{1}{q'}
	\left(\int_{\R^n}\upe^{-c_2|x-y|^2}\left(\sum_{k=1}^N v_{\upini}(y)\right)^{p'} \upd y \right)^\frac{1}{p'}
    \end{equation*}
    Hence, there exists a positive constant $C>0$, that depends only on $c_1$, $C_1$, $c_2$, $C_2$,
    and the choice of $p'$, such that:
    \begin{equation*}
	v_j(1,x)\leq C\left\|\sum_{k=1}^N v_{\upini}\right\|_{\mathcal{L}^\infty(\R^n)}^{\frac{p'-1}{p'}} \left(\int_{\R^n} c_1\upe^{-c_2|x-y|^2}\sum_{k=1}^N v_{\upini}(y) \upd y \right)^\frac{1}{p'}
    \end{equation*}

    Using the lower Gaussian estimate \eqref{eq:lower_gaussian_estimates}
    ends the proof with $p=1/p'$.
\end{proof}

\red{We deduce the following corollary, that will be used repeatedly and especially to control the solution $\veu$ of
\eqref{sys:KPP}--\eqref{IC} from below.

\begin{cor}\label{cor:comparison_of_components_in_real_time}
    Let $M>0$ such that 
    \begin{equation*}
	\max_{i\in[N]}\sup_{x\in\R^n}u_{\upini,i}(x)\leq M.
    \end{equation*}

    Then there exists $p\in(0,1)$ and $\kappa_M>0$ such that the solution $\veu$ of \eqref{sys:KPP}--\eqref{IC}
    satisfies
    \begin{equation*}
	\forall i,j\in[N]\quad u_j(t,x)\leq \kappa_M u_i(t,x)^p\quad\text{for all }(t,x)\in[1,+\infty)\times\R^n.
    \end{equation*}
\end{cor}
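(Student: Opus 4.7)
The plan is to reinterpret $\veu$ as the solution of a linear cooperative Cauchy problem with bounded diagonal potential, and to adapt the proof of Lemma \ref{lem:comparison_of_components_in_real_time_linear} to this setting. By Proposition \ref{prop:global_bound_absorbing_set}, $\veu$ is globally bounded by a constant depending only on $M$. Setting $f_i := \sum_{k=1}^{N} c_{i,k} u_k$ for each $i\in[N]$, the solution $\veu$ satisfies the linear Cauchy problem
\begin{equation*}
    \left(\cbQ + \diag(f_i)_{i\in[N]}\right)\veu = \vez \quad\text{in }(0,+\infty)\times\R^n,\qquad \veu(0,\cdot)=\veu_{\upini},
\end{equation*}
with each $f_i$ bounded by some $F$ depending only on $M$. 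The potential $(f_i)$ is bounded and H\"{o}lder continuous (by parabolic regularity of $\veu$) but not space-time periodic; the operator $\cbQ+\diag(f_i)$ nevertheless remains cooperative and irreducible since the perturbation is diagonal.

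I then expect the proof of Lemma \ref{lem:comparison_of_components_in_real_time_linear} to adapt to this setting with constants depending on $F$ while keeping the exponent $p$ independent of $M$. The upper Gaussian estimates transfer with unchanged constants $C_1,C_2$: since $f_i\geq 0$, the fundamental matrix of the perturbed operator is componentwise dominated by that of $\cbQ$ (apply the comparison principle of Proposition \ref{prop:comparison_principle} to each column). For the lower Gaussian estimates, the cooperative duality argument from the proof of Lemma \ref{lem:comparison_of_components_in_real_time_linear} shows that the diagonal entries dominate the scalar fundamental solution of $\caP_j v = (l_{j,j}-f_j)v$; the standard exponential change of unknown $w=\upe^{Gt}v$ with $G\geq\|l_{j,j}-f_j\|_{\mathcal{L}^\infty}$ reduces this to Aronson's lower Gaussian estimate for $\caP_j$ alone, so the Gaussian exponent $c_2$ is preserved and only the prefactor gains a multiplicative factor $\upe^{-G}$. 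Proposition \ref{prop:harnack_inequality}, which already accommodates bounded non-periodic potentials, then propagates the lower bound to all entries of the fundamental matrix. Since the ratio $c_2/C_2$ is preserved, the choice of $p=1/p'$ with $p'>c_2/C_2$ is independent of $M$.

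Applied at time $t=1$, this yields the desired inequality $u_j(1,x)\leq\kappa_M u_i(1,x)^p$ for all $x\in\R^n$ and $i,j\in[N]$, with the factor $M^{1-p}$ absorbed into $\kappa_M$. For arbitrary $t\geq 1$, I apply the same argument to the time-shifted solution $\veu_t(s,x):=\veu(s+t-1,x)$: its initial data $\veu(t-1,\cdot)$ is uniformly bounded by $K+M$ (Proposition \ref{prop:global_bound_absorbing_set}), and the coefficients of the associated linear problem satisfy the same structural bounds by space-time periodicity, so the constants $p$ and $\kappa_M$ can be chosen uniformly in $t$. The main obstacle is justifying this adaptation of Lemma \ref{lem:comparison_of_components_in_real_time_linear}, which ultimately comes down to the observation that the quoted Aronson-type Gaussian estimates and Proposition \ref{prop:harnack_inequality} only require boundedness (and, for Harnack, H\"{o}lder continuity) of the potential, not periodicity.
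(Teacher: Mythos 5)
Your proof is correct, but it takes a genuinely different route from the paper. The paper's proof keeps Lemma \ref{lem:comparison_of_components_in_real_time_linear} as a black box: it observes that, by the global bound $\veu\leq(K+M)\veo$, one has $\vez\leq(\veC\veu)\circ\veu\leq C\veu$ with $C=N(K+M)\max c_{i,j}$, so that over each unit time interval $[t-1,t]$ the nonlinear solution is sandwiched as $\upe^{-C}\vev(1,\cdot)\leq\veu(t,\cdot)\leq\vev(1,\cdot)$, where $\vev$ solves the unperturbed linear problem $\cbQ\vev=\vez$ with data $\vev(0,\cdot)=\veu(t-1,\cdot)$. Lemma \ref{lem:comparison_of_components_in_real_time_linear} is then applied verbatim to $\vev$ and the two exponential factors are absorbed into $\kappa_M$. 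You instead absorb the nonlinearity into a bounded diagonal potential $\diag(f_i)$ and re-derive the two-sided Gaussian estimates and the Harnack step for the perturbed, non-periodic operator $\cbQ+\diag(f_i)$. This works: you are right that the upper bound transfers by comparison since $f_i\geq 0$, that the lower bound survives with a prefactor loss via the exponential comparison with the scalar operator $\caP_j-l_{j,j}$, and that Proposition \ref{prop:harnack_inequality} is stated for bounded H\"{o}lder potentials rather than periodic ones. The trade-off is that your route requires re-auditing each ingredient of Lemma \ref{lem:comparison_of_components_in_real_time_linear} under the perturbation, whereas the paper's one-line sandwich avoids that entirely and is shorter; on the other hand, your argument makes explicit that the exponent $p$ is insensitive to the potential and depends only on the ratio of Gaussian exponents, which the paper's reduction gives for free but does not emphasize.
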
}

\begin{proof}
    From the global bounds of Proposition \ref{prop:global_bound_absorbing_set}, we deduce the inequalities
\begin{equation*}
    \veL\veu-C\veu\leq\veL\veu-(\veC\veu)\circ\veu\leq\veL\veu\quad\text{in }[0,+\infty)\times\R^n
\end{equation*}
with
\begin{equation*}
    C=N(K+M)\max_{(i,j)\in[N]^2,(t,x)\in\clOmper}c_{i,j}(t,x)>0.
\end{equation*}
Let $t\geq 1$. By virtue of the comparison principle of Proposition \ref{prop:comparison_principle},
\begin{equation*}
    \upe^{-C}\vev(1,x)\leq\veu(t,x)\leq\vev(1,x)\quad\text{for all }x\in\R^n,
\end{equation*}
where $\vev$ solves
\begin{equation*}
    \begin{cases}
	\cbQ\vev=\vez & \text{in }(0,+\infty)\times\R^n, \\
	\vev(0,\cdot)=\veu(t-1,\cdot) & \text{in }\R^n.
    \end{cases}
\end{equation*}
Applying Lemma \ref{lem:comparison_of_components_in_real_time_linear} and Proposition
\ref{prop:global_bound_absorbing_set}, it follows that, 
for all $i,j\in[N]$ and $x\in\R^n$,
\begin{align*}
    u_j(t,x) & \leq v_j(1,x) \\
    & \leq \kappa\|\veu(t-1,\cdot)\|_{\mathcal{L}^\infty(\R^n,\R^N)}^{1-p} v_i(1,x)^p \\
    & \leq \kappa(K+M)^{1-p}\upe^{pC}u_i(t,x)^p.
\end{align*}

Setting $\kappa_M=\kappa(K+M)^{1-p}\upe^{pC}$ ends the proof.
\end{proof}

\section{Proofs}

Below we prove our results. In order to ease the reading, we first prove the extinction results
(Theorems \ref{thm:extinction} and \ref{thm:extinction_small_solutions}), that use super-solutions, 
then the persistence results (Theorems \ref{thm:existence_persistent_entire_solution}, 
\ref{thm:hair_trigger_effect} and \ref{thm:persistence_large_solutions}), that use sub-solutions. We conclude
with the proof of the Freidlin--G\"{a}rtner-type formula (Theorem \ref{thm:FG_formula})\red{, that uses
both super- and sub-solutions.}

\subsection{Global extinction (Theorem \ref{thm:extinction})}

It is convenient to distinguish two cases: $\lambda_1'>0$ on one hand, $\lambda_1'=0$ on the other 
hand. In the first case, the extinction is due to the linear part of the operator (and occurs 
therefore at an exponential rate). In the second case, however, the extinction is due to the 
signed quadratic part of the operator (it is conjectured to occur at an algebraic rate,
\textit{cf.} Remark \ref{rem:algebraic_extinction}). 
The following proofs are straightforward adaptations of \cite{Girardin_2016_2,Girardin_2016_2_add}.

\begin{proof}[Proof in the case $\lambda_1'>0$]
    The idea is very classical and consists in constructing a super-solution of the form
    \begin{equation*}
	\overline{\veu}:(t,x)\mapsto M\upe^{-\lambda_1' t}\veu_0(t,x),
    \end{equation*}
    where $\veu_0$ is a positive generalized principal eigenfunction associated with $\lambda_1'$ of fixed amplitude 
    and $M>0$ is a constant so large that $M\veu_0\gg\veu_{\upini}$. Then $\overline{\veu}-\veu$
    satisfies
    \begin{equation*}
	\cbQ(\overline{\veu}-\veu)=(\veC\veu)\circ\veu\geq\vez\quad\text{in }(0,+\infty)\times\R^n,
    \end{equation*}
    so that by the comparison principle of Proposition \ref{prop:comparison_principle},
    $\overline{\veu}\geq\veu$ globally in $(0,+\infty)\times\R^n$,
    and consequently $\veu$ vanishes asymptotically in time, uniformly in space, exponentially fast.
\end{proof}

\begin{proof}[Proof in the case $\lambda_1'=0$]
    This time we use a family of super-solutions of the form 
    \begin{equation*}
	\overline{\veu}_T:(t,x)\in(T,+\infty)\times\R^n\mapsto M_T\veu_0(t,x).
    \end{equation*}
    Assuming that $M_T>0$ is defined optimally for each $T\geq 0$, namely
    \begin{equation*}
	M_T=\sup_{x\in\R^n}\max_{i\in[N]}\frac{u_i(T,x)}{u_{0,i}(T,x)},
    \end{equation*}
    the goal is to prove that $M_T$ decreases to $0$ as $T\to+\infty$.

    By the comparison principle of Proposition \ref{prop:comparison_principle}, 
    $\overline{\veu}_T\geq\veu$ globally in $[T,+\infty)\times\R^n$.
    Therefore, for any $T'>T$, $M_{T'}\leq M_T$, simply by definition of $M_{T'}$. Hence
    the family $(M_T)_{T\geq 0}$ is nonincreasing.

    Of course, if $\veu_{\upini}=\vez$, then $M_0=0$, the family $(M_T)_{T\geq 0}$ is
    stationary and $\veu=\vez$, which ends the proof. From now on we discard this case and
    therefore assume $\veu_{\upini}\neq \vez$. Under such an assumption, let us prove
    that $(M_T)_{T\geq 0}$ is actually decreasing.

    Assume by contradiction that there exist $0\leq T<T'$ such that $M_T\leq M_{T'}$. Then, by
    large monotonicity, $t\mapsto M_t$ is constant in $[T,T']$. Below, we will begin by discarding
    the possibility that the optimum defining $M_T$ is attained pointwise, and subsequently we 
    will discard the possibility that it is attained asymptotically. 
    We recall the basis for our application
    of the strong comparison principle:
    \begin{equation*}
	\begin{cases}
	    \cbQ(\overline{\veu}_T-\veu)=(\veC\veu)\circ\veu\geq\vez & \text{in }[T,+\infty)\times\R^n, \\
	    \overline{\veu}_T(T,\cdot)-\veu(T,\cdot)\geq\vez & \text{in }\R^n.
	\end{cases}
    \end{equation*}

    If there exists $(t^\star,x^\star,i^\star)\in(T,T']\times\R^n\times[N]$ such that 
    $M_T u_{0,i^\star}(t^\star,x^\star)=u_{i^\star}(t^\star,x^\star)$,
    then by virtue of the strong comparison principle, $\veu=\overline{\veu}_T=M_T\veu_0$ globally in
    $[T,t^\star]\times\R^n$. \red{Therefore
    \begin{equation*}
	\cbQ\veu=M_T\cbQ\veu_0=M_T\lambda_{1,0}\veu_0=M_T\lambda_1'\veu_0=\vez\quad\text{in }[T,t^\star]\times\R^n,
    \end{equation*}
    where the assumption $\lambda_1'=0$ was used. But the definition of $\veu$ as the solution of the Cauchy problem
    \eqref{sys:KPP}--\eqref{IC} with $\veu_{\upini}\neq\vez$ implies $\veu\gg\vez$ in $(0,+\infty)\times\R^n$,
    which in turn implies $\cbQ\veu=-\veC\veu\circ\veu\ll\vez$.
    This is a contradiction.} Hence there does not exist such a triplet $(t^\star,x^\star,i^\star)$.

    Consequently, for all $(t,x,i)\in(T,T']\times\R^n\times[N]$, 
    $M_T u_{0,i}(t,x)>u_i(t,x)$ and the equality can only be attained asymptotically at $|x|=\infty$.

    Fix temporarily $t_0\in(T,T')$ and let $i\in[N]$ such that 
    \begin{equation*}
	M_T=\sup_{x\in\R^n}\frac{u_i(t_0,x)}{u_{0,i}(t_0,x)}.
    \end{equation*}
    There exists a sequence $(x_k)\in(\R^n)^{\N}$ such that $|x_k|\to+\infty$ and 
    \begin{equation*}
	\frac{u_i(t_0,x_k)}{u_{0,i}(t_0,x_k)}\to M_T\quad\text{as }k\to+\infty.
    \end{equation*}
    We intend to use the spatial periodicity and therefore we define, for each $k\in\N$,
    $y_k\in[0,L]$ and $z_k\in\prod_{\alpha\in[n]}L_\alpha\Z$ such that $x_k=y_k+z_k$. 
    Up to extraction, the sequence $(y_k)$ converges to a limit $y_\infty\in[0,L]$.
    Then, by classical parabolic estimates \cite{Lieberman_2005} and up to a diagonal extraction, the sequence $(\veu_k)$ defined by
    \begin{equation*}
	\veu^k:(t,x)\mapsto\veu(t,x+z_k)\quad\text{for each }k\in\N
    \end{equation*}
    converges locally uniformly to a solution $\veu^\infty$ of \eqref{sys:KPP}. The solution $\veu^\infty$ satisfies moreover 
    $M_T u_{0,i}(t_0,y_\infty)=u^\infty_{i}(t_0,y_\infty)$ and also 
    $\veu^\infty\leq M_T\veu_0$ globally in $(T,T')\times\R^n$.
    Repeating the previous strong comparison argument, we deduce again a contradiction.

    Therefore $(M_T)_{T\geq 0}$ is decreasing and converges to a limit $M_\infty\geq 0$.
    Assume by contradiction $M_\infty>0$. Then there exist $i\in[N]$ and a sequence $(t_k,x_k)$ 
    such that 
    \begin{equation*}
	t_k\to+\infty,\quad |x_k|\to+\infty,\quad \frac{u_i(t_k,x_k)}{u_{0,i}(t_k,x_k)}\to M_\infty\quad\text{as }k\to+\infty.
    \end{equation*}
    Arguing exactly as before after passing to the limit $k\to+\infty$ locally uniformly, a new 
    contradiction arises. In the end, $M_\infty=0$, and by definition of $M_T$, $\veu$
    vanishes asymptotically in time, uniformly in space.
\end{proof}

\begin{rem}\label{rem:algebraic_extinction}
    We are unable to prove the algebraic decay in the case $\lambda_1'=0$. It is strongly conjectured
    in view of the quadratic nonlinearity (by analogy with the ODE $u'=-u^2$), but it remains 
    as an open question. Some technical obstacles are discussed in 
    \cite[Section 4.1.1]{Girardin_2016_2}.
\end{rem}

\subsection{Conditional extinction of small solutions (Theorem \ref{thm:extinction_small_solutions})}
In this section, we prove that if there exists $z\in\R^n$ such that:
\begin{enumerate}
    \item $\lambda_{1,z}\geq 0$;
    \item there exists $C>0$ such that, for all $x\in\R^n$, $\max_{i\in[N]}u_{\upini,i}(x)\leq C\upe_z(x)$;
\end{enumerate}
then the solution of the Cauchy problem goes extinct locally uniformly.

(Note that $\lambda_{1,z}\geq 0$ implies $\lambda_1\geq 0$.)

The proof is very similar to that of Theorem \ref{thm:extinction} but applies the comparison principle to a 
semilinear system instead of a linear one, in order to account for the unboundedness of the eigenfunctions.

\begin{proof}
    \red{We apply Proposition \ref{prop:global_bound_absorbing_set} to obtain a global bound $\veu\leq(K+M)\veo$. 

    Let $T\geq 0$. We define, for some $M_T>0$ to be made precise in a moment,
    \begin{equation*}
	\overline{\vev}:(t,x)\in[T,+\infty)\times\R^n\mapsto M_T\upe^{-\lambda_{1,z}t}\upe^{z\cdot x}\veu_{z}(t,x),
    \end{equation*}
    \begin{equation*}
	\overline{\veu}:(t,x)\in[T,+\infty)\times\R^n\mapsto \left(\min\left(\overline{v}_i(t,x),K+M\right)\right)_{i\in[N]},
    \end{equation*}
    where $\veu_z\in\caC^{1,2}_{\upp}(\R\times\R^n,(\vez,\vei))$ is a positive periodic principal eigenfunction of the operator
    $\upe_{-z}\cbQ\upe_z$.

    On one hand, by definition, $\overline{\vev}(t,x)$ satisfies
    \begin{equation*}
	\cbQ\overline{\vev}=-\lambda_{1,z}\overline{\vev}+\lambda_{1,z}\overline{\vev}=\vez\quad\text{in }(T,+\infty)\times\R^n
    \end{equation*}
    so that
    \begin{equation*}
	\cbQ\overline{\vev}\geq -\diag(c_{i,i})\overline{\vev}\circ\overline{\vev}\quad\text{in }(T,+\infty)\times\R^n.
    \end{equation*}

    On the other hand, in view of the construction of $K$, we can assume without loss of generality that
    \begin{equation*}
	\veL\left( \left( K+M \right)\veo \right)-\diag(c_{i,i})\left( \left( K+M \right)\veo \right)\circ\left( \left( K+M \right)\veo \right)\leq\vez\quad\text{in }(T,+\infty)\times\R^n.
    \end{equation*}

    Hence by virtue of Proposition \ref{prop:generalized_comparison_principle} we can use $\overline{\veu}$ as 
    a bounded super-solution to be compared with the sub-solution $\veu$:
    \begin{equation*}
	\cbQ\veu=-\veC\veu\circ\veu\leq-\diag(c_{i,i})\veu\circ\veu\quad\text{in }(T,+\infty)\times\R^n.
    \end{equation*}

    In the case $T=0$, we choose $M_0$ appropriately large so that $\overline{\veu}(0,\cdot)\geq\veu_{\upini}$
    and we deduce from the comparison principle that $\overline{\veu}\geq\veu$ globally in $[0,+\infty)\times\R^n$.

    In the case $T>0$, we use the exponential bound on $\veu$ deduced from the case $T=0$ to show the existence
    of $M_T<+\infty$ such that $\overline{\veu}(T,\cdot)\geq\veu(T,\cdot)$, and we deduce similarly that $\overline{\veu}\geq\veu$
    globally in $[T,+\infty)\times\R^n$.}

    To conclude, as in the proof of Theorem \ref{thm:extinction}, we distinguish two cases:
    \begin{itemize}
	\item in the case $\lambda_{1,z}>0$, the super-solution with $T=0$ vanishes locally uniformly 
	    so that the solution also vanishes locally uniformly;
	\item in the case $\lambda_{1,z}=0$, as in the case $\lambda_1'=0$ of the proof of Theorem \ref{thm:extinction},
	    we assume $M_T$ to be optimal,
	    \begin{equation*}
		M_T=\sup_{x\in\R^n}\max_{i\in[N]}\frac{u_i(T,x)}{\upe^{z\cdot x}u_{z,i}(T,x)},
	    \end{equation*}
	    and show by comparison and limiting arguments that as $T\to+\infty$ it decreases to zero if $\veu_{\upini}$ 
	    is nonzero. 
    \end{itemize}
\end{proof}

\subsection{Existence of a nonnegative nonzero space-time periodic entire solution (Theorem \ref{thm:existence_persistent_entire_solution})}

In order to prove Theorem \ref{thm:existence_persistent_entire_solution}, we adapt the 
arguments of the proof of \cite[Theorem 2.3]{Alfaro_Griette}, which is a similar result but
established under more restrictive assumptions ($N=2$, the coefficients are space-periodic but 
time-homogeneous) and with more precise conclusions (the constructed solutions are space-periodic 
time-independent stationary states).

The proof involves the following bifurcation theorem \cite[Theorem 3.1]{Alfaro_Griette}, that
we recall for clarity. The notations in the following statement are completely independent
from the notations in the rest of the paper.

\begin{thm}\label{thm:Alfaro_Griette}
    Let $E$ be a Banach space and $C\subset E$ be a closed convex cone with nonempty interior
    and vertex $0$ (\textit{i.e.}, $C\cap-C=\{0\}$). Let $F:\R\times E\to E$ be a continuous
    compact operator and
    \begin{equation*}
	\mathcal{S}=\overline{\left\{ (\alpha,x)\in\R\times E\backslash\{0\}\ |\ F(\alpha,x)=x \right\}}
    \end{equation*}
    and 
    \begin{equation*}
	\mathbb{P}_{\R}\mathcal{S}=\left\{ \alpha\in\R\ |\ \exists x\in C\backslash\{0\}\ (\alpha,x)\in\mathcal{S} \right\}.
    \end{equation*}

    Assume the following properties.
    \begin{enumerate}
	\item For all $\alpha\in\R$, $F(\alpha,0)=0$.
	\item $F$ is Fréchet differentiable near $\R\times\{0\}$ with derivative $\alpha T$
	    locally uniformly with respect to $\alpha$.
	\item $T$ is strongly positive in the sense of the Krein--Rutman theorem: 
	    $T(C\backslash\{0\})\subset\operatorname{int}(C)$. Its Krein--Rutman eigenvalue
	    is denoted $\rho(T)>0$.
	\item $\mathcal{S}\cap(\{\alpha\}\times C)$ is bounded locally uniformly with respect to 
	    $\alpha\in\R$.
	\item $\mathcal{S}\cap(\R\times(\partial C\backslash\{0\}))=\emptyset$.
    \end{enumerate}

    Then, either $\left( -\infty,\frac{1}{\rho(T)} \right)\subset\mathbb{P}_{\R}\mathcal{S}$
    or $\left( \frac{1}{\rho(T)},+\infty \right)\subset\mathbb{P}_{\R}\mathcal{S}$.
\end{thm}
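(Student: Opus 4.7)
The plan is to derive this theorem as a Dancer-type refinement of Rabinowitz's global bifurcation theorem. The fixed-point problem $F(\alpha,x)=x$ rewrites as $x-\alpha T x = F(\alpha,x)-\alpha T x =: R(\alpha,x)$, where assumption (2) forces $R(\alpha,x)=o(\|x\|)$ as $x\to 0$ uniformly on compact $\alpha$-intervals. Assumption (3), combined with the Krein--Rutman theorem applied to the compact strongly positive operator $T$, yields that $\rho(T)$ is the simple (both algebraically and geometrically) dominant eigenvalue of $T$, with principal eigenvector $v_0\in\operatorname{int}(C)$; equivalently, $\alpha^\star:=1/\rho(T)$ is a characteristic value of $T$ of algebraic multiplicity one.

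I would then compute the Leray--Schauder index of $x\mapsto\alpha T x$ at the origin as $\alpha$ crosses $\alpha^\star$. Simplicity of the characteristic value forces the index to flip sign, so the standard Rabinowitz argument identifies $(\alpha^\star,0)$ as a bifurcation point and produces a connected closed component $\mathcal{C}\subset\mathcal{S}$ passing through it, satisfying the global alternative: either $\mathcal{C}$ is unbounded in $\R\times E$, or $\mathcal{C}$ meets $\R\times\{0\}$ at some second point $(\alpha',0)$ with $\alpha'\neq\alpha^\star$.

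Next I would confine the bifurcating branch to the cone. Near $(\alpha^\star,0)$, a quantitative expansion along the one-dimensional kernel spanned by $v_0$ shows that nontrivial solutions on $\mathcal{C}$ take the form $x=sv_0+o(s)$ with $s\neq 0$, so locally $\mathcal{C}$ splits into two sub-branches, one entering $\operatorname{int}(C)$ and the other entering $-\operatorname{int}(C)$. Assumption (5), that $\mathcal{S}\cap(\R\times(\partial C\backslash\{0\}))=\emptyset$, then prevents the positive sub-branch $\mathcal{C}_+$ from ever crossing $\partial C$, so its closure sits inside $\R\times C$. A hypothetical return of $\mathcal{C}_+$ to $\R\times\{0\}$ at some $(\alpha',0)$ with $\alpha'\neq\alpha^\star$ would provide, via compactness of $F$, the linearization $\alpha T$ and normalization, a nontrivial element of $C$ in the kernel of $I-\alpha' T$; Krein--Rutman simplicity of the positive eigenvector then forces $\alpha'=\alpha^\star$, a contradiction. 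The Rabinowitz alternative therefore selects the unbounded case for $\mathcal{C}_+$.

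The main obstacle is the interplay of the Rabinowitz alternative with the cone structure and the boundedness condition (4): one must simultaneously rule out that $\mathcal{C}_+$ escapes through $\partial C$, that it loops back to the trivial branch at a different eigenvalue, and that it blows up in norm at a finite $\alpha$. Once these three exclusions are in hand, assumption (4) forces the unboundedness of $\mathcal{C}_+$ to be realized in the $\alpha$-direction, so the connected set $\mathbb{P}_\R\mathcal{C}_+\subset\R$ contains $\alpha^\star$ and is unbounded; being connected, it contains either $(-\infty,\alpha^\star)$ or $(\alpha^\star,+\infty)$. Since $\mathcal{C}_+\backslash\{(\alpha^\star,0)\}\subset\R\times(C\backslash\{0\})$, this inclusion passes to $\mathbb{P}_\R\mathcal{S}$ and yields the claimed dichotomy.
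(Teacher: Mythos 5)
Your proposal targets a statement that the paper itself does not prove: Theorem \ref{thm:Alfaro_Griette} is quoted verbatim from Alfaro--Griette (their Theorem 3.1) and used as a black box, so there is no in-paper proof to compare against. That said, your sketch follows exactly the route by which such results are established in the cited source: Krein--Rutman gives $\alpha^\star=1/\rho(T)$ as the unique characteristic value with an eigenvector in $C$, of algebraic multiplicity one; the Leray--Schauder index of $I-\alpha T$ flips as $\alpha$ crosses $\alpha^\star$; Rabinowitz's global alternative produces the component $\mathcal{C}$; the cone-invariance hypothesis (5) confines the positive sub-branch, the Krein--Rutman normalization argument excludes a return to the trivial line at $\alpha'\neq\alpha^\star$, and hypothesis (4) converts unboundedness in $\R\times E$ into unboundedness of the $\alpha$-projection, whose connectedness yields the dichotomy. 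The one step you should flag more carefully is the \emph{unilateral} global alternative for the single sub-branch $\mathcal{C}_+$: the naive splitting of Rabinowitz's component into two sub-branches each satisfying the alternative is precisely the point where Rabinowitz's original Theorems 1.27/1.40 were shown by Dancer and L\'opez-G\'omez to be incorrect in general, so you cannot simply assert it. Here it can be repaired because hypothesis (5) forces the closure of the part of $\mathcal{S}\setminus(\R\times\{0\})$ meeting $\operatorname{int}(C)$ to stay in $\R\times C$, so one applies the global alternative (via Whyburn's separation lemma) directly to the connected component of $\mathcal{S}\cap(\R\times C)$ through $(\alpha^\star,0)$ rather than to a sub-branch of the full Rabinowitz continuum. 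With that substitution your argument is sound and is essentially the proof given in the reference.
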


In our case, the Banach space will be $\mathcal{C}^{1+\delta/2,2+\delta}_{\upp}(\R\times\R^n,\R^N)$,
the cone will be $\mathcal{C}^{1+\delta/2,2+\delta}_{\upp}(\R\times\R^n,[\vez,\vei))$
\footnote{In \cite{Alfaro_Griette}, the authors chose the Banach space 
$L^\infty_{\upp}(\R\times\R^n,\R^N)$. However the cone of nonnegative functions in this space
has empty interior. More generally, it is known that the Krein--Rutman theorem cannot be 
directly applied in $L^p$ spaces. This is why usually a space of H\"{o}lder-continuous 
functions is used instead. Indeed, replacing $L^\infty$ by $\mathcal{C}^{2+\delta}$
all along the proof in \cite{Alfaro_Griette} corrects it without further issues.},
and $F$ will be the mapping $(\alpha,\vect{f})\mapsto\veu$ where
$\veu$ is the space-time periodic solution of 
$\cbQ\veu+M\veu=-\veC\vect{f}\circ\vect{f}+\alpha\vect{f}$. In other words, 
$F(\alpha,\vect{f})=(\cbQ+M)^{-1}(-\veC\vect{f}\circ\vect{f}+\alpha\vect{f})$.
The invertibility of $\cbQ+M$ is obviously false for some values of $M\in\R$, but is true
once $M>0$ is large enough, and this is why this parameter is introduced.

The derivative $T$ at $\R\times\{\vez\}$ can be easily identified as $\alpha(\cbQ+M)^{-1}$ 
(\textit{cf.} \cite{Alfaro_Griette}). 
Note that the Krein--Rutman eigenvalue $\rho(T)$ of $T$ is related 
to the generalized principal eigenvalue $\lambda_1'$ of $\cbQ$ via the relation 
$1/\rho(T)=\lambda_1'+M$. 

Therefore, keeping in mind that, when $\cbQ$ is replaced by 
$\cbQ+M-\alpha$, the extinction case of Theorem \ref{thm:extinction} corresponds to
$0\leq\lambda_1'+M-\alpha$, 
\textit{i.e.} for all $\alpha\leq\lambda_1'+M$, the conclusion of Theorem \ref{thm:Alfaro_Griette}
will read as $\left( \lambda_1'+M,+\infty \right)\subset\mathbb{P}_{\R}\mathcal{S}$.
By continuity, any $(\alpha,\veu)\in\mathcal{S}$ satisfies $F(\alpha,\veu)=\veu$. 
Therefore Theorem \ref{thm:Alfaro_Griette} brings forth a nonnegative nonzero solution 
$\veu$ of $\cbQ\veu+M\veu=-\veC\veu\circ\veu+\alpha\veu$ for any $\alpha>\lambda_1'+M$.

In order to conclude the proof, it will suffice then to observe that:
\begin{enumerate}
    \item $M>\lambda_1'+M$;
    \item all nonnegative nonzero solutions $\veu$ of $\cbQ\veu=-\veC\veu\circ\veu$
	are actually uniformly positive (as a particular case of assumption 5 in
	Theorem \ref{thm:Alfaro_Griette}).
\end{enumerate}

In view of this sketch of the proof, it only remains to verify the assumptions of 
Theorem \ref{thm:Alfaro_Griette}. The first one is immediate; the second one is just 
standard calculus \cite{Alfaro_Griette}; the third one is a standard consequence of 
the comparison principle for cooperative fully coupled systems once $M$ is chosen sufficiently 
large (\textit{e.g.}, \cite{Bai_He_2020,Girardin_Mazari_2022}).
Assumption 4 (the fixed points of $F(\alpha,\cdot)$ are locally bounded) is satisfied
by virtue of Corollary \ref{cor:global_bound_entire} (the parameters $\alpha$ and $M$ change
nothing to the argument and the obtained bound is indeed locally uniform with respect to 
$\alpha$).
It only remains to verify assumption 5 (there is no fixed point of $F(\alpha,\cdot)$ on the 
boundary of the cone). This is the object of the following lemma. For ease of reading, we
only state the case $\alpha=M$, without loss of generality.

\begin{lem}
    All nonnegative nonzero space-time periodic solutions $\veu$ of \eqref{sys:KPP} satisfy:
    \begin{equation*}
	\min_{(t,x)\in\clOmper}\min_{i\in[N]}u_i(t,x)>0.
    \end{equation*}
\end{lem}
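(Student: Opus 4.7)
The plan is to reformulate \eqref{sys:KPP} at a bounded entire solution as a linear cooperative system with bounded zeroth-order coefficient, and then invoke the Harnack inequality of Proposition \ref{prop:harnack_inequality}. By Corollary \ref{cor:global_bound_entire}, $\veu$ is globally bounded by $K\veo$. Setting $\vect{f}=-\veC\veu$ and using the identity $\veC\veu\circ\veu=\diag(\veC\veu)\veu$, the equation can be rewritten as $\cbQ\veu=\diag(\vect{f})\veu$, where $\vect{f}$ is globally bounded. Moreover, standard interior parabolic regularity applied to $\veu$, together with \ref{ass:smooth_periodic}, shows that $\vect{f}\in\mathcal{L}^\infty\cap\caC^{\delta/2,\delta}(\R\times\R^n,\R^N)$.

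I would then fix $\theta\geq\max(T,L_1,\dots,L_n)$, so that each time interval of length $\theta$ contains at least one full temporal period and each spatial interval of length $\theta$ contains at least one full spatial period in every coordinate. Proposition \ref{prop:harnack_inequality} then provides a positive constant $\overline{\kappa}_{\theta,F}$ such that
\[
    \min_{i\in[N]}\min_{(t,x)\in[5\theta,6\theta]\times[-\theta/2,\theta/2]^n}u_i(t,x)\geq \overline{\kappa}_{\theta,F}\max_{i\in[N]}\max_{(t,x)\in[0,2\theta]\times[-\theta/2,\theta/2]^n}u_i(t,x).
\]
By space-time periodicity of $\veu$ and the choice of $\theta$, both the minimum on the left-hand side and the maximum on the right-hand side coincide with the corresponding global extrema over $\R\times\R^n$, and hence with the extrema over $\clOmper$.

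To conclude, it suffices to observe that if $\veu$ is nonzero, then some component $u_i$ takes a strictly positive value at some point of $\clOmper$, so that the maximum on the right-hand side is positive and therefore the minimum on the left-hand side is strictly positive, as claimed. I do not foresee any serious obstacle: the proof reduces to a single application of the Harnack inequality together with a periodicity argument. The one minor verification is that $\vect{f}$ has the H\"{o}lder regularity needed in Proposition \ref{prop:harnack_inequality}, and this follows at once from interior parabolic regularity on $\veu$ and from \ref{ass:smooth_periodic}.
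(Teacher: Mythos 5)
Your proof is correct, and it takes a slightly different route from the paper's. The paper rewrites the semilinear operator as the linear cooperative fully coupled operator $\cbQ+\diag(\veC\veu)$ (your $\cbQ-\diag(\vect{f})$ with $\vect{f}=-\veC\veu$) and then appeals directly to the \emph{strong comparison/maximum principle} for such systems: if a component of $\veu$ vanished at an interior point while $\veu\geq\vez$ and $\veu\neq\vez$, strong coupling would force $\veu\equiv\vez$, a contradiction; compactness of $\clOmper$ then yields a positive minimum. You instead reach the same rewriting and then invoke the quantitative \emph{Harnack inequality} of Proposition~\ref{prop:harnack_inequality}, noting that with $\theta\geq\max(T,L_1,\dots,L_n)$ the boxes $[5\theta,6\theta]\times[-\theta/2,\theta/2]^n$ and $[0,2\theta]\times[-\theta/2,\theta/2]^n$ each contain a full period cell, so the Harnack estimate directly bounds $\min_{\clOmper}\min_i u_i$ from below by a multiple of $\max_{\clOmper}\max_i u_i>0$. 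Both arguments rest on the same structural facts (cooperativity and full coupling from \ref{ass:cooperative}--\ref{ass:irreducible}, plus the global bound from Corollary~\ref{cor:global_bound_entire} to control $\vect{f}$ and to guarantee the H\"{o}lder regularity required of $\vect{f}$); the Harnack route is a bit more quantitative and bypasses the compactness step, while the strong-maximum-principle route is the more classical phrasing. Your one minor verification --- that $\vect{f}\in\caC^{\delta/2,\delta}$ --- is indeed handled by interior parabolic Schauder estimates under \ref{ass:smooth_periodic}, as you say.
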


\begin{proof}
    We rewrite the semilinear operator $\veu\mapsto\cbQ\veu+\veC\veu\circ\veu$ as a space-time heterogeneous
    linear operator $\cbQ+\diag(\veC\veu)$. Provided $\veu$ is a space-time periodic classical solution, 
    this linear operator has space-time periodic globally bounded coefficients and is cooperative and
    fully coupled, and therefore we are in a position to apply the strong comparison principle. The 
    conclusion follows directly.
\end{proof}

\subsection{Hair-trigger effect (Theorem \ref{thm:hair_trigger_effect})}

\red{This proof exploits the present-time comparison of Corollary \ref{cor:comparison_of_components_in_real_time}
and the limit $\lambda_1=\lim_{R\to+\infty}\lambda_{1,\upDir}(B(0,R))$ of
time-periodic Dirichlet principal eigenvalues in balls of increasing radius.

We intend to prove that, for every $R>0$,
\begin{equation*}
    \liminf_{t\to+\infty}\min_{i\in[N]}\min_{|x|\leq R}u_i(t,x)>0.
\end{equation*}

Let $M>0$ such that $\max_{i\in[N]}\sup_{x\in\R^n}u_{\upini,i}(x)\leq M$.
Thanks to Corollary \ref{cor:comparison_of_components_in_real_time}, there exists $p\in(0,1)$ and
$\kappa_M>0$ such that, at any time $t\geq 1$ and for each $i\in[N]$,
\begin{equation*}
    \left(\veC\veu\right)_{i\in[N]}(t,\cdot)\leq\sum_{j=1}^N (c_{i,j}\kappa_M u_i^p)(t,\cdot)
    \leq\kappa_M\left(\sum_{j=1}^N\max_{(t,x)\in\clOmper}c_{i,j}\right) u_i^p(t,\cdot).
\end{equation*}
Denoting 
\begin{equation*}
    D=\kappa_M\max_{i\in[N]}\left(\sum_{j=1}^N\max_{(t,x)\in\clOmper}c_{i,j}\right),
\end{equation*}
it follows that, at any time $t\geq 1$,
\begin{equation*}
    (\veL\veu-\veC\veu\circ\veu)(t,\cdot)\geq (\veL\veu-D\veu^{\circ(1+p)})(t,\cdot)\quad\text{in }\R^n.
\end{equation*}
The right-hand side defines a new semilinear reaction term which is cooperative, contrarily to the
original KPP reaction term. By the comparison principle of Proposition \ref{prop:comparison_principle}, any solution of 
$\cbQ\underline{\veu}=-D\underline{\veu}^{\circ(1+p)}$ with
$\vez\leq\underline{\veu}(1,\cdot)\leq\veu(1,\cdot)$ will satisfy $\vez\leq\underline{\veu}\leq\veu$ 
globally in space at any time $t\geq 1$.

Therefore, easing the notations and shifting the initial time, it only remains to prove that
any solution $\vev$ of $\cbQ\vev+D\vev^{\circ(1+p)}=\vez$ with nonnegative nonzero initial 
condition $\vev_{\upini}$ satisfies, for every $R>0$,
\begin{equation}\label{eq:Cauchy_persistence_cooperative_system}
    \liminf_{t\to+\infty}\min_{i\in[N]}\min_{|x|\leq R}v_i(t,x)>0.
\end{equation}
Since it suffices to prove \eqref{eq:Cauchy_persistence_cooperative_system} for large values of $R$, we fix
with no loss of generality a value of $R$ so large that $\lambda_{1,\upDir}(B(0,R+1))<0$. This choice is made possible
by the assumption $\lambda_1<0$.

\begin{proof}[Proof of the hair-trigger effect for the cooperative semilinear system]
    At time $t=1$, the solution $\vev$ of 
    \begin{equation*}
	\begin{cases}
	    \cbQ\vev+D\vev^{\circ(1+p)}=\vez & \text{in }(0,+\infty)\times\R^n, \\
	    \vev(\vez,\cdot)=\vev_{\upini} & \text{in }\R^n,
	\end{cases}
    \end{equation*}
    satisfies $\vev(1,x)\gg\vez$ for all $x\in\R^n$. In particular, by continuity,
    there exists $\varepsilon>0$ such that $\vev(1,\cdot)\geq\varepsilon\veo$ in $\overline{B(0,R+1)}$.

    Let $\lambda=\lambda_{1,\upDir}(B(0,R+1))$.
    We define $\underline{\vev}:(t,x)\mapsto\varepsilon\veu_{\upDir}(t,x)$ 
    where $\veu_{\upDir}$ solves in $\R\times\overline{B(0,R+1)}$:
    \begin{equation*}
	\begin{cases}
	    \cbQ\veu_{\upDir}=\lambda\veu_{\upDir} & \text{in }\R\times B(0,R+1), \\
	    \veu_{\upDir}=\vez & \text{on }\R\times\partial B(0,R+1), \\
	    \veu_{\upDir}\text{ time-periodic}, \\
	    \displaystyle\max_{(t,x)\in[0,T]\times \overline{B(0,R+1)}}\max_{i\in[N]}u_{\upDir,i}(t,x)=1.
	\end{cases}
    \end{equation*}
    and is extended to $\R\times\R^n$ by setting $\veu_{\upDir}(\cdot,x)=\vez$ if $|x|>R+1$.

    By construction, $\underline{\vev}(1,\cdot)\leq\vev(1,\cdot)$ in $B(0,R+1)$, and $\cbQ\underline{\vev}=\lambda\underline{\vev}$
    in $(1,+\infty)\times B(0,R+1)$.
    Up to reducing $\varepsilon$,
    \begin{equation*}
	-\lambda\geq D\varepsilon^p\quad\text{in }\R\times B(0,R+1),
    \end{equation*}
    whence, by virtue of $\vez\leq\veu_{\upDir}\leq\veo$,
    \begin{equation*}
	\cbQ\underline{\vev}\leq -D\underline{\vev}^{\circ(1+p)}\quad\text{in }\R\times B(0,R+1).
    \end{equation*}
    Therefore, by applying the comparison principle (not directly Proposition \ref{prop:comparison_principle}, but a similar
    statement for problems in a smooth bounded domain, larger than the spatial periodicity cell, with Dirichlet boundary 
    conditions, \textit{cf.} \cite[Proposition 2.2]{Girardin_Mazari_2022}),
    we deduce that $\vev\geq\underline{\vev}$ in $[1,+\infty)\times B(0,R+1)$.

    The continuity of $\veu_{\upDir}$ and its positivity in $\R\times\overline{B(0,R)}\subset \R\times B(0,R+1)$ end the proof of
    \eqref{eq:Cauchy_persistence_cooperative_system}.
\end{proof}

\begin{rem}
    Another proof following the ideas of our previous paper \cite[Theorem 1.3]{Girardin_2016_2} and relying upon the Harnack
    inequality of Proposition \ref{prop:harnack_inequality} instead of the present-time comparison of 
    Corollary \ref{cor:comparison_of_components_in_real_time} is possible. It is much more lengthy and technical.
    In this paper we favor the shortest proof, made possible by the novel Corollary 
    \ref{cor:comparison_of_components_in_real_time}.
\end{rem}
}

\red{With similar arguments and an additional optimization of $\varepsilon$,
we can obtain with the same method a uniform positivity bound for entire solutions, stated below without proof.}

\begin{prop}
    Assume $\lambda_1<0$. Then there exists a constant $\nu>0$ such that all nonnegative nonzero globally bounded 
    entire solutions $\veu$ of \eqref{sys:KPP} satisfying 
    \begin{equation*}
	\min_{i\in[N]}\inf_{(t,x)\in\R\times\R^n}u_i(t,x)>0
    \end{equation*}
    actually satisfy
    \begin{equation*}
	\veu\geq\nu\veo\quad\text{in }\R\times\R^n.
    \end{equation*}
\end{prop}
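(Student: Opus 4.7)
The plan is to adapt the proof of Theorem \ref{thm:hair_trigger_effect} with two modifications: since $\veu$ is defined on all of $\R$, the initial time of the comparison argument can be sent to $-\infty$, and the amplitude $\varepsilon$ of the Dirichlet sub-solution is pushed to its maximal admissible value, so that the resulting lower bound depends only on universal constants of the system.

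Since $\veu$ is bounded by $K\veo$ (Corollary \ref{cor:global_bound_entire}), applying Corollary \ref{cor:comparison_of_components_in_real_time} to the shifted Cauchy problems starting at arbitrary $t_0 \in \R$ with $M = K$ gives $u_j(t, x) \leq \kappa_K u_i(t, x)^p$ on $[t_0 + 1, +\infty) \times \R^n$; since $t_0$ is arbitrary, this holds throughout $\R \times \R^n$ with constants $p \in (0, 1)$ and $\kappa_K > 0$ independent of $\veu$. Consequently $\veC \veu \circ \veu \leq D \veu^{\circ(1+p)}$ for a universal $D > 0$, and $\veu$ is a super-solution of the cooperative system $\cbQ \vew + D \vew^{\circ(1+p)} = \vez$. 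Now fix $R > 0$ large enough that $\lambda := \lambda_{1,\upDir}(B(0, R+1)) < 0$, let $\veu_{\upDir}$ be the corresponding positive time-periodic Dirichlet principal eigenfunction normalized as in the proof of Theorem \ref{thm:hair_trigger_effect}, and set $\varepsilon^\star := (-\lambda / D)^{1/p}$: this is the largest value of $\varepsilon$ for which $\varepsilon \veu_{\upDir}$ (extended by $\vez$ outside the ball) is a time-periodic sub-solution of the cooperative Dirichlet problem in $B(0, R+1)$, and it depends only on $\lambda$ and $D$, not on $\veu$.

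The core step is to prove $\veu \geq \varepsilon^\star \veu_{\upDir}$ in $\R \times \overline{B(0, R+1)}$. By assumption, $\eta := \min_i \inf_{\R \times \R^n} u_i > 0$. Let $\vev^\star$ denote the unique positive time-periodic solution of the cooperative Dirichlet problem in $B(0, R+1)$; by standard monotone iteration starting from $\varepsilon^\star \veu_{\upDir}$, it satisfies $\vev^\star \geq \varepsilon^\star \veu_{\upDir}$. For each $t_0 \in \R$, let $\vew_{t_0}$ solve the cooperative Dirichlet Cauchy problem on $[t_0, +\infty) \times B(0, R+1)$ starting from $\min(\eta, \varepsilon^\star) \veu_{\upDir}(t_0, \cdot)$; this initial datum being (the $t_0$-slice of) a time-periodic sub-solution, cooperative comparison forces $\vew_{t_0}(t + T, \cdot) \geq \vew_{t_0}(t, \cdot)$ for $t \geq t_0$ and $\vew_{t_0}(t, \cdot) \to \vev^\star(t, \cdot)$ as $t - t_0 \to +\infty$. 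Applying the Dirichlet comparison principle to $\veu$ and $\vew_{t_0}$ --- noting that $\veu \geq \vez = \vew_{t_0}$ on $\R \times \partial B(0, R+1)$ and $\veu(t_0, \cdot) \geq \eta \veo \geq \min(\eta, \varepsilon^\star) \veu_{\upDir}(t_0, \cdot)$ --- gives $\veu \geq \vew_{t_0}$ on $[t_0, +\infty) \times \overline{B(0, R+1)}$; fixing $(t, x)$ and letting $t_0 \to -\infty$ yields $\veu(t, x) \geq \vev^\star(t, x) \geq \varepsilon^\star \veu_{\upDir}(t, x)$. Setting $\nu_0 := \varepsilon^\star \min_{i \in [N]} \min_{(t, x) \in [0, T] \times \overline{B(0, R)}} u_{\upDir, i}(t, x) > 0$ and running the same argument with balls centered at arbitrary $x^\star \in \R^n$ (whose shifted coefficients belong to a compact family by spatial periodicity) yields a single $\nu > 0$ valid uniformly on $\R \times \R^n$.

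The main obstacle is establishing the properties of $\vev^\star$ used above: existence, uniqueness among positive time-periodic solutions, the inequality $\vev^\star \geq \varepsilon^\star \veu_{\upDir}$, and the monotone convergence of $\vew_{t_0}$. For scalar logistic equations these are classical; for cooperative KPP systems with time-periodic coefficients they follow from monotone-dynamics arguments in the spirit of Hess, but their transposition to the Dirichlet setting at hand requires some care.
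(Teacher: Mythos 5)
The paper states this proposition without proof, saying only that it follows from the method of Theorem \ref{thm:hair_trigger_effect} ``with an additional optimization of $\varepsilon$'', and your proposal implements exactly that strategy: restart the present-time comparison of Corollary \ref{cor:comparison_of_components_in_real_time} at arbitrary $t_0$ (legitimate, since Corollary \ref{cor:global_bound_entire} gives the uniform bound $K$, so the constants $p$, $D$ are solution-independent), view $\veu$ as a super-solution of the cooperative system $\cbQ\vew=-D\vew^{\circ(1+p)}$ on all of $\R\times\R^n$, compare it in the cylinder $\R\times B(0,R+1)$ with sub-solutions built from the Dirichlet eigenfunction, push the amplitude to its maximal admissible value $\varepsilon^\star=(-\lambda/D)^{1/p}$, and send the initial time to $-\infty$. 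All the comparison steps you write are correct, and the final uniformization over centers via spatial periodicity is fine.

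The one genuine issue is that the step which actually makes the bound independent of the solution --- the identification $\vew_{t_0}(t,\cdot)\to\vev^\star(t,\cdot)$ together with $\vev^\star\geq\varepsilon^\star\veu_{\upDir}$ --- rests entirely on the existence, \emph{uniqueness} and global attractivity of the positive time-periodic solution of the cooperative Dirichlet problem, which you invoke ``in the spirit of Hess'' but do not prove. This is the load-bearing point: without it, your comparison only returns $\veu\geq\min(\eta,\varepsilon^\star)\veu_{\upDir}$ with $\eta=\min_i\inf u_i$ depending on $\veu$, which is precisely what the proposition must rule out. The missing facts are indeed classical for strongly monotone, strictly subhomogeneous periodic semiflows (the period map of the cooperative irreducible Dirichlet problem is strongly monotone and $\vev\mapsto\veL\vev-D\vev^{\circ(1+p)}$ is strictly subhomogeneous), so the gap is fillable, but as written it is an appeal to an external theory rather than an argument, and the boundary behavior (Hopf-type lemmas on $\partial B(0,R+1)$) is exactly where the care is needed. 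Note that you can bypass it entirely with the paper's own toolkit, which is presumably the intended ``optimization of $\varepsilon$'': set $\bar\varepsilon=\sup\{\varepsilon>0\ |\ \veu\geq\varepsilon\veu_{\upDir}\text{ in }\R\times\overline{B(0,R+1)}\}$, positive because $\veu\geq\eta\veo$ and $\veu_{\upDir}\leq\veo$, and suppose $\bar\varepsilon<\varepsilon^\star$. Since $\veu\geq\eta\veo$ while $\veu_{\upDir}$ vanishes on $\partial B(0,R+1)$, near-contact points of $\veu-\bar\varepsilon\veu_{\upDir}$ remain in a compact subset of the open ball; the time-shift and compactness extraction used in the proof of Theorem \ref{thm:extinction} (case $\lambda_1'=0$) then produces an entire solution touching the sub-solution $\bar\varepsilon\veu_{\upDir}$ at an interior point, and since $\bar\varepsilon<\varepsilon^\star$ makes this sub-solution strict inside the ball, the strong comparison principle gives a contradiction. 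Hence $\veu\geq\varepsilon^\star\veu_{\upDir}$, and $\nu$ is obtained as in your last step; no uniqueness or attractivity of $\vev^\star$, and no Hopf lemma, is needed.
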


\red{Subsequently, a diagonal extraction and a limiting argument show that the same uniform bound applies, asymptotically,
to solutions of the Cauchy problem.}

\begin{cor}
    Assume $\lambda_1<0$. Then all solutions $\veu$ of
    the Cauchy problem \eqref{sys:KPP}--\eqref{IC} with nonzero initial conditions $\veu_{\upini}$ satisfy
    \begin{equation*}
	\liminf_{t\to+\infty}\min_{i\in[N]}\min_{|x|\leq R}u_i(t,x)\geq\nu\quad\text{for all }R>0.
    \end{equation*}
\end{cor}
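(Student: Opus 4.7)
The plan is to argue by contradiction, extracting via a diagonal procedure in time a bounded entire solution whose uniform positivity would, via the preceding Proposition, contradict the failure of the corollary.

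Suppose the conclusion fails: there exist $R>0$, $i^\star\in[N]$, $\delta_0>0$, and sequences $t_k\to+\infty$, $x_k\in\overline{B(0,R)}$ such that $u_{i^\star}(t_k,x_k)\leq\nu-\delta_0$ for all $k$. Up to extraction, $x_k\to x_\infty$. I set $\veu^k(t,x)=\veu(t+t_k,x)$; by Proposition \ref{prop:global_bound_absorbing_set} the family $(\veu^k)$ is eventually uniformly bounded by $K\veo$, and standard interior parabolic Schauder estimates applied componentwise to the scalar equations $\caP_i u_i^k=\sum_j l_{i,j}u_j^k-(\veC\veu^k)_i u_i^k$ yield local $\caC^{1+\delta/2,2+\delta}$ bounds (with $\delta$ as in \ref{ass:smooth_periodic}). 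A diagonal extraction then produces a subsequence converging locally uniformly in $\caC^{1,2}$ to a bounded entire solution $\veu^\infty$ of \eqref{sys:KPP} satisfying $u^\infty_{i^\star}(0,x_\infty)\leq\nu-\delta_0<\nu$. If one shows that $\veu^\infty$ is uniformly positive on $\R\times\R^n$, the preceding Proposition gives $\veu^\infty\geq\nu\veo$, a contradiction.

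The main obstacle is thus uniform positivity of $\veu^\infty$. To establish it, I would exploit the space-periodicity of the coefficients. For any lattice shift $\ell\in\prod_{\alpha=1}^nL_\alpha\Z$, the translated function $\veu(\cdot,\cdot+\ell)$ solves \eqref{sys:KPP} with nonzero initial condition $\veu_{\upini}(\cdot+\ell)$, so Theorem \ref{thm:hair_trigger_effect} gives $\liminf_{t\to+\infty}\min_i\inf_{|x-\ell|\leq R'}u_i(t,x)>0$ for every $R'>0$. Passing to the limit along $t_k$ yields $\veu^\infty(t,x)\gg\vez$ at every $(t,x)\in\R\times\R^n$, hence pointwise positivity. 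The delicate step is to make the lower bound uniform in $\ell$: I would revisit the sub-solution construction of the proof of Theorem \ref{thm:hair_trigger_effect} centered at $\ell$. The Dirichlet principal eigenvalues $\lambda_{1,\upDir}(B(\ell,R'+1))$ are $L$-periodic in $\ell$ (by periodicity of the coefficients under lattice shifts) and tend to $\lambda_1<0$ as $R'\to+\infty$, hence are bounded above by some $\lambda^\star<0$ uniformly in $\ell$ for $R'$ large enough; the corresponding normalized time-periodic Dirichlet eigenfunctions admit a uniform positive lower bound on the concentric ball $B(\ell,R')$. Combining this with the pointwise positivity of $\veu^\infty$ and the comparison principle of Proposition \ref{prop:comparison_principle} applied on $[t_0,+\infty)\times B(\ell,R'+1)$ for $t_0\to-\infty$ to the cooperative sub-system $\cbQ\underline{\veu}+D\underline{\veu}^{\circ(1+p)}\leq\vez$, one obtains a uniform lower bound on $\veu^\infty$, in the same spirit as the ``additional optimization of $\varepsilon$'' alluded to in the statement of the preceding Proposition. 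Covering $\R^n$ by translates $B(\ell,R')$ then delivers uniform positivity of $\veu^\infty$ and closes the argument.
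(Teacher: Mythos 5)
Your proposal follows exactly the route the paper compresses into a single sentence (diagonal time-translation extraction, then application of the preceding Proposition) and correctly identifies the step that sentence hides: the extracted entire solution $\veu^\infty$ must be shown to be \emph{uniformly} positive in $(t,x)\in\R\times\R^n$, not merely pointwise positive, before the Proposition can be invoked. Your sketch of how to achieve this—translated hair-trigger theorem giving, for each lattice $\ell$, positivity of $\veu^\infty$ on $\R\times\overline{B(\ell,R'+1)}$; $L$-periodicity of the Dirichlet eigenvalue and of the eigenfunction lower bound on the balls $B(\ell,R')$; and a sliding/maximization of $\varepsilon$ in the Dirichlet sub-solution comparison to make the resulting lower bound $\ell$-independent—is the right mechanism, with the optimization step referenced rather than detailed, consistently with the paper's own treatment of the Proposition.
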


\subsection{Conditional persistence of large solutions (Theorem \ref{thm:persistence_large_solutions})}
\label{sec:proof_persistence_large_solutions}
Let us recall the assumptions of Theorem \ref{thm:persistence_large_solutions}. There exists $z\in\R^n$ such that:
\begin{enumerate}
    \item $\lambda_{1,z}<0$;
    \item $\zeta\in(0,2)\mapsto\lambda_{1,\zeta z}$ is increasing in a neighborhood of $1$;
    \item there exists $C>0$, $B\in\R$ such that, for all $x\in\R^n$ such that $z\cdot x\leq B$, 
        $\min_{i\in[N]}u_{\upini,i}(x)\geq C^{-1}\upe_{z}(x)$.
\end{enumerate}

\red{Just as in the proof of Theorem \ref{thm:hair_trigger_effect},} by Corollary \ref{cor:comparison_of_components_in_real_time}, 
there exists $D>0$ and $p\in(0,1)$ such that, starting from $t=1$,
\begin{equation*}
    \veL\veu-(\veC\veu)\circ\veu\geq \veL\veu-D\veu^{\circ(1+p)}.
\end{equation*}
By virtue of the comparison principle \red{of Proposition \ref{prop:comparison_principle}}, any solution of 
$\cbQ\underline{\veu}=-D\underline{\veu}^{\circ(1+p)}$ with
$\vez\leq\underline{\veu}(1,\cdot)\leq\veu(1,\cdot)$ will satisfy $\underline{\veu}\leq\veu$ globally in space at any time $t\geq 1$.
Therefore it only remains to prove that:
\begin{enumerate}
    \item $\min_{i\in[N]}u_i(1,x)\geq \widetilde{C}^{-1}\upe_z(x)$ in $\{z\cdot x\leq\widetilde{B}\}$ for some $\widetilde{C}>0$, $\widetilde{B}\in\R$;
    \item the persistence result is true for the cooperative system 
	$\cbQ\vev=-D\vev^{\circ(1+p)}$.
\end{enumerate}

\begin{proof}[Proof of the exponential estimates at $t=1$]
    There exists a large $E>0$ such that each $u_i$ admits a (rough) sub-solution $v_i\leq u_i$ satisfying
    \begin{equation*}
        \begin{cases}
    	\caP_i v_i = -E v_i & \text{in }(0,+\infty)\times\R^n \\
    	v_i(0,x)= \frac{1}{C}\upe_z(x) & \text{in }\{z\cdot x\leq B\} \\
    	v_i(0,x)=0 & \text{in }\{z\cdot x>B\}.
        \end{cases}
    \end{equation*}
    Such sub-solutions can be related to solutions of
    \begin{equation*}
        \begin{cases}
    	(\upe_{-z}\caP_i)(\upe_z w_i) = -E w_i & \text{in }(0,+\infty)\times\R^n \\
    	w_i(0,x)= \frac{1}{C} & \text{in }\{z\cdot x\leq B\} \\
    	w_i(0,x)=0 & \text{in }\{z\cdot x>B\}
        \end{cases}
    \end{equation*}
    through the formula $v_i=\upe_z w_i$. Hence we only have to show that, for each $i\in[N]$, $\liminf_{z\cdot x\to-\infty}w_i(1,x)>0$.
    Recall that the operator $\upe_{-z}\caP_i\upe_z$ has the following form:
    \begin{equation*}
        \upe_{-z}\caP_i\upe_z = \caP_i-\left(2A_i z\cdot\nabla+z\cdot A_i z+\nabla\cdot\left(A_i z\right)-q_i\cdot z\right).
    \end{equation*}
    Let $(x_k)_{k\in\N}$ such that $z\cdot x_k\to-\infty$ and such that $w_i(1,x_k)\to\liminf_{z\cdot x\to-\infty} w_i(1,x)$. 
    By classical parabolic estimates \cite{Lieberman_2005} and a diagonal extraction, up to a subsequence, the sequence of 
    functions $(t,x)\mapsto w_i(t,x-x_n)$ converges locally uniformly to the solution $w_i^\infty$ of
    \begin{equation*}
        \begin{cases}
    	(\upe_{-z}\caP_i)(\upe_z w_i^\infty) = -E w_i^\infty & \text{in }(0,+\infty)\times\R^n \\
    	w_i^\infty(0,\cdot)= \frac{1}{C} & \text{in }\R^n
        \end{cases}
    \end{equation*}
    By constructing yet another sub-solution that solves a mere ODE of the form $w'=-Fw$, we deduce easily that $w_i^\infty(1,0)>0$.
    This ends the proof of this step.
\end{proof}

The last part of the proof is its core and is a straightforward adaptation of the proof in the scalar case
\cite{Nadin_2010}. Easing the notations, we consider the cooperative system $\cbQ\vev=-D\vev^{\circ(1+p)}$ with
nonnegative globally bounded initial conditions $\vev_{\upini}$ satisfying $\min_{i\in[N]}v_{\upini,i}(x)\geq C^{-1}\upe_z(x)$ 
in $\{z\cdot x\leq B\}$. For this system, let us prove that, for any fixed $R>0$,
\begin{equation*}
    \liminf_{t\to+\infty}\min_{i\in[N]}\min_{|x|\leq R}v_i(t,x)>0.
\end{equation*}

\begin{proof}[Proof of the persistence result for the cooperative semilinear system]
    Let $\zeta\in(0,1)$ such that 
    \begin{equation*}
	\lambda_{1,z}<\lambda_{1,(1+\zeta)z}<0.
    \end{equation*}
    We define
    \begin{equation*}
	\underline{\veu}=\frac{1}{C}\upe_z\veu_z-A\upe_{(1+\zeta)z}\veu_{(1+\zeta)z}
    \end{equation*}
    and our goal is to use $[\underline{\veu}]^+$ as a (generalized) sub-solution in 
    Proposition \ref{prop:generalized_comparison_principle},
    where the notation $[\cdot]^+$ refers to the component-by-component positive part of a vector.

    On one hand, whatever $A>0$ is, $\underline{\veu}(0,\cdot)\leq\vev_{\upini}$ in $\{z\cdot x\leq B\}$.
    On the other hand, $A>0$ can be chosen so large that $\underline{\veu}(0,\cdot)\leq\vev_{\upini}$ in $\{z\cdot x>B\}$.
    Hence, with such an appropriate choice of $A$, 
    \begin{equation*}
	[\underline{\veu}(0,\cdot)]^+\leq\vev_{\upini}\quad\text{globally in }\R^n.
    \end{equation*}

    \red{In order to apply the comparison principle of Proposition \ref{prop:generalized_comparison_principle} to
    the generalized sub-solution $[\underline{\veu}]^+$, it only remains to verify that 
    $\cbQ\vez\leq-D\vez^{\circ(1+p)}$ and $\cbQ\underline{\veu}\leq-D\underline{\veu}^{\circ(1+p)}$ hold true
    globally in $(0,+\infty)\times\R^n$. Note that this requires an extension of $\vev\mapsto\vev^{\circ(1+p)}$
    outside $[\vez,\vei)$, of class $\caC^1$. Hence we set $v^{1+p}=-|v|^{1+p}=v|v|^p$ if $v<0$, and consistently
    $\vev^{\circ(1+p)}=(v_i^{1+p})_{i\in[N]}$.

    The inequality $\cbQ\vez\leq-D\vez^{\circ(1+p)}$ is obvious.}

    Let $\varepsilon\in(0,|\lambda_{1,z}|)$. By increasing $A$ if necessary, 
    \begin{equation*}
	D\left(\left[\underline{\veu}\right]^+\right)^{\circ p}\leq\varepsilon\veo.
    \end{equation*}
    Therefore
    \begin{align*}
	\cbQ\underline{\veu} & =\frac{1}{C}\lambda_{1,z}\upe_z\veu_z -A\lambda_{1,(1+\zeta)z}\upe_{(1+\zeta)z}\veu_{(1+\zeta)z} \\
	& \leq -\varepsilon\underline{\veu}+\varepsilon\underline{\veu}+\lambda_{1,z}\left( \frac{1}{C}\upe_z\veu_z-A\upe_{(1+\zeta)z}\veu_{(1+\zeta)z} \right) \\
	& \leq -D\left(\left[\underline{\veu}\right]^+\right)^{\circ p}\underline{\veu}+(\varepsilon+\lambda_{1,z})\underline{\veu} \\
	& \leq -D\underline{\veu}^{\circ(1+p)}.
    \end{align*}

    Hence, by virtue of the comparison principle applied to the semilinear
    cooperative operator $\cbQ\vev+D\vev^{\circ(1+p)}$, the inequality $\vev\geq\left[\underline{\veu}\right]^+$ 
    is satisfied globally in $[0,+\infty)\times\R^n$.

    By the special form of $\underline{\veu}$, there exists $x_0\in\R^n$ such that 
    \begin{equation*}
	\min_{i\in[N]}\inf_{t\geq 0}\underline{u}_i(t,x_0)>0. 
    \end{equation*}
    Consequently, 
    \begin{equation*}
	\min_{i\in[N]}\inf_{t\geq 0}v_i(t,x_0)>0. 
    \end{equation*}
    Now, up to increasing without loss of generality $R$ so that $x_0\in B(0,R)$,
    the Harnack inequality of Proposition \ref{prop:harnack_inequality} yields the existence of a constant $\kappa>0$ 
    such that, for all $t\geq 1$,
    \begin{equation*}
	\min_{i\in[N]}\min_{x\in \overline{B(0,R)}}v_i(t+1,x)
	\geq \kappa\max_{i\in[N]}\max_{x\in \overline{B(0,R)}}v_i(t,x)
    \end{equation*}
    Subsequently,
    \begin{equation*}
	\min_{i\in[N]}\min_{x\in \overline{B(0,R)}}v_i(t+1,x)
	\geq \kappa\min_{i\in[N]}v_i(t,x_0)
	\geq \kappa\min_{i\in[N]}\inf_{t\geq 0}v_i(t,x_0)>0.
    \end{equation*}
    This ends the proof.
\end{proof}

\subsection[The Freidlin--G\"{a}rtner-type formula]{The Freidlin--G\"{a}rtner-type formula (Theorem \ref{thm:FG_formula})}

In this section, we assume that $\lambda_1<0$ and $\veu_{\upini}$ is compactly supported, and we prove
the spreading speed estimates \eqref{eq:spreading_subestimate} and \eqref{eq:spreading_superestimate}.

To this end, we fix once and for all $e\in\mathbb{S}^{n-1}$.

First, we confirm that the minima involved in the definitions of $c^\star_e$ and $c^{\upFG}_e$ are
indeed well-defined.

\begin{lem}\label{lem:minimal_c}
    \red{The infimum $c^\star_e\in[0,+\infty)$ of the set $\{ c^\mu_e\ |\ \mu>0\}$ is a minimum attained at a unique $\mu^\star>0$.

    Consequently, $c^\star_e>0$.}

    Moreover, for any $c>c^\star_e$, there exist $\mu_1,\mu_2>0$ such that $\mu_1<\mu_2$ and $c^{\mu_1}_e=c^{\mu_2}_e=c$,
    whereas for any $c\in(0,c^\star_e)$, there exists no $\mu>0$ such that $c^\mu_e=c$.
\end{lem}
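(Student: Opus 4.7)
The plan is to analyze the function $f:(0,+\infty)\to\R$ defined by $f(\mu)=c^\mu_e=-\lambda_{1,-\mu e}/\mu$. It is continuous (in fact real-analytic by perturbation theory for isolated eigenvalues) and, since $\lambda_{1,-\mu e}\leq \lambda_1<0$, strictly positive on $(0,+\infty)$. I would split the plan into studying the limits of $f$ at the boundary of its domain, then extracting existence, uniqueness and monotonicity from strict convexity of $\mu\mapsto -\lambda_{1,-\mu e}$.

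For the limit at $\mu\to 0^+$, it suffices to observe that $\lambda_{1,-\mu e}\to \lambda_{1,0}=\lambda_1'\leq\lambda_1<0$, so $f(\mu)\to+\infty$. The main obstacle is the limit at $\mu\to+\infty$: strict concavity and coercivity of $z\mapsto\lambda_{1,z}$ alone are \emph{not} enough, since a strictly concave function with linear decay would leave $f$ bounded. One needs the specific quadratic structure of $\upe_{-z}\cbQ\upe_z$, whose zeroth-order coefficient contains the term $-z\cdot A_i z$, bounded above by $-\eta|z|^2$ for some $\eta>0$ by uniform ellipticity \ref{ass:ellipticity}. Using the periodic principal eigenfunction $\veu_0$ of $\cbQ$ as a test function in the standard upper bound on the principal eigenvalue of cooperative fully coupled systems then yields $\lambda_{1,-\mu e}\leq -\eta\mu^2+C(1+\mu)$ for some $C>0$, whence $f(\mu)\geq \eta\mu-C(\mu^{-1}+1)\to+\infty$.

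With both limits at hand, continuity of $f$ implies that it attains its infimum at some interior point $\mu^\star>0$, where necessarily $f'(\mu^\star)=0$. For uniqueness, set $h(\mu)=-\lambda_{1,-\mu e}$: by strict concavity of $\lambda_{1,\cdot}$, $h$ is strictly convex. The identity $f'(\mu)=\psi(\mu)/\mu^2$ with $\psi(\mu)=\mu h'(\mu)-h(\mu)$ gives $\psi'(\mu)=\mu h''(\mu)$, nonnegative and not identically zero on any subinterval by strict convexity of $h$. Hence $\psi$ is strictly increasing, has at most one zero, so $f$ has at most one critical point, and $\mu^\star$ is unique. Moreover the sign change of $\psi$ at $\mu^\star$ yields strict monotonicity of $f$ on each of $(0,\mu^\star)$ and $(\mu^\star,+\infty)$. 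Positivity of $c^\star_e=f(\mu^\star)$ is then immediate from positivity of $f$.

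Finally, the strict monotonicity of $f$ together with the boundary values ($+\infty$ at both endpoints, $c^\star_e$ at $\mu^\star$) reduces the remaining assertions to the intermediate value theorem: for $c>c^\star_e$, exactly one $\mu_1\in(0,\mu^\star)$ and one $\mu_2\in(\mu^\star,+\infty)$ satisfy $c^{\mu_i}_e=c$, while for $c\in(0,c^\star_e)$ the bound $c^\mu_e\geq c^\star_e>c$ rules out any solution.
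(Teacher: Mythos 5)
Your proof is correct, and it takes a different (though closely related) route from the paper's. The paper fixes $c$ and studies the auxiliary strictly convex function $\psi_c(\mu)=-\lambda_{1,-\mu e}-c\mu$: its zeros are exactly the solutions of $c^\mu_e=c$, so counting them reduces to counting the sign changes of a convex function with $\psi_c(0)>0$ and $\psi_c(+\infty)=+\infty$, and the threshold $\hat c$ is then identified \emph{a posteriori} with $\min_\mu c^\mu_e$. You instead analyze $f(\mu)=c^\mu_e$ directly: setting $h(\mu)=-\lambda_{1,-\mu e}$, you show $f'(\mu)=\bigl(\mu h'(\mu)-h(\mu)\bigr)/\mu^2$ has exactly one zero because $\mu\mapsto\mu h'(\mu)-h(\mu)$ has derivative $\mu h''(\mu)\geq 0$ (strictly except at isolated points, by analyticity of $\mu\mapsto\lambda_{1,-\mu e}$), so $f$ is strictly decreasing then strictly increasing, and the level-set claims follow by the intermediate value theorem. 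The paper's approach needs only continuity and strict convexity (no derivative of the eigenvalue), while yours needs the standard analyticity of the principal eigenvalue but gives the unimodality of $f$ more transparently. You also correctly flag that coercivity of $z\mapsto\lambda_{1,z}$ alone is insufficient for $f(\mu)\to+\infty$ as $\mu\to+\infty$, and you rederive the quadratic upper bound on $-\lambda_{1,-\mu e}$ from the $-z\cdot A_iz$ term via a test-function estimate; the paper simply cites \cite[Corollary 3.13]{Girardin_Mazari_2022} for this. Both arguments are complete.
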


\begin{proof}
    We consider the function $\psi:\mu\in[0,+\infty)\mapsto-\lambda_{1,-\mu e}-c\mu$.
    By continuity and strict concavity of $\mu\mapsto\lambda_{1,-\mu e}$ 
    \cite[Corollary 3.6]{Girardin_Mazari_2022}
    and by virtue of the quadratic growth of $|\lambda_{1,-\mu e}|$ as $\mu\to +\infty$
    \cite[Corollary 3.13]{Girardin_Mazari_2022}),
    $\psi$ is continuous, strictly convex, and satisfies
    \begin{equation*}
	\psi(0)=-\lambda_1'\geq-\lambda_1>0,\quad\lim_{\mu\to +\infty}\psi(\mu)=+\infty.
    \end{equation*}
    It admits a global minimum in $[0,+\infty)$. Since
    \begin{equation*}
	\psi(0)=-\lambda_1',\quad\psi(1)=-\lambda_{1,-e}-c,
    \end{equation*}
    the minimum is negative if $c$ is large enough. Since
    \begin{equation*}
	\psi(\mu)\geq-\lambda_1>0\quad\text{for any }\mu\geq 0\quad\text{if }c=0,
    \end{equation*}
    the minimum is positive if $c$ is close enough to $0$. Moreover, the minimum is either
    located at $\mu=0$, in which case its value is $-\lambda_1'>0$, or it is located at some $\mu^\star>0$,
    in which case its value is decreasing with respect to $c$. By continuity, the minimum is, as a function of
    $c$, positive and constant in some interval $[0,c^\dagger)$, with $c^\dagger\geq 0$, and decreasing in 
    $[c^\dagger,+\infty)$, with a positive value at $c=c^\dagger$ and with limit $-\infty$ as $c\to+\infty$.

    By continuity, strict convexity, strict monotonicity, there exists a threshold $\hat{c}>0$
    such that the equation $\psi(\mu)=0$ admits therefore:
    \begin{itemize}
	\item no solution if $c\in(0,\hat{c})$;
	\item exactly one solution $\mu^\star$ if $c=\hat{c}$;
	\item exactly two isolated solutions $\mu^\star_1<\mu^\star_2$ if $c>\hat{c}$.
    \end{itemize}
    In view of the sign of $\psi(0)$, these solutions, if any, are positive.
    Hence the image of $\mu\in(0,+\infty)\mapsto-\lambda_{1,-\mu e}/\mu$
    contains $[\hat{c},+\infty)$ and does not contain $[0,\hat{c})$; in other words,
    it is exactly $[\hat{c},+\infty)$, whence $\hat{c}=c^{\mu^\star}_e=\min_{\mu>0}c^\mu_e$.
\end{proof}

\begin{lem}\label{lem:minimal_e}
    \red{The infimum $c^{\upFG}_e\in[0,+\infty)$ of the set $\{ c^\star_{e'}\ |\ e'\in\mathbb{S}^{n-1}, e\cdot e'>0 \}$
    is a minimum.

    Consequently, $c^{\upFG}_e>0$.}
\end{lem}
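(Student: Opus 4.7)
The plan is to reduce the infimum defining $c^{\upFG}_e$ to a minimum over a compact subset of the open hemisphere $\mathcal{H}=\{e'\in\Sn\ |\ e\cdot e'>0\}$, by exploiting both the continuity of $e'\mapsto c^\star_{e'}$ on $\Sn$ and the blow-up of the ratio $c^\star_{e'}/(e\cdot e')$ near the boundary $\{e\cdot e'=0\}$.

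First, I would establish that $e'\in\Sn\mapsto c^\star_{e'}$ is continuous. The map $(e',\mu)\in\Sn\times(0,+\infty)\mapsto c^\mu_{e'}=-\lambda_{1,-\mu e'}/\mu$ is continuous thanks to the continuity of $z\mapsto\lambda_{1,z}$ \cite[Corollary 3.6]{Girardin_Mazari_2022}. The crucial step is to show that the minimum in $\mu$ is attained on a compact subinterval uniformly in $e'\in\Sn$. On one hand, $-\lambda_{1,-\mu e'}$ converges to $-\lambda_1'>0$ as $\mu\to 0^+$ uniformly in $e'\in\Sn$ by continuity and compactness of the sphere, so $c^\mu_{e'}\to+\infty$ uniformly. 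On the other hand, the quadratic coercivity of $z\mapsto-\lambda_{1,z}$ at infinity \cite[Corollary 3.13]{Girardin_Mazari_2022} gives $c^\mu_{e'}\to+\infty$ uniformly as $\mu\to+\infty$. Since $c^\star_{e'}\leq c^1_{e'}=-\lambda_{1,-e'}$ remains bounded on the compact set $\Sn$, there exist $0<\mu_0<\mu_1<+\infty$ such that $c^\star_{e'}=\min_{\mu\in[\mu_0,\mu_1]}c^\mu_{e'}$ for every $e'\in\Sn$, and then the continuity of $e'\mapsto c^\star_{e'}$ follows from the classical fact that the minimum of a jointly continuous function over a fixed compact parameter set depends continuously on its other arguments.

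Second, Lemma \ref{lem:minimal_c} applied at each $e'\in\Sn$ ensures $c^\star_{e'}>0$; combined with continuity on the compact sphere $\Sn$, this yields uniform bounds $0<c_0\leq c^\star_{e'}\leq C_0<+\infty$. Third, I would conclude by analyzing the map $\Phi:e'\mapsto c^\star_{e'}/(e\cdot e')$, which is continuous on $\mathcal{H}$ and satisfies $\Phi(e)=c^\star_e\leq C_0$ and $\Phi(e')\geq c_0/(e\cdot e')\to+\infty$ as $e\cdot e'\to 0^+$. Hence there exists $\delta\in(0,1)$ small enough so that $\Phi(e')>\Phi(e)$ whenever $0<e\cdot e'<\delta$, and the infimum $c^{\upFG}_e$ coincides with $\min_{e'\in\mathcal{K}_\delta}\Phi(e')$ on the compact set $\mathcal{K}_\delta=\{e'\in\Sn\ |\ e\cdot e'\geq\delta\}$; continuity of $\Phi$ then guarantees attainment. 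Positivity follows immediately from $c^{\upFG}_e\geq c_0/1=c_0>0$.

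The main technical obstacle lies in the first step, specifically the uniformity in $e'\in\Sn$ of the blow-up of $c^\mu_{e'}$ at $\mu\to 0^+$ and $\mu\to+\infty$, which is what enables the localization of the minimizing $\mu$ and hence the continuity of $e'\mapsto c^\star_{e'}$. Once this uniform control is secured, the rest of the argument reduces to a routine compactness and coercivity argument on $\Sn$.
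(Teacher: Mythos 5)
Your argument is correct and follows exactly the route the paper takes: continuity of $e'\mapsto c^\star_{e'}$ on the compact sphere, uniform positivity, and blow-up of the ratio $c^\star_{e'}/(e\cdot e')$ near the boundary of the hemisphere, reducing the infimum to a minimum over a compact cap. The one difference is cosmetic: the paper merely asserts the continuity of $e'\mapsto c^\star_{e'}$, whereas you justify it via the uniform-in-$e'$ localization of the minimizing $\mu$ on a compact interval $[\mu_0,\mu_1]$ (using the coercivity of $\mu\mapsto -\lambda_{1,-\mu e'}/\mu$ at both ends), which is the correct way to fill in that detail.
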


\begin{proof}
    It is sufficient to show that $(c^\star_{e'})_{e'\in\mathbb{S}^{n-1}}$ is 
    uniformly positive and globally bounded, so that the quantity $c^\star_{e'}/e\cdot e'$
    is positive if $e\cdot e'>0$ and tends to $+\infty$ if in addition $e\cdot e'\to 0$.

    To this end, it suffices to observe that $e\in\mathbb{S}^{n-1}\mapsto c^\star_e$ is continuous, 
    defined on a compact set, and pointwise positive (due to $c^\star_e\geq-\lambda_1/\mu^\star>0$).
\end{proof}

The main idea of the forthcoming proof is to compare, again, \eqref{sys:KPP} to cooperative systems for
which the Freidlin--G\"{a}rtner formula is easier to establish. 

Just as in the proof of Theorem \ref{thm:hair_trigger_effect}, we fix the solution $\veu$ and
apply Corollary \ref{cor:comparison_of_components_in_real_time}. This makes it possible to compare from below 
and starting from $t=1$ the reaction term
$\veL\veu-(\veC\veu)\circ\veu$ to a cooperative reaction term $\veL\veu-D\veu^{\circ(1+p)}$, $D>0$. 
By the comparison principle \red{of Proposition \ref{prop:comparison_principle}} applied to this new cooperative
reaction term, any solution of $\cbQ\underline{\veu}=-D\underline{\veu}^{\circ(1+p)}$ with
$\vez\leq\underline{\veu}(1,\cdot)\leq\veu(1,\cdot)$ will satisfy $\underline{\veu}\leq\veu$ globally in space at any time $t\geq 1$.

Similarly, by nonnegativity and global boundedness of $\veC$ (\textit{cf.} \ref{ass:KPP}, \ref{ass:smooth_periodic}), 
we can compare from above and starting from $t=0$
the reaction term $\veL\veu-(\veC\veu)\circ\veu$ to another cooperative reaction term $\veL\veu-D'\veu^{\circ 2}$,
$D'>0$. By the comparison principle, any solution of $\cbQ\overline{\veu}=-D'\overline{\veu}^{\circ 2}$ with 
$\overline{\veu}(0,\cdot)\geq\veu(0,\cdot)$ will satisfy $\overline{\veu}\geq\veu$ globally in space at any time $t\geq 0$.

Consequently, \eqref{sys:KPP} can be compared from above and from below, in times large enough, to cooperative systems of the form 
\begin{equation*}
    \cbQ\vev=-g\vev^{\circ(1+q)}\quad\text{for some }g>0,\ q>0
\end{equation*}
\red{and with nonnegative nonzero compactly supported initial conditions $\vev_{\upini}$ (using the compact support of
$\veu_{\upini}=\veu(0,\cdot)$ and the pointwise positivity of $\veu(1,\cdot)$)}.
Since these semilinear systems share the same linear part $\cbQ\vev$, and since their nonlinear part $-g\vev^{\circ(1+q)}$
has a constant negative sign,
it is expected that they all satisfy the Freidlin--G\"{a}rtner formula, and therefore that they all have the same
spreading speed $c^{\upFG}_e$ (independent of $g$). Proving this claim will end our proof. To do so, we will use 
recent results from Du--Li--Shen \cite{Du_Li_Shen_2022} as well as a delicate construction from Berestycki--Hamel--Nadin 
\cite{Berestycki_Hamel_Nadin}.

\begin{lem}
    Let $q>0$ and $\vect{g}\in\caC_{\upp}^{\delta/2,\delta}(\R\times\R^n,(\vez,\vei))$.

Then any solution $\vev$ of $\cbQ\vev=-\diag(\vect{g})\vev^{\circ(1+q)}$ in $(0,+\infty)\times\R^n$
    with nonnegative nonzero compactly supported initial condition
    $\vev_{\upini}$ spreads in the direction $e$ at speed $c^{\upFG}_e$, namely 
    \begin{equation*}
	\liminf_{t\to+\infty}\min_{i\in[N]}\inf_{|x|\leq R}v_i(t,x+cte)>0\quad\text{for all }R>0\text{ and }c\in(0,c^{\upFG}_e),
    \end{equation*}
    \begin{equation*}
	\lim_{t\to+\infty}\max_{i\in[N]}\sup_{|x|\leq R}v_i(t,x+cte)=0\quad\text{for all }R>0\text{ and }c>c^{\upFG}_e.
    \end{equation*}
\end{lem}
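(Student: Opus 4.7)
The plan is to establish the upper estimate \eqref{eq:spreading_superestimate} and the lower estimate \eqref{eq:spreading_subestimate} separately. The key simplification with respect to the original system \eqref{sys:KPP} is that the system at hand is cooperative, so Proposition \ref{prop:comparison_principle} applies globally.

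For \eqref{eq:spreading_superestimate}, I would build exponential super-solutions of the linear equation. For each $\mu>0$ and each $e'\in\mathbb{S}^{n-1}$, the function
\begin{equation*}
    \overline{\vew}_{\mu,e'}(t,x)=C\upe^{-\lambda_{1,-\mu e'}t}\upe^{-\mu e'\cdot x}\veu_{-\mu e'}(t,x)
\end{equation*}
solves $\cbQ\overline{\vew}_{\mu,e'}=\vez$ by the eigenfunction identity, and in particular $\cbQ\overline{\vew}_{\mu,e'}\geq-\diag(\vect{g})\overline{\vew}_{\mu,e'}^{\circ(1+q)}$, so it is a super-solution of the semilinear equation satisfied by $\vev$. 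Since $\vev_{\upini}$ is compactly supported, one can take $C>0$ so large (depending on $\operatorname{diam}\operatorname{supp}\vev_{\upini}$, on $\|\vev_{\upini}\|_\infty$, and on pointwise positive bounds for $\veu_{-\mu e'}$) that $\overline{\vew}_{\mu,e'}(0,\cdot)\geq\vev_{\upini}$ everywhere on $\R^n$; Proposition \ref{prop:comparison_principle} then gives $\vev\leq\overline{\vew}_{\mu,e'}$ for all later times. In the moving frame $x=X+cte$ with $|X|\leq R$, the magnitude of the super-solution is of order $\upe^{(-\lambda_{1,-\mu e'}-\mu c\,e\cdot e')t}$, which vanishes as $t\to+\infty$ as soon as $c\,e\cdot e'>-\lambda_{1,-\mu e'}/\mu=c^\mu_{e'}$. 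Optimizing over $\mu>0$ (thanks to Lemma \ref{lem:minimal_c}) and then minimizing over directions $e'$ with $e\cdot e'>0$ (thanks to Lemma \ref{lem:minimal_e}) yields \eqref{eq:spreading_superestimate} for every $c>c^{\upFG}_e$.

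For \eqref{eq:spreading_subestimate}, I would invoke the Freidlin--G\"{a}rtner formula established by Du--Li--Shen \cite{Du_Li_Shen_2022} for cooperative space-time periodic semilinear systems of KPP type. The system $\cbQ\vev=-\diag(\vect{g})\vev^{\circ(1+q)}$ fits their framework: its linear part $\cbQ$ is cooperative and fully coupled by \ref{ass:cooperative}--\ref{ass:irreducible}; the reaction satisfies the KPP linear majorization $\veL\vev-\diag(\vect{g})\vev^{\circ(1+q)}\leq\veL\vev$, with equality at $\vev=\vez$; and the generalized principal eigenvalues $\lambda_{1,z}$ are those of $\cbQ$ itself, so the spreading speed produced by their theorem is exactly $c^{\upFG}_e$.

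The main obstacle is clearly the lower estimate, since the upper one reduces to comparison with explicit exponential super-solutions of the linear equation. If one wished to avoid citing \cite{Du_Li_Shen_2022}, one would need to follow the scheme of Berestycki--Hamel--Nadin \cite{Berestycki_Hamel_Nadin}: for each $e'$ with $e\cdot e'>0$ and each $c'<c^\star_{e'}$, construct a compactly supported sub-solution sliding at velocity $c'e'$ by combining a truncated exponential based on $\veu_{-\mu e'}$ with a Dirichlet principal eigenfunction on a large ball (exactly as in Section \ref{sec:proof_persistence_large_solutions} but now in a moving frame), and then cover the set $\{X+cte\,:\,|X|\leq R\}$ for $c<c^{\upFG}_e$ by translates of such sliding supports. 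This anisotropic covering argument, which accounts for the lack of rotational symmetry of the spatially-periodic medium, is what makes the lower estimate substantially harder than the upper one and is the technical heart of Berestycki--Hamel--Nadin's scalar proof; its adaptation here is purely geometric and inherits directly from the cooperative structure.
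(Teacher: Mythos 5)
Your super-estimate argument is sound and essentially the same computation the paper uses (exponential super-solutions $\upe^{-\lambda_{1,-\mu e'}t}\upe^{-\mu e'\cdot x}\veu_{-\mu e'}$, comparison via Proposition \ref{prop:comparison_principle}, then optimization in $\mu$ and $e'$ using Lemmas \ref{lem:minimal_c} and \ref{lem:minimal_e}). The sub-estimate, however, contains two genuine gaps. First, the system $\cbQ\vev=-\diag(\vect{g})\vev^{\circ(1+q)}$ does \emph{not} directly fit the Du--Li--Shen framework: their hypotheses (\textbf{(A1)}--\textbf{(A6)}) are formulated relative to a positive, globally attractive equilibrium, whereas here the natural ``upper state'' is a space-time periodic entire solution whose existence and uniform positivity must first be established (by rerunning the bifurcation argument of Theorem \ref{thm:existence_persistent_entire_solution} and the hair-trigger argument of Theorem \ref{thm:hair_trigger_effect} for this modified nonlinearity). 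One must then normalize it to $\veo$ through the change of unknown $\vev=\widetilde{\vev}\circ\vev^\star$, which produces a transformed operator $\widetilde{\cbQ}$ with modified drifts and a modified coupling matrix, and one must still verify that $\lambda_1(\widetilde{\cbQ})<0$ and that $\veo$ attracts uniformly positive space-periodic data in $[\vez,\veo]$. None of this is free, and your proposal asserts applicability of \cite{Du_Li_Shen_2022} without any of these steps.

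Second, even once Du--Li--Shen applies, their theorems only give abstract directional speeds $c_{\inf}(e')\leq c_{\sup}(e')$ for Heaviside-like data and a Freidlin--G\"{a}rtner reduction for compactly supported data; they do \emph{not} assert linear determinacy, i.e.\ they do not tell you that these speeds equal the eigenvalue-based quantity $c^\star_{e'}$. Your claim that ``the spreading speed produced by their theorem is exactly $c^{\upFG}_e$'' because the linearization is $\cbQ$ is precisely the point that has to be proved: $c_{\sup}(e')\leq c^\star_{e'}$ by the exponential super-solutions, and $c_{\inf}(e')\geq c^\star_{e'}$ by the Berestycki--Hamel--Nadin construction (approximation in cylinders aligned with lattice directions, principal eigenfunctions there, then continuation to all directions), which in the vector setting additionally requires adapting the spectral approximation of \cite{Girardin_Mazari_2022} to cylinders and handling the fact that different components of the oscillating eigenfunction vanish at different places, via the generalized sub-solutions of Proposition \ref{prop:generalized_comparison_principle}. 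In your write-up this construction is presented as an optional alternative to citing \cite{Du_Li_Shen_2022}; in fact, on the paper's route it is needed \emph{in addition} to \cite{Du_Li_Shen_2022}, since the citation supplies the passage from Heaviside-like to compactly supported data but not the identification of the speed. (A purely self-contained sliding sub-solution plus covering argument, as you sketch, might also work, but as stated it is far too vague to count as a proof of the anisotropic lower bound with compactly supported data.)
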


\begin{proof}
    Let 
    \begin{equation*}
	\kappa\geq\max\left(\left(\frac{-\lambda_1'}{\displaystyle\min_{(i,t,x)\in[N]\times\R\times\R^n}g_i(t,x)u_{0,i}(t,x)^q}\right)^{1/q},\max_{(i,x)\in[N]\times\R^n}v_{\upini,i}(x)\right)>0.
    \end{equation*}
    Then the function $\kappa\veu_0$ satisfies straightforwardly
    \begin{equation*}
	\begin{cases}
	    \cbQ(\kappa\veu_0)+\diag(\vect{g})(\kappa\veu_0)^{\circ(1+q)}\geq\vez, \\
	    \kappa\veu_0(0,\cdot)\geq\vev_{\upini},
	\end{cases}
    \end{equation*}
    and, by virtue of the comparison principle of Proposition \ref{prop:comparison_principle},
    $\kappa\veu_0\geq\vev$ in $[0,+\infty)\times\R^n$. 
    Hence $\vev$ is globally bounded, in a way that only depends on 
    $\max_{(i,x)\in[N]\times\R^n}v_{\upini,i}(x)$. Furthermore, a similar application
    of the comparison principle and a classical minimization of the parameter $\kappa$ show that
    space-time periodic solutions of $\cbQ\vev=-\diag(\vect{g})\vev^{\circ(1+q)}$ are 
    \textit{a priori} uniformly globally bounded, in a way reminiscent of Corollary 
    \ref{cor:global_bound_entire}.
    Then, by arguments very similar to those proving Theorem \ref{thm:existence_persistent_entire_solution}, there exists
    a nonnegative nonzero space-time periodic entire solution $\vev^\star$. 
    Similarly, the proof of Theorem \ref{thm:hair_trigger_effect} can be readily adapted to show that all solutions $\vev$
    of the Cauchy problem persist locally uniformly, namely they satisfy \eqref{eq:Cauchy_persistence}. In particular,
    $\vev^\star\gg\vez$ in $\R\times\R^n$.

    Now, in order to be in a position to apply \cite[Theorem 2.1]{Du_Li_Shen_2022}, 
    we transform the unknown $\vev$ in such a way that the structure of the problem is preserved but 
    the space-time periodic entire solution $\vev^\star$ is replaced by
    the space-time homogeneous steady state $\veo$. To do so, we set $\widetilde{\vev}$ so that $\vev=\widetilde{\vev}\circ\vev^\star$. 
    \red{Then, for each $i\in[N]$, standard calculus and the symmetry of $A_i$ (\textit{cf.} \ref{ass:smooth_periodic}) lead to
    \begin{equation*}
	\nabla\cdot(A_i\nabla v_i) 
	= \widetilde{v}_i\nabla\cdot(A_i\nabla v^\star_i) + v^\star_i\nabla\cdot(A_i\nabla\widetilde{v}_i)
	+2A_i\nabla v^\star_i\cdot\nabla \widetilde{v}_i
    \end{equation*}
    so that
    \begin{equation*}
	\caP_i v_i = \widetilde{v}_i \caP_i v^\star_i + v^\star_i \caP_i\widetilde{v}_i -2A_i \nabla v^\star_i\cdot \nabla \widetilde{v}_i.
    \end{equation*}
    Since $v^\star_i$ is positive pointwise, the terms can be rearranged in the following way:
    \begin{equation*}
	\caP_i\widetilde{v}_i-2\frac{A_i\nabla v^\star_i}{v^\star_i}\cdot\nabla\widetilde{v}_i = -\frac{\caP_i v^\star_i}{v^\star_i}\widetilde{v}_i+\frac{1}{v^\star_i}\caP_i v_i.
    \end{equation*}
    By using the equalities 
    \begin{equation*}
	\dcbP\vev^\star=\veL\vev^\star-\diag(\vect{g})(\vev^\star)^{\circ(1+q)},
    \end{equation*}
    \begin{equation*}
	\dcbP\vev=\veL(\widetilde{\vev}\circ\vev^\star)-\diag(\vect{g})(\widetilde{\vev}\circ\vev^\star)^{\circ(1+q)},
    \end{equation*}
    we deduce that} each $\widetilde{v}_i$, $i\in[N]$, satisfies:
    \begin{equation*}
	\caP_i\widetilde{v}_i-2\frac{A_i\nabla v^\star_i}{v^\star_i}\cdot\nabla\widetilde{v}_i = -\left( \sum_{j\in[N]}l_{i,j}\frac{v^\star_j}{v^\star_i}-g_i (v^\star_i)^q \right)\widetilde{v}_i
	+\sum_{j\in[N]}l_{i,j}\frac{v^\star_j}{v^\star_i}\widetilde{v}_j-g_i (v^\star_i)^q(\widetilde{v}_i)^{1+q}
    \end{equation*}
    By setting 
    \begin{equation*}
	\widetilde{q}_i=q_i-2\frac{A_i\nabla v^\star_i}{v^\star_i},
    \end{equation*}
    \begin{equation*}
	\widetilde{g}_i=g_i (v^\star_i)^q,
    \end{equation*}
    \begin{equation*}
	\widetilde{\veL}=\left( l_{i,j}\frac{v^\star_j}{v^\star_i} \right)_{(i,j)\in[N]^2}+\diag\left( g_i (v^\star_i)^q-\sum_{j\in[N]}l_{i,j}\frac{v^\star_j}{v^\star_i} \right)_{i\in[N]},
    \end{equation*}
    \begin{equation*}
	\widetilde{\cbQ}=\diag(\partial_t-\nabla\cdot(A_i\nabla)+\widetilde{q}_i)-\widetilde{\veL},
    \end{equation*}
    we obtain a new system
    \begin{equation*}
	\widetilde{\cbQ}\widetilde{\vev}=-\diag(\widetilde{\vect{g}})\widetilde{\vev}^{\circ(1+q)}.
    \end{equation*}
    This new system has the same structure as the original system $\cbQ\vev=-\diag(\vect{g})\vev^{\circ(1+q)}$
    and admits $\veo$ as space-time homogeneous solution by construction. 
    In particular, let us emphasize that the
    matrix $\widetilde{\veL}$ satisfies the structural assumptions \ref{ass:cooperative}, \ref{ass:irreducible}.

    Let us verify that $\lambda_1(\widetilde{\cbQ})<0$. Assume by contradiction $\lambda_1(\widetilde{\cbQ})\geq 0$.
    Then the proof of Theorem \ref{thm:extinction_small_solutions} can be readily adapted to construct solutions $\widetilde{\vev}$
    that vanish locally uniformly. But this, in turn, implies the existence of solutions $\vev$ of the original system 
    $\cbQ\vev=-\diag(\vect{g})\vev^{\circ(1+q)}$ that vanish locally uniformly, contradicting the locally uniform persistence 
    of all solutions.

    Let us verify now that $\veo$ is globally attractive for solutions $\widetilde{\vev}$ whose initial values are uniformly positive, 
    space-periodic and valued in $[\vez,\veo]$. For any $T\geq 0$, let
    \begin{equation*}
	M_T=\min_{i\in[N]}\min_{x\in[0,L]}\widetilde{v}_i(T,x),
    \end{equation*}
    so that
    \begin{equation*}
	M_T\veo\leq\widetilde{\vev}(T,\cdot)\leq\veo\quad\text{in }\R^n. 
    \end{equation*}
    \red{Then $M_T\veo$ is a sub-solution of the cooperative semilinear system starting from $t=T$; indeed, 
    \begin{align*}
	\widetilde{\cbQ}(M_T\veo)+\diag(\widetilde{\vect{g}})(M_T\veo)^{\circ(1+q)} & = M_T\left(-\widetilde{\veL}\veo+M_T^q\diag(\widetilde{\vect{g}})\veo\right) \\
	& = M_T\left(-\diag(\widetilde{\vect{g}})\veo+M_T^q\diag(\widetilde{\vect{g}})\veo\right) \\
	& = M_T(M_T^q-1)\widetilde{\vect{g}} \\
	& \leq\vez
    \end{align*}
    where we have used the facts that $\veo$ is a solution of the system and that, necessarily, $M_T\leq 1$.}
    Similarly, $\veo$ is a global super-solution. Hence it suffices to prove $M_T\to 1$ as $T\to+\infty$. 
    By applying the strong comparison principle in a way similar to the proof of Theorem \ref{thm:extinction} (case $\lambda_1'=0$), 
    we deduce that $T\mapsto M_T$ is increasing. Hence it converges to a limit $M_\infty\in(0,1]$. If $M_\infty<1$, then by 
    a limiting argument again similar to that of the proof of Theorem \ref{thm:extinction}, we find a new space-periodic
    entire solution $\widetilde{\vev}_\infty$ valued in $[M_\infty\veo,\veo]$, and then by comparison with the sub-solution
    $M_\infty\veo$, a contradiction arises, just as in the proof of Theorem \ref{thm:extinction}.
    Hence $M_\infty=1$ and $\veo$ is indeed globally attractive for uniformly positive, space-periodic solutions in $[\vez,\veo]$.

    Therefore the transformed system $\widetilde{\cbQ}\widetilde{\vev}=-\diag(\widetilde{\vect{g}})\widetilde{\vev}^{\circ(1+q)}$ 
    satisfies the assumptions 
    \textbf{(A1)}--\textbf{(A6)} of \cite[Theorem 2.1]{Du_Li_Shen_2022}. Consequently, its solutions with planar Heaviside-like
    initial conditions in some direction $e'\in\mathbb{S}^{n-1}$, namely initial conditions in
    \begin{equation*}
	\vect{H}_{e'} = \left\{\widetilde{\vev}_{\upini}\in L^\infty(\R^n,[\vez,\veo])\ |\ \liminf_{x\cdot e'\to-\infty}\widetilde{\vev}_{\upini}\gg\vez,\ 
	\exists B\in\R\quad (\widetilde{\vev}_{\upini})_{|\{x\cdot e'\geq B\}}=\vez \right\},
    \end{equation*}
    spread at least at speed $c_{\inf}(e')$ and at most at speed $c_{\sup}(e')$, where
    \begin{equation*}
	c_{\inf}(e')=\sup\left\{c\geq 0\ |\ \widetilde{\vev}_{\upini}\in\vect{H}_{e'}\implies \lim_{t\to+\infty}\inf_{x\cdot e'\leq ct} \widetilde{\vev}(t,x)=\veo\right\},
    \end{equation*}
    \begin{equation*}
	c_{\sup}(e')=\inf\left\{c\geq 0\ |\ \widetilde{\vev}_{\upini}\in\vect{H}_{e'}\implies \lim_{t\to+\infty}\sup_{x\cdot e'\geq ct} \widetilde{\vev}(t,x)=\vez\right\}.
    \end{equation*}

    Let us prove now that
    \begin{equation*}
	c_{\inf}(e')=c_{\sup}(e')=c^\star_{e'}\quad\text{for any }e'\in\mathbb{S}^{n-1}.
    \end{equation*}
    More precisely, since $c_{\inf}(e')\leq c_{\sup}(e')$ is clear,
    we are going to prove $c_{\sup}(e')\leq c^\star_{e'}$ on one hand
    and $c_{\inf}(e')\geq c^\star_{e'}$ on the other hand.
    
    The inequality $c_{\sup}(e')\leq c^\star_{e'}$ follows from the super-solution
    \begin{equation*}
	\overline{\vev}:(t,x)\mapsto M\upe^{-\lambda_{1,z^\star}t}\upe^{z^\star\cdot x}\veu_{z^\star}(t,x),
    \end{equation*}
    where $z^\star=-\mu^\star_{e'}e'$ and with $M>0$ so large that $\overline{\vev}(0,\cdot)\geq\vev_{\upini}$. Here $\mu^\star_{e'}$
    denotes obviously the unique $\mu^\star$ associated with direction $e'$ given by Lemma \ref{lem:minimal_c}.
    In view of the invertible change of unknown $\vev=\widetilde{\vev}\circ\vev^\star$ and of the above calculations, 
    the inequality $\widetilde{\overline{\vev}}\geq\widetilde{\vev}$ is clear globally in $[0,+\infty)\times\R^n$.
    Moreover, by definition of $c^\star_{e'}$,
    \begin{equation*}
	\exp(-\lambda_{1,z^\star}t)\exp(-\mu^\star_{e'} x\cdot e')=\exp(-\mu^\star_{e'}\left( x\cdot e'-c^\star_{e'} t\right)),
    \end{equation*}
    whence the super-solution spreads exactly at speed $c^\star_{e'}$ and, in view of its exponential decay at $x\cdot e'=+\infty$,
    this implies $c_{\sup}(e')\leq c^\star_{e'}$.

    The inequality $c_{\inf}(e')\geq c^\star_{e'}$ follows similarly from the construction of a Heaviside-like sub-solution
    that spreads at some speed $c<c^\star_{e'}$. Since this construction is quite long -- density of directions $e'$ meeting the
    spatial periodicity network, approximation in straight cylinders in direction $e'$, existence of principal eigenfunctions 
    in such cylinders by complex analysis arguments, and finally continuation of the spreading speed estimate for directions 
    $e'$ that do not meet the spatial periodicity network -- but analogous to the scalar construction in 
    \cite[Sections 4.2, 4.3]{Berestycki_Hamel_Nadin}, it is not detailed here. Let us just mention two specificities of 
    the vector setting worthy of attention: 
    \begin{itemize}
	\item The spectral approximation in cylinders of increasing radius, stated in \cite[Proposition 4.4]{Berestycki_Hamel_Nadin},
	    is proved thanks to ratios of scalar quantities whose vector generalization seems \textit{a priori} unclear. 
	    It can however be noticed that this proof is analogous to that of the spectral approximation in balls of increasing
	    radius, that we already generalized to the vector setting in \cite[Propositions 3.2, 3.9]{Girardin_Mazari_2022}. 
	    Adapting the proof of \cite[Propositions 3.2, 3.9]{Girardin_Mazari_2022} to cylinders is straightforward.
	\item Different components of the oscillating eigenfunction might vanish at different locations, so that by taking the 
	    positive part we do not obtain in general the solution of a linear Dirichlet problem as in
	    \cite[Section 4.3]{Berestycki_Hamel_Nadin}. Instead, by \red{Proposition \ref{prop:generalized_comparison_principle},
	    we obtain a generalized sub-solution of this linear Dirichlet problem. Since this part of the proof consists precisely
	    in constructing a sub-solution, this is sufficient and the proof can be kept unchanged.}
    \end{itemize}
    
    Consequently,
    \begin{equation*}
	c_{\inf}(e')=c_{\sup}(e')=c^\star_{e'}\quad\text{for all }e'\in\mathbb{S}^{n-1}.
    \end{equation*}

    We are now in a position to apply \cite[Theorem 2.3]{Du_Li_Shen_2022} and obtain the Freidlin--G\"{a}rtner formula
    for the spreading speed $c^{\upFG}_e$ in the direction $e$ of solutions of the cooperative system 
    $\widetilde{\cbQ}\widetilde{\vev}=-\diag(\widetilde{\vect{g}})\widetilde{\vev}^{\circ(1+q)}$ with compactly supported 
    initial conditions. 
    Going back to
    the original unknown $\vev$, we deduce as claimed the Freidlin--G\"{a}rtner formula for the spreading speed in the
    direction $e$ of solutions of the cooperative system $\cbQ\vev=-\diag(\vect{g})\vev^{\circ(1+q)}$ with compactly
    supported initial conditions.
\end{proof}

\begin{section}{Acknowledgements}
    L. G. acknowledges support from the ANR via the project Indyana under grant agreement ANR-21-CE40-0008
    and via the project Reach under grant agreement ANR-23-CE40-0023-01. 
    He thanks Christopher Henderson for fruitful discussions on Harnack inequalities and Gaussian estimates.
\end{section}

\bibliographystyle{plain}
\bibliography{ref}

\begin{thebibliography}{10}

\bibitem{Alfaro_Griette}
Matthieu Alfaro and Quentin Griette.
\newblock Pulsating fronts for {F}isher-{KPP} systems with mutations as models
  in evolutionary epidemiology.
\newblock {\em Nonlinear Anal. Real World Appl.}, 42:255--289, 2018.

\bibitem{Araposthathis_}
Ari Arapostathis, Mrinal~K. Ghosh, and Steven~I. Marcus.
\newblock Harnack{'}s inequality for cooperative weakly coupled elliptic
  systems.
\newblock {\em Comm. Partial Differential Equations}, 24(9-10):1555--1571,
  1999.

\bibitem{Aronson_1968}
D.~G. Aronson.
\newblock Non-negative solutions of linear parabolic equations.
\newblock {\em Ann. Scuola Norm. Sup. Pisa Cl. Sci. (3)}, 22:607--694, 1968.

\bibitem{Auscher_Egert_2022}
Pascal {Auscher} and Moritz {Egert}.
\newblock A universal variational framework for parabolic equations and
  systems.
\newblock {\em Calc. Var. Partial Differential Equations}, 62(9):Paper No. 249,
  59, 2023.

\bibitem{Bai_He_2020}
Xueli Bai and Xiaoqing He.
\newblock Asymptotic behavior of the principal eigenvalue for cooperative
  periodic-parabolic systems and applications.
\newblock {\em J. Differential Equations}, 269(11):9868--9903, 2020.

\bibitem{Barles_Evans_S}
Guy Barles, Lawrence~C. Evans, and Panagiotis~E. Souganidis.
\newblock Wavefront propagation for reaction-diffusion systems of {PDE}.
\newblock {\em Duke Math. J.}, 61(3):835--858, 1990.

\bibitem{Berestycki_Hamel_Nadin}
Henri Berestycki, Fran{\c{c}}ois Hamel, and Gr{\'{e}}goire Nadin.
\newblock Asymptotic spreading in heterogeneous diffusive excitable media.
\newblock {\em J. Funct. Anal.}, 255(9):2146--2189, 2008.

\bibitem{Bouin_Henderso-1}
Emeric Bouin, Christopher Henderson, and Lenya Ryzhik.
\newblock The {B}ramson logarithmic delay in the cane toads equations.
\newblock {\em Quart. Appl. Math.}, 75(4):599--634, 2017.

\bibitem{Cantrell_Cosner_Yu_2018}
Robert~Stephen Cantrell, Chris Cosner, and Xiao Yu.
\newblock Dynamics of populations with individual variation in dispersal on
  bounded domains.
\newblock {\em Journal of Biological Dynamics}, 12(1):288--317, 2018.

\bibitem{Dong_Kim_2018}
Hongjie Dong and Seick Kim.
\newblock Fundamental solutions for second-order parabolic systems with drift
  terms.
\newblock {\em Proc. Amer. Math. Soc.}, 146(7):3019--3029, 2018.

\bibitem{Du_Li_Shen_2022}
Li-Jun Du, Wan-Tong Li, and Wenxian Shen.
\newblock Propagation phenomena for time-space periodic monotone semiflows and
  applications to cooperative systems in multi-dimensional media.
\newblock {\em J. Funct. Anal.}, 282(9):Paper No. 109415, 59, 2022.

\bibitem{Elliott_Cornel}
Elizabeth~C. Elliott and Stephen~J. Cornell.
\newblock Dispersal polymorphism and the speed of biological invasions.
\newblock {\em PLOS ONE}, 7(7):1--10, 07 2012.

\bibitem{Foldes_Polacik_2009}
Juraj F{\"{o}}ldes and Peter Pol{\'{a}}{\v{c}}ik.
\newblock On cooperative parabolic systems: Harnack inequalities and asymptotic
  symmetry.
\newblock {\em Discrete Contin. Dyn. Syst.}, 25(1):133--157, 2009.

\bibitem{Friedman_1983}
Avner Friedman.
\newblock {\em Partial differential equations of parabolic type}.
\newblock Courier Corporation, 2013.

\bibitem{Gartner_Freidlin}
J\"{u}rgen G\"{a}rtner and Mark~I. Freidlin.
\newblock The propagation of concentration waves in periodic and random media.
\newblock {\em Dokl. Akad. Nauk SSSR}, 249(3):521--525, 1979.

\bibitem{Girardin_2017}
L\'{e}o Girardin.
\newblock Non-cooperative {Fisher{--}KPP} systems: Asymptotic behavior of
  traveling waves.
\newblock {\em Mathematical Models and Methods in Applied Sciences},
  28(06):1067--1104, 2018.

\bibitem{Girardin_2016_2}
L\'{e}o Girardin.
\newblock Non-cooperative {Fisher{--}KPP} systems: traveling waves and
  long-time behavior.
\newblock {\em Nonlinearity}, 31(1):108, 2018.

\bibitem{Girardin_2016_2_add}
L\'{e}o Girardin.
\newblock Addendum to {`}{N}on-cooperative {Fisher{--}KPP} systems: traveling
  waves and long-time behavior{'}.
\newblock {\em Nonlinearity}, 32(1):168, 2019.

\bibitem{Girardin_2018}
L\'{e}o Girardin.
\newblock Two components is too simple: an example of oscillatory {Fisher--KPP}
  system with three components.
\newblock {\em Proceedings of the Royal Society of Edinburgh: Section A
  Mathematics}, pages 1--24, 2019.

\bibitem{Girardin_Griette_2020}
L{\'e}o Girardin and Quentin Griette.
\newblock A {L}iouville-type result for non-cooperative {Fisher--KPP} systems
  and nonlocal equations in cylinders.
\newblock {\em Acta Applicandae Mathematicae}, 170(1):123--139, Dec 2020.

\bibitem{Girardin_Mazari_2022}
L{\'e}o {Girardin} and Idriss {Mazari}.
\newblock Generalized principal eigenvalues of space-time periodic, weakly
  coupled, cooperative, parabolic systems.
\newblock {\em Memoirs of the French Mathematical Society}, accepted for
  publication in 2024.

\bibitem{Griette_2017}
Quentin Griette.
\newblock Singular measure traveling waves in an epidemiological model with
  continuous phenotypes.
\newblock {\em Trans. Amer. Math. Soc.}, 371(6):4411--4458, 2019.

\bibitem{Griette_Matano_2021}
Quentin Griette and Hiroshi Matano.
\newblock Propagation dynamics of solutions to spatially periodic
  reaction-diffusion systems with hybrid nonlinearity.
\newblock {\em arXiv e-prints}, page arXiv:2108.10862, August 2021.

\bibitem{Griette_Raoul}
Quentin Griette and Ga{\"{e}}l Raoul.
\newblock Existence and qualitative properties of travelling waves for an
  epidemiological model with mutations.
\newblock {\em J. Differential Equations}, 260(10):7115--7151, 2016.

\bibitem{Hamel_Nolen_Ro}
Fran{\c{c}}ois Hamel, James Nolen, Jean-Michel Roquejoffre, and Lenya Ryzhik.
\newblock A short proof of the logarithmic {B}ramson correction in {Fisher-KPP}
  equations.
\newblock {\em Netw. Heterog. Media}, 8(1):275--289, 2013.

\bibitem{Lieberman_2005}
Gary~M. Lieberman.
\newblock {\em Second order parabolic differential equations}.
\newblock World Scientific Publishing Co., Inc., River Edge, NJ, 1996.

\bibitem{Morris_Borger_Crooks}
Aled Morris, Luca B{\"{o}}rger, and Elaine C.~M. Crooks.
\newblock Individual variability in dispersal and invasion speed.
\newblock {\em Mathematics}, 7(9), 2019.

\bibitem{Nadin_2009}
Gr\'{e}goire Nadin.
\newblock Traveling fronts in space-time periodic media.
\newblock {\em J. Math. Pures Appl. (9)}, 92(3):232--262, 2009.

\bibitem{Nadin_2010}
Gr{\'{e}}goire Nadin.
\newblock Existence and uniqueness of the solution of a space-time periodic
  reaction-diffusion equation.
\newblock {\em J. Differential Equations}, 249(6):1288--1304, 2010.

\bibitem{Nadin_2011}
Gr{\'{e}}goire Nadin.
\newblock Some dependence results between the spreading speed and the
  coefficients of the space-time periodic {F}isher-{KPP} equation.
\newblock {\em European J. Appl. Math.}, 22(2):169--185, 2011.

\bibitem{Protter_Weinberger}
Murray~H. Protter and Hans~F. Weinberger.
\newblock {\em Maximum principles in differential equations}.
\newblock Springer-Verlag, New York, 1984.
\newblock Corrected reprint of the 1967 original.

\bibitem{Roques_Boutillon_Zamberletti_Papaix_2023}
Lionel Roques, Nathana{\"e}l Boutillon, Patrizia Zamberletti, and Julien
  Papa{\"i}x.
\newblock {Polymorphic population expansion velocity in a heterogeneous
  environment}.
\newblock {\em J. Theoret. Biol.}, 595:Paper No. 111932, 2024.

\bibitem{Weinberger_1982}
H.~F. Weinberger.
\newblock Long-time behavior of a class of biological models.
\newblock {\em SIAM J. Math. Anal.}, 13(3):353--396, 1982.

\end{thebibliography}

\end{document}